\DeclareSymbolFont{rsfs}{OMS}{rsfs}{m}{n}
\DeclareSymbolFontAlphabet{\mathscr}{rsfs}
\renewcommand{\mathcal}{\mathscr}
\theoremstyle{remark}
\newtheorem{example}{Example}[section]
\newtheorem{rem}[example]{Remark}
\theoremstyle{definition}
\newtheorem{defn}[example]{Definition}
\theoremstyle{plain}
\newtheorem{prop}[example]{Proposition}
\newtheorem{lemma}[example]{Lemma}
\newtheorem{cor}[example]{Corollary}
\newtheorem{thm}[example]{Theorem}
\newtheorem*{claim*}{Claim}
\DeclareMathOperator{\im}{im}
\DeclareMathOperator{\id}{id}
\DeclareMathOperator{\rank}{rank}
\DeclareMathOperator{\Mat}{Mat}
\DeclareMathOperator{\GL}{GL}
\DeclareMathOperator{\SL}{SL}
\DeclareMathOperator{\Sp}{Sp}
\DeclareMathOperator{\Gr}{Gr}
\DeclareMathOperator{\Orth}{O}
\DeclareMathOperator{\End}{End}
\DeclareMathOperator{\Sym}{Sym}
\DeclareMathOperator{\diag}{diag}
\newcommand{\bbA}{\mathbb{A}} 
\newcommand{\bbZ}{\mathbb{Z}}
\newcommand{\longto}[1][]{\stackrel{#1}{\longrightarrow}}
\newcommand{\bbC}{\mathbb{C}}
\newcommand{\bbP}{\mathbb{P}}
\newcommand{\Gm}{\mathbb{G}_m}
\newcommand {\CC}{\mathbb{C}}
\newcommand{\calO}{\mathcal{O}}
\newcommand{\calL}{\mathcal{L}}
\newcommand{\calV}{\mathcal{V}}
\newcommand{\calU}{\mathcal{U}}
\newcommand{\sym}{\mathrm{sym}}
\renewcommand{\phi}{\varphi}
\newcommand{\catqot}{/\hskip-3pt/}
\newcommand{\eps}{\varepsilon}
\newcommand{\lma}{\longmapsto}
\newcommand{\lra}{\longrightarrow}
\newcommand{\ol}{\overline}
\newcommand{\q}{\quad}
\newcommand{\C}{\mathbb C}
\begin{document}
\title[Rational families of instanton bundles on $\boldsymbol{\mathbb P}^{2n+1}$]
{Rational families of instanton bundles on $\boldsymbol{\mathbb P}^{2n+1}$}

\author{L.\ Costa}
\email{costa@ub.edu}
\address{Facultat de Matem\`atiques,
Departament d'Algebra i Geometria, Gran Via de les Corts Catalanes
585, 08007 Barcelona, Spain} 

\author{N.\ Hoffmann}
\email{norbert.hoffmann@mic.ul.ie}
\address{Department of Mathematics and Computer Studies, Mary Immaculate College, South Circular Road,
Limerick, Ireland}

\author{R.M.\ Mir\'o-Roig}
\email{miro@ub.edu}
\address{Facultat de Matem\`atiques,
Departament d'Algebra i Geometria, Gran Via de les Corts Catalanes
585, 08007 Barcelona, Spain} 

\author{A.\ Schmitt}
 \email{alexander.schmitt@fu-berlin.de}
\address{Institut f\"{u}r Mathematik, Freie Universit\"{a}t Berlin, Arnimallee
3, 14195 Ber\-lin, Germany}

\thanks{The first and third author were partially supported by MTMM2010-15256, and the second and fourth 
author by SFB 647 ``Space-Time-Matter'', project C3 ``Algebraic Geometry: Deformations, Moduli and Vector Bundles''.
The final version of this paper was prepared during the last author's visit to the CRM, Bellaterra. A.\ Schmitt
would like to thank that institute for its hospitality and financial support.}

\subjclass{Primary 14D21; Secondary 14D20, 14J60}

\keywords{Instanton bundle, vector bundles on projective space, moduli space, reducibility, rationality}

\date{\today}
%%%%%%%%%%%%%%%%%%%%%%%%%%%%%%%%

\begin{abstract}
This paper is devoted to the theory of symplectic instanton bundles on an odd dimensional projective space ${\mathbb P}^{2n+1}$ with $n\ge 2$. We study the 
't Hooft instanton bundles introduced by Ottaviani and a new family of instanton bundles which generalizes one introduced on ${\mathbb P}^3$ independently 
by Rao and Skiti. The main result is the determination of the birational types of the moduli spaces of 't Hooft and of Rao--Skiti instanton bundles,
respectively. Assuming a conjecture of Ottaviani, we show that the moduli space of all symplectic instanton bundles on ${\mathbb P}^{2n+1}$ with $n\ge 2$ is 
reducible.
\end{abstract}

%%%%%%%%%%%%%%%%%%%%%%%%%%%%%%%

\setcounter{tocdepth}{1}

\maketitle

%\tableofcontents

%%%
\section{Introduction}
\label{intro}
%%%
The notion of an instanton comes from mathematical physics. It denotes a solution of an equation of motion in classical field theory which describes 
a particle localized both in space and in time.  Mathematically, it is a self-dual connection on a principal bundle on a four-dimensional
Riemannian manifold. The Penrose--Ward correspondence identifies instantons on the four-dimensional sphere $S^4$ with holomorphic instanton bundles on the
three-dimensional complex projective space $\mathbb{P}^3$ (\cite{At}, \cite{AtW}). This construction turned out to be a major motivation for studying vector
bundles on complex projective spaces and similar algebraic varieties. In recent preprints, Jardim, Markushevich, Tikhomirov, and Verbitsky managed
to settle fundamental open questions on geometric properties of moduli spaces of instanton bundles on $\mathbb{P}^3$, namely smoothness, connectedness and 
rationality (\cite{JV}, \cite{MT}, \cite{T}).
\par 
The Penrose--Ward correspondence exists also on higher odd-dimensional complex projective spaces. In fact, Salamon, Corrigan, Goddard, and
Kent introduced a correspondence between self-dual connections on ${\rm Sp}_n(\C)$-bundles on the $n$-dimensional quaternionic projective space
$\mathbb{H}{\rm P}^n$ and symplectic instanton bundles on the complex projective space $\mathbb{P}^{2n+1}$, $n\ge 1$ (\cite{CS}, \cite{CGK},
\cite{Sala}). For $n\ge 2$, the knowledge about moduli spaces of symplectic instanton bundles on $\mathbb{P}^{2n+1}$ is much less complete, and it
is the aim of the present paper to make some progress in this area.
\par 
The starting point is the paper by Ottaviani \cite{ottaviani}. Ottaviani introduces the notion of symplectic 't Hooft instanton bundles on 
$\mathbb{P}^{2n+1}$, $n\ge 2$, and claims without proof that the closure of the locus of symplectic 't Hooft bundles is an irreducible component of the moduli 
space of symplectic instanton bundles. So far, we have not been able to prove this claim, but we have checked it for several values of $n$ and the instanton
number $k$ (Remark \ref{ex:Otta}). In addition, we construct an irreducible moduli space for  symplectic 't Hooft bundles. Our first main result is that this 
moduli space is stably rational or even rational for many values of $n$ and $k$ (Corollary \ref{cor:ModRat}).
\par 
Next, we define the notion of a symplectic Rao--Skiti (RS) instanton bundle on $\mathbb{P}^{2n+1}$, $n\ge 1$. Symplectic Rao--Skiti instanton bundles generalize 
bundles studied by Rao \cite{rao} and Skiti \cite{skiti} on $\mathbb{P}^3$. To our knowledge, these instanton bundles haven't been investigated for $n\ge 2$, 
so far. We supply irreducible moduli spaces for symplectic Rao--Skiti instanton bundles and prove their rationality (Corollary \ref{cor:RSInstRat}). Another
important observation is that there are symplectic Rao--Skiti instanton bundles which are not limits of 't Hooft instanton bundles (Example \ref{ex:RSNonTH}). 
Under the assumption of the claim of Ottaviani (which we have verified in some cases), this implies that the moduli space of symplectic instanton bundles is 
reducible.
\par
Next, we outline the structure of the paper. In Section 2, we fix notation and briefly recall the definition and basic properties of instanton bundles on 
projective spaces needed later on. In Section 3, we present the definition and main features of the two irreducible families of symplectic instanton bundles on 
$\bbP^{2n+1}$ that we are going to be interested in, namely, symplectic 't Hooft- and symplectic RS-instanton bundles (see Definition \ref{deftHooft} and 
\ref{defRSinstanton}, respectively). In this section we also  verify a central claim by Ottaviani concerning the deformation behavior  of 't Hooft instanton 
bundles with the help of a computer (Remark \ref{ex:Otta}). Section 4 deals with the construction (Proposition \ref{prop:ModHooft}) and the rationality 
(Corollary \ref{cor:ModRat}) of moduli spaces of 't Hooft instanton bundles. Section 5 contains the determination of the birational type of the moduli stacks 
(Corollary \ref{cor:BirRSStack}) and spaces (Corollary \ref{cor:RSInstRat}) of RS-instanton bundles.
%%%
\subsection*{Notation}
%%%
Throughout this paper, we will work over the field $\CC$ of complex numbers. Given a vector space $W$, we will denote by $\bbP(W)$ the projective space of 
lines in $W$ and set $\bbP := \bbP^{2n+1} :=\bbP (\CC^{2n+2})$. We will not distinguish between a vector bundle and its locally free sheaf of sections and use 
the definition of $\mu$-(semi)stability due to Mumford and Takemoto \cite{OSS}. Given an integer $k\ge 1$, let 
$J = \left( \begin{smallmatrix}0 & I\\-I & 0 \end{smallmatrix}\right)$ denote the standard symplectic form on $\CC^{2n+2k}$.
%%%
\section{Mathematical instanton bundles}
%%%
In this section, we recall the definition of (mathematical) instanton bundles and their description in terms of monads.
%%%
\begin{defn} 
Let $k\ge 1$ be an integer. An {\it instanton bundle with charge $k$} (for short, a {\it $k$-instanton bundle}) is a vector bundle $E$ on $\bbP$ satisfying
the following properties:
\par
i) $E$ has rank $2n$,
\par
ii) the Chern polynomial of $E$ is $c_t(E)=1/(1-t^2)^k$,
\par
iii) $E$ has natural cohomology in the range $-(2n+1)\le q \le 0$, i.e., for any $q$ in that range, there is at most one integer $i=i(q)$ such that 
$H^{i}(\bbP,E(q))\ne 0$.
\par
iv) $E$ has trivial splitting type, i.e., the restriction of $E$ to a general line is trivial.
\end{defn}
%%%
\begin{rem}
\label{rem:InstSimple}
By \cite{AO}, Proposition 2.11, any instanton bundle is simple. Nevertheless, it is an open question to determine whether it is $\mu$-stable.
\end{rem}
%%%
\begin{defn}
A vector bundle $E$ on $\bbP$ is called {\em symplectic}, if there exists an isomorphism $\phi\colon E\longrightarrow E^*$ such that $\phi ^*=-\phi$. This is 
equivalent to the existence of a non-degenerate form $\alpha \in H^0(\bbP,\bigwedge\limits^2E^*)$.
\end{defn}
%%%
For $n=1$, $E$ is a rank 2 vector bundle with $c_1(E)=0$. Thus, $\bigwedge\limits^2E= \calO_{\bbP}$ and $H^0(\bigwedge\limits^2E)$ admits a nowhere vanishing 
global section. However, when $n\ge 2$, the symplectic structure condition does not follow from the definition. In this paper, we will restrict our attention 
mainly to symplectic instanton bundles. Using the Beilinson spectral sequence \cite{Be}, we get the following well-known and useful correspondence  between 
symplectic instanton bundles and self-dual monads:
%%%
\begin{prop}
\label{prop:Monadology}
{\rm i)} Any symplectic $k$-instanton bundle $E$ over $\bbP$ is the cohomology of a symplectic monad
\begin{equation}
\label{eq:monad}
\begin{CD}
  \calO_{\bbP}( -1)^{\oplus k} @> JA^t >> \calO_{\bbP}^{2n+2k} @> A >> \calO_{\bbP}(
1)^{\oplus k}
\end{CD}
\end{equation}
where the matrix
\[
  A \in \Mat_{k \times (2n+2k)} \Big( H^0\bigl(\bbP, \calO_{\bbP}(1)\bigr) \Big)
\]
of linear forms has full rank $k$ at every point of $\bbP$ and satisfies $AJA^t = 0$.

Conversely, any such matrix $A$ yields a symplectic monad \eqref{eq:monad},
whose cohomology $E$ is a symplectic $k$-instanton bundle if $E$ has trivial splitting type.
\par
{\rm ii)} Abbreviate a monad with cohomology bundle $E$ as in \eqref{eq:monad} by $M^\bullet_E$. Then, for two such monads $M^\bullet_E$ and $N^\bullet_F$, one has
\[
 {\rm Hom}_{\calO_{\mathbb{P}}}(E,F)={\rm Hom}_{{\rm Kom}(\mathbb{P})}(M^\bullet_E, N^\bullet_F)
\]
where ${\rm Hom}_{{\rm Kom}(\mathbb{P})}$ means morphisms in the category of complexes ${\rm Kom}(\mathbb{P})$.
\end{prop}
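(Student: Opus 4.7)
For part (i), the plan is to apply the Beilinson spectral sequence in its form
\[
E_1^{p,q} = H^q\bigl(\bbP,\, E\otimes \Omega^{-p}(-p)\bigr)\otimes \calO_{\bbP}(p), \q -2n-1\le p\le 0,
\]
converging to $E$ in total degree $0$ and to zero otherwise. The first task is to identify which $E_1^{p,q}$ can be nonzero. Breaking up each $\Omega^j(j)$ via its Koszul resolution expresses $H^q(E\otimes \Omega^{-p}(-p))$ in terms of the groups $H^q(\bbP,E(r))$ for $-2n-1\le r\le 0$, and the natural cohomology hypothesis ensures that in this range at most one cohomology degree is nontrivial for each twist. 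Next I would compute $\chi(E(r))$ from Hirzebruch--Riemann--Roch using $c_t(E)=1/(1-t^2)^k$ and $\rank(E)=2n$; combined with Serre duality and the isomorphism $E\cong E^*$ coming from the symplectic form, this pins down both the degree in which cohomology can live and its dimension. The outcome is that the only surviving $E_1^{p,q}$ lie on the antidiagonal $p+q=0$ and concentrate on three spots, yielding a three-term monad of precisely the shape $\calO_{\bbP}(-1)^{\oplus k}\to \calO_{\bbP}^{2n+2k}\to \calO_{\bbP}(1)^{\oplus k}$.

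Still in part (i), to obtain the symplectic shape I would invoke part (ii) (whose proof is purely formal, see below) to deduce that the monad depends functorially on $E$. Dualizing the monad and applying the isomorphism $\phi\colon E\stackrel{\sim}{\to} E^*$ with $\phi^*=-\phi$ then produces a canonical identification between the monad and its dual; unwinding the signs shows that the first differential is necessarily of the form $JA^t$ with $A$ the matrix of the second differential, and that the vanishing of the composition translates to $AJA^t=0$. Full rank $k$ of $A$ at every point is equivalent to surjectivity of the second differential as a bundle map, which is built into the definition of a monad. For the converse, given $A$ of maximal rank with $AJA^t=0$, the three-term complex is automatically a monad; a standard computation with the twists of $\calO_{\bbP}$ confirms that its cohomology has rank $2n$, Chern polynomial $1/(1-t^2)^k$, and (by splitting the monad into two short exact sequences and chasing with Bott vanishing) natural cohomology in the prescribed range. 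The remaining condition, trivial splitting type, is exactly what is assumed.

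For part (ii), the plan is to split the monad $M^\bullet_E$ into the two short exact sequences
\[
0\to K\to \calO_{\bbP}^{2n+2k}\to \calO_{\bbP}(1)^{\oplus k}\to 0, \q 0\to \calO_{\bbP}(-1)^{\oplus k}\to K\to E\to 0,
\]
apply $\Hom(-,F)$, and show that the connecting $\Ext$-groups that obstruct lifting a bundle morphism $E\to F$ to a morphism of complexes vanish. Using the natural cohomology of $F$ (together with the isomorphism $F\cong F^*$ and Serre duality), one checks that $\Ext^i(\calO_{\bbP}(\pm 1),F)$ and $\Ext^i(\calO_{\bbP},F)$ vanish in the degrees needed to conclude both existence and uniqueness of the lift. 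Equivalently, this is a degeneration of the hypercohomology spectral sequence for $R\Hom(M^\bullet_E,F)$.

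The main obstacle will be the bookkeeping in part (i): one must carefully extract from natural cohomology and Riemann--Roch the precise dimensions $k$ and $2n+2k$ of the three nonvanishing Beilinson terms, and then argue that the differentials assembled from the spectral sequence really do form a monad (i.e.\ that the second differential is surjective and the first injective as bundle maps). The symplectic compatibility is then a clean consequence of part (ii) and self-duality, and part (ii) itself is a routine $\Ext$-vanishing argument.
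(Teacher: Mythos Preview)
Your proposal is correct and follows precisely the approach of the references \cite{OS}, \cite{OS2}, \cite{Hof} that the paper cites in lieu of a self-contained proof: Beilinson's spectral sequence together with the natural cohomology hypothesis and Riemann--Roch to isolate the three surviving terms in part (i), functoriality (your part (ii)) plus self-duality to pin down the symplectic shape $JA^t$, and an Ext-vanishing diagram chase for part (ii).

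One small refinement worth making in (ii): the vanishings you need come most cleanly from the monad structure of $N^\bullet_F$ itself rather than from natural cohomology of $F$ or the isomorphism $F\cong F^*$. For instance, the obstruction to lifting the composite $K_E\to E\to F$ through $K_F\twoheadrightarrow F$ lives in $\Ext^1(K_E,\calO_{\bbP}(-1)^{\oplus k})\cong H^1(\bbP,K_E^*(-1))^{\oplus k}$, and this vanishes because $K_E^*(-1)$ sits in a short exact sequence between $\calO_{\bbP}(-2)^{\oplus k}$ and $\calO_{\bbP}(-1)^{\oplus(2n+2k)}$; the uniqueness of the lift uses $\Hom(K_E,\calO_{\bbP}(-1))=0$ by the same sequence. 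Proceeding this way you never need to invoke the symplectic structure on $F$ (which the statement of (ii) does not actually use), and you also avoid the delicate point that $H^0(\bbP,F(1))$ is typically nonzero. This is exactly how \cite{OS2} and \cite{Hof} argue.
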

%%%
\begin{proof}
i) See \cite{OS}, Corollary 1.4 and Lemma 1.5, and \cite{Hof}, Proposition 2.4.
\par
ii) See \cite{OS2}, Proposition 1.3, or \cite{Hof}, Proposition 2.4.
\end{proof}
%%%
\begin{prop}
\label{boundsections}
Let $E$ be a $k$-instanton bundle on $\bbP$ and let $L\subset \bbP$ be a linear subspace of dimension $r$. Then, $ h^0(E_{|L}) \leq 2n+k-r$.
\end{prop}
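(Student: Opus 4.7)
The case $r=0$ is trivial, since $h^0(E|_L)=\rank E=2n\le 2n+k$. Assume $r\ge 1$. The strategy is to restrict the symplectic monad of Proposition \ref{prop:Monadology} to $L\cong\bbP^r$, translate the desired inequality into a lower bound on the rank of a linear map of finite-dimensional vector spaces, and then establish this rank bound via a classical dimension count involving the Segre variety of rank-one tensors.

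Set $K:=\ker(A\colon\calO_{\bbP}^{2n+2k}\to\calO_{\bbP}(1)^k)$. Because $A$ has full rank $k$ pointwise on $\bbP$ and hence on $L$, the display of monad \eqref{eq:monad} restricts to two short exact sequences on $L$:
\[
0\to\calO_L(-1)^k\to K|_L\to E|_L\to 0,\qquad 0\to K|_L\to\calO_L^{2n+2k}\to\calO_L(1)^k\to 0.
\]
Since $H^0(\calO_L(-1))=H^1(\calO_L(-1))=0$ on $\bbP^r$ for $r\ge 1$, the long exact sequence of the first gives $h^0(E|_L)=h^0(K|_L)$, and the long exact sequence of the second identifies $h^0(K|_L)$ with $\dim\ker(\tilde A_L)$, where
\[
\tilde A_L\colon\bbC^{2n+2k}\lra W^k,\qquad W:=H^0(L,\calO_L(1)),\quad\dim W=r+1,
\]
is the map on global sections induced by $A|_L$. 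Thus $h^0(E|_L)=2n+2k-\rank(\tilde A_L)$, and the proposition reduces to the assertion $\rank(\tilde A_L)\ge k+r$.

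To prove this rank inequality, let $M:=\im(\tilde A_L)\subset W^k$ and identify $L=\bbP(W^*)$. Pointwise surjectivity of $A|_L$ at $x=[\xi]\in L$ (with $\xi\in W^*\setminus\{0\}$) translates into surjectivity of the restricted map $\xi^k|_M\colon M\to\bbC^k$, where $\xi^k\colon W^k\to\bbC^k$ is induced by $\xi$ on each factor. Dualising, the annihilator $M^\perp\subset W^*\otimes(\bbC^k)^*$ contains no non-zero decomposable tensor $\xi\otimes\ell$; equivalently, under the identification $W^*\otimes(\bbC^k)^*\cong\Mat_{(r+1)\times k}(\bbC)$, the subspace $M^\perp$ meets the cone of rank-one matrices only at the origin.

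Finally, the Segre variety $\bbP^r\times\bbP^{k-1}\hookrightarrow\bbP^{(r+1)k-1}$ has projective dimension $r+k-1$, and by the previous step $\bbP(M^\perp)$ is disjoint from it. The standard dimension bound for disjoint subvarieties in $\bbP^{(r+1)k-1}$ then forces $\dim\bbP(M^\perp)+(r+k-1)\le(r+1)k-2$, i.e., $\dim M^\perp\le r(k-1)$. Consequently $\rank(\tilde A_L)=\dim M\ge(r+1)k-r(k-1)=k+r$, as required. The only conceptually non-routine step is the reformulation of the fibrewise surjectivity of $A|_L$ as avoidance of the Segre cone by $M^\perp$; once this is recognised, the rank estimate follows immediately from classical projective dimension theory.
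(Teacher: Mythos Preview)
Your argument is correct. Both you and the paper start from the monad display restricted to $L$ and reduce to showing that the map on global sections $\tilde A_L\colon\bbC^{2n+2k}\to H^0(L,\calO_L(1))^k$ has rank at least $k+r$; the difference lies in how this rank bound is established. The paper argues by contradiction: assuming $\rank(\tilde A_L)\le k+r-1$, a column operation reduces $A|_L$ to a $k\times(k+r-1)$ block $A_1$ of linear forms on $\bbP^r$, and then the Eagon--Northcott height bound for determinantal ideals (cited from Bruns--Vetter) forces the maximal minors of $A_1$ to vanish somewhere on $L$, contradicting fibrewise surjectivity of $A$. Your route is instead to dualise fibrewise surjectivity into the statement that $M^\perp$ avoids the Segre cone of rank-one tensors, and then apply the elementary intersection inequality $\dim X+\dim Y\ge N\Rightarrow X\cap Y\neq\varnothing$ in $\bbP^N$. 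Your approach is self-contained and avoids the external commutative-algebra citation; the paper's is terser once the Bruns--Vetter result is granted. The two are closely related underneath---the height bound for determinantal ideals is itself typically proved by a dimension count of the same flavour---but the presentations are genuinely distinct.

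One small remark: you invoke Proposition~\ref{prop:Monadology}, which is stated for \emph{symplectic} instantons, whereas the proposition you are proving is for arbitrary $k$-instanton bundles. Your argument never uses the symplectic form $J$, only the existence of a Beilinson monad $\calO_{\bbP}(-1)^k\to\calO_{\bbP}^{2n+2k}\to\calO_{\bbP}(1)^k$ with fibrewise surjective $A$, which holds for any $k$-instanton bundle (see e.g.\ \cite{OS}). The paper's own proof commits the same notational slip, so this is harmless, but it is worth saying explicitly that only the monad structure, not the self-duality, is needed.
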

%%%
\begin{proof}
Assume that there is a linear subspace $L\subset \bbP$ of dimension $r$, such that $h^0(E_{|L}) \geq 2n+k-r+1$ and consider the monad
\[
\begin{CD}
\calO_{\bbP}( -1)^{\oplus k} @> JA^t>> \calO_{\bbP}^{2n+2k} @>A>> \calO_{\bbP}(1)^{\oplus k}
\end{CD}
\] 
associated with $E$. After changing basis, if necessary, we can assume that $\bbP$ has homogeneous coordinates $x_0, x_1,..., x_{2n+1}$, $L$ is given by 
$x_{r+1}=\cdots =x_{2n+1}=0$ and
\[
A _{|L}=(A_1  |  A_2) \in   \Mat_{k \times (2n+2k)} \Big(H^0\bigl(\bbP, \calO_{L}(1)\bigr)\Big)
\]
with $A_2=(0)  \in   \Mat_{k \times (2n+k-r+1)}( H^0(\bbP, \calO_{L}( 1)))$. By \cite{BV}, Theorem 2.1, the homogeneous ideal $I\subset \CC[x_0, x_1, ... ,x_r]$ 
that is generated by the maximal minors of $A_1 \in   \Mat_{k \times (k+r-1)}( H^0(\bbP, \calO_{L}( 1)))$ has height $\le r$. This implies that there exist closed points 
$x\in L$, such that ${\rm rank}(A(x))<k$,  contradicting  the fact that $A$ is the matrix which defines the symplectic monad associated with $E$.
\end{proof}
%%%
\begin{defn}
Given a symplectic instanton bundle $E$ on $\bbP$ and a linear subspace $L\subset \bbP$ of dimension $r$, we will say that $L$ is {\em unstable with maximal order of 
instability}, if $h^0(E_{|L}) = 2n+k-r$.
\end{defn}
%%%
\begin{rem}
The upper bound given in the above proposition is sharp. Indeed, we will see that it is attained by symplectic RS-instanton bundles (see Definition \ref{defRSinstanton}). 
More precisely, for any symplectic RS-instanton bundle $E$ on $\bbP$, we will prove the existence of an $n$-dimensional linear subspace $L\subset \bbP$ with maximal order 
of instability (Proposition \ref{boundsections-sharp}).
\end{rem}
%%%
\section{Symplectic 't Hooft- and RS-instanton bundles}
%%%
This section is devoted to the introduction of the two kinds of instanton bundles that will be the subject of our research, namely, symplectic 't Hooft- and symplectic 
RS-instanton bundles on $\bbP$.
\par
Symplectic 't Hooft bundles on $\bbP^3$ were introduced by A.\ Hirschowitz and M.S.\ Narasimhan in \cite{HN}. They constructed them as the bundles associated via 
Serre's correspondence with disjoint unions of lines and, as a main achievement, proved their unobstructedness. The notion of symplectic 't Hooft bundles on higher 
odd dimensional projective spaces  $\bbP$ is due to Ottaviani \cite{ottaviani}, and we start this section recalling it. To this end, we fix integers $n, k \geq 1$. Let
\begin{equation}
\label{eq:a}
a \in \Mat_{k \times (n+k)}( \bbC)
\end{equation}
be a matrix of scalars, and let
\begin{align}
  \label{eq:D}  D  & := \diag( l_1,  \ldots, l_{n+k}),  \quad l_1,  \ldots, l_{n+k}
 \in H^0\bigl(\bbP,\calO_{\bbP}(1)\bigr),\\
  \label{eq:D'} D' & := \diag( l_1', \ldots, l_{n+k}'), \quad l_1', \ldots,
l_{n+k}' \in H^0\bigl(\bbP,\calO_{\bbP}(1)\bigl),
\end{align}
be diagonal matrices with entries in $H^0(\bbP,\calO_{\bbP}(1))$. We consider the matrix
\begin{equation}
\label{eq:A2}
  A := a \cdot (D|D') \in \Mat_{k \times (2n+2k)}\Big(H^0\bigl(\bbP,\calO_{\bbP}(1)\bigr)\Big)
\end{equation}
as a morphism of vector bundles from $\calO_{\bbP}^{2n+2k}$ to $\calO_{\bbP}( 1)^{\oplus k}$.
%%%
\begin{prop}[(Ottaviani \cite{ottaviani}, Section 3)]
\label{prop:matrices}
For every choice of $a$,  $l_j, l_j'$, we have:
\par
{\rm i)} $A J A^t = 0$.
\par
{\rm ii)} The sheaf $\ker A( 1) \subseteq \calO_{\bbP}( 1)^{\oplus (2n+2k)}$ has at least
$n+k$ global sections.
\par
\noindent Moreover, if $a$ and the $l_j, l_j'$ are general,\footnote{i.e., the tuple $(a, l_j, l_j', j=1,...,n+k)$ lies in a dense open subset of the affine space 
$\Mat_{k \times (n+k)}( \bbC)\oplus H^0(\bbP,\calO_{\bbP}(1))^{\oplus 2(n+k)}$} then we have:
\par
{\rm iii)} The matrix $A$ has rank $k$ at every point of $\bbP$.
\par
{\rm iv)} If $k \geq 3$, then $\ker A( 1) \subseteq \calO_{\bbP}( 1)^{\oplus (2n+2k)}$
has exactly $n+k$ global sections.
\end{prop}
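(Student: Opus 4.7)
The plan is to dispatch (i) and (ii) by direct computation, prove (iii) via a stratification of $\bbP$ by the vanishing pattern of the diagonal entries, and tackle (iv), which is the main obstacle, by reducing the count of global sections to the injectivity of an auxiliary linear map.

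For (i) I would compute in block form. Since $D$ and $D'$ are diagonal, they commute and satisfy $D^t = D$, $(D')^t = D'$. Thus $A^t = \binom{D}{D'} a^t$, $JA^t = \binom{D'}{-D} a^t$, and
\[
 AJA^t \;=\; a\,(D|D')\,\binom{D'}{-D}\,a^t \;=\; a\,(DD' - D'D)\,a^t \;=\; 0.
\]
For (ii) I would exhibit an explicit basis of sections. For each $j \in \{1, \dots, n+k\}$ let $s_j \in H^0\bigl(\bbP, \calO_\bbP(1)^{\oplus(2n+2k)}\bigr)$ be the section with $-l'_j$ in the $j$-th slot, $l_j$ in the $(n+k+j)$-th slot, and zero elsewhere. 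Then $(D|D') s_j$ has $j$-th entry $l_j(-l'_j) + l'_j l_j = 0$ and all other entries zero, so $A s_j = 0$. The $s_j$ have pairwise disjoint supports and are therefore linearly independent, producing the required $n+k$ sections of $\ker A(1)$.

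For (iii) I would analyse the rank of $(D|D')(p)$ pointwise. Setting $Z_j := \{l_j = l'_j = 0\} \subset \bbP$, for generic $l_j, l'_j$ each $Z_j$ has codimension $2$ and any intersection of $m$ of the $Z_j$ has codimension $2m$ (or is empty). Since $2(n+1) > 2n+1$, no intersection of $n+1$ or more of the $Z_j$ is nonempty. Hence at every $p \in \bbP$ the set $S(p) := \{j : l_j(p) = l'_j(p) = 0\}$ satisfies $m := |S(p)| \le n$, and $(D|D')(p)$ has rank $n+k-m \ge k$, with image contained in the coordinate subspace of $\bbC^{n+k}$ indexed by the complement of $S(p)$. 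For a generic choice of $a$, every $(k \times (n+k-m))$-submatrix of $a$ (obtained by deleting $m \le n$ columns) has rank $k$, so $A(p) = a \cdot (D|D')(p)$ has rank $k$ everywhere.

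Part (iv) is the main obstacle. I would pass to global sections: $H^0(\ker A(1))$ is the kernel of the $\bbC$-linear map $A \colon H^0(\bbP, \calO_\bbP(1))^{\oplus(2n+2k)} \to H^0(\bbP, \calO_\bbP(2))^{\oplus k}$, which factors as $a \circ \phi$ with $\phi(v, v') = (l_j v_j + l'_j v'_j)_j$ landing in $H^0(\bbP, \calO_\bbP(2))^{\oplus(n+k)}$. Since $l_j, l'_j$ are coprime for generic data, one checks that $\ker \phi$ equals the span of the $s_j$ from (ii), of dimension $n+k$; hence
\[
 h^0(\ker A(1)) \;=\; (n+k) + \dim \bigl( \im \phi \cap (\ker a \otimes H^0(\bbP, \calO_\bbP(2))) \bigr).
\]
Because $\im \phi = \bigoplus_j (l_j, l'_j)_2$, this intersection is exactly the kernel of
\[
 \Psi \colon \ker a \otimes H^0(\bbP, \calO_\bbP(2)) \longrightarrow \bigoplus_{j=1}^{n+k} H^0(Z_j, \calO_{Z_j}(2)), \q \sum_\alpha \mu_\alpha \otimes f_\alpha \longmapsto \Big( \sum_\alpha (\mu_\alpha)_j \, f_\alpha|_{Z_j} \Big)_j,
\]
where $\mu_1, \dots, \mu_n$ is a basis of $\ker a$. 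Its source has dimension $n \binom{2n+3}{2}$ and its target $(n+k) \binom{2n+1}{2}$, and a short manipulation shows source $\le$ target precisely when $k \ge 3$, pinpointing the hypothesis of (iv). For $k \ge 3$ the dimension count permits $\Psi$ to be injective; I would make this rigorous by exhibiting one specialisation $(a, l_j, l'_j)$ in which $\Psi$ is visibly injective and invoking upper semi-continuity, concluding that $\im \phi \cap (\ker a \otimes H^0(\bbP, \calO_\bbP(2))) = 0$ for generic data.
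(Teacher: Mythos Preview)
Your treatment of parts (i)--(iii) is correct and, up to cosmetic differences, matches the paper's argument. For (iii) the paper makes a specific choice (splitting $H^0(\calO_{\bbP}(1)) = V \oplus W$ with $l_j \in V$, $l'_j \in W$) rather than your general-position dimension count, but both are valid.

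The gap is in part (iv). Your reduction is sound: the equality
\[
  h^0\bigl(\ker A(1)\bigr) \;=\; (n+k) + \dim\bigl(\im\phi \cap (\ker a \otimes H^0(\calO_{\bbP}(2)))\bigr)
\]
is correct, your description of the intersection as $\ker\Psi$ is correct, and the dimension count identifying $k \ge 3$ as the threshold is a nice touch. But the sentence ``I would make this rigorous by exhibiting one specialisation \ldots\ in which $\Psi$ is visibly injective'' is precisely the content of the proof, and you have not supplied it. There is no reason to expect such a specialisation to be obvious: $\Psi$ depends simultaneously on $\ker a \subset \bbC^{n+k}$ and on the $n+k$ codimension-two subspaces $Z_j$, and producing data for which the restriction map is injective requires genuine work. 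Without it, (iv) is an outline, not a proof.

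For comparison, the paper does exactly this hard work, though organised differently. Using the splitting $H^0(\calO_{\bbP}(1)) = V \oplus W$ with $l_j \in V$, $l'_j \in W$, the syzygy equation decouples into three pieces living in $(\Sym^2 V)^k$, $(\Sym^2 W)^k$, and $(V \otimes W)^k$. The first two are shown to have only the trivial solution (for $k \ge 2$) by choosing $l_j$ to be the $v_j$ together with $v_1 + \cdots + v_{n+1}$ repeated; the third is shown to have exactly $n+k$ independent solutions (for $k \ge 3$) via a different, more elaborate specialisation of the $l_j, l'_j$, followed by a direct linear-algebra verification. The explicit choices and the case analysis that follows are the substance of the argument; your $\Psi$-formulation would require an analogous construction, which you have not provided.
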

%%%
\begin{proof}
i) This follows from the calculation
  \[
    (D|D')\cdot J\cdot (D|D')^t = (D|D')\cdot \left( \dfrac{D'}{-D} \right) = D D' - D' D = 0.
  \]
\par 
ii) The previous calculation also implies $A\cdot J\cdot  (D|D')^t = 0$. Consequently, the $n+k$ columns of the matrix $J\cdot (D|D')^t$ are elements in $\ker A( 1)$.
We may assume that the linear forms $l_j$ are all nonzero. Then, these columns are linearly independent.
\par 
For the rest of the proof, let all $(k \times k)$-minors of $a$ be nonzero, and choose a decomposition
  \[
    H^0\bigl(\bbP, \calO_{\bbP}(1)\bigr) = V \oplus W \quad\text{with } \dim V = \dim W =
n+1.
  \]
\par 
iii) Choose $l_1, \ldots, l_{n+k} \in V$, such that any $n+1$ of them are a basis of $V$. Consider a point in $\bbP$ where at least one $v \in V$ does not vanish. 
Then, at most $n$ of the forms $l_1, \ldots, l_{n+k}$ vanish there, so we can find $k$ of them that do not. The corresponding $(k \times k)$-minor of 
$a \cdot \diag( l_1, \ldots, l_{n+k})$ is nonzero at this point.
\par 
Choosing $l_1', \ldots, l_{n+k}' \in W$ similarly, we can achieve that $a \cdot \diag( l_1', \ldots, l_{n+k}')$ has rank $k$ at all points where at least one 
$w \in W$ does not vanish. This covers all points in $\bbP$, thereby proving that iii) holds for some $a$ and $l_j, l_j'$.
\par 
iv) We will again take $l_1, \ldots, l_{n+k} \in V$ and $l_1', \ldots, l_{n+k}'\in W$. A global section
  \[
    \left( \dfrac{b \oplus c}{b' \oplus c'} \right) \in H^0\bigl(\bbP, \calO_{\bbP}^{2n+2k}(1)\bigr), \quad b, b' \in V^{n+k}, \quad c, c' \in W^{n+k},
  \]
is then in $\ker A( 1)$ if and only if it satisfies the system of linear equations
\begin{align}
    \label{eq:1} aDb          & = 0 \in ( \Sym^2 V)^k,\\
    \label{eq:2} aD'c'        & = 0 \in ( \Sym^2 W)^k,\\
    \label{eq:3} a(Dc + D'b') & = 0 \in ( V \otimes W)^k.
\end{align}
Choosing bases $v_1, \ldots, v_{n+1} \in V$ and $w_1, \ldots, w_{n+1} \in W$, we write
\begin{align*}
    D  & = D_1  v_1 + \cdots + D_{n+1}  v_{n+1}, & \text{with}\q & D_1,
\ldots, D_{n+1}  \in \Mat_{(n+k) \times (n+k)}( \bbC),\\
    D' & = D_1' w_1 + \cdots + D_{n+1}' w_{n+1}, & \text{with}\q & D_1',
\ldots, D_{n+1}' \in \Mat_{(n+k) \times (n+k)}( \bbC).
\end{align*}
Specifying the linear forms $l_1, \ldots, l_{n+k} \in V$ and $l_1', \ldots, l_{n+k}' \in W$ is equivalent to specifying the diagonal matrices $D_1, \ldots, D_{n+1}$ 
and $D_1', \ldots, D_{n+1}'$. We also write
\begin{align*}
    b  & = b_1  v_1 + \cdots + b_{n+1}  v_{n+1}, & \text{with}\q & b_i  =
  (b_{i, 1},  \ldots, b_{i, n+k})^t  \in \bbC^{n+k},\\
    c  & = c_1  w_1 + \cdots + c_{n+1}  w_{n+1}, & \text{with}\q & c_i  =
  (c_{i, 1},  \ldots, c_{i, n+k})^t  \in \bbC^{n+k},
\end{align*}
and similarly for $b'$ and $c'$.
\par 
Suppose $k \geq 2$. We claim that Equation \eqref{eq:1} has only the trivial solution $b = 0$, if $l_1, \ldots, l_{n+k} \in V$ are general. It suffices to check this in 
one special case, say
\[
    l_j := \begin{cases} v_j, & \text{for } j \leq n+1,\\ v_1 + \cdots + v_{n+1}, &
\text{for } j \geq n+2 \end{cases}.
\]
This choice translates into
\[
    D_i = \diag( \underbrace{0, \ldots, 0}_{i-1}, 1, \underbrace{0, \ldots,
0}_{n+1-i}, \underbrace{1, \ldots, 1}_{k-1}).
\]
The coefficient for $v_i^2$ in Equation \eqref{eq:1} reads $a D_i b_i = 0$. As the relevant $(k \times k)$-minor of $a$ is nonzero, this implies $D_i b_i = 0$
and, hence,
\begin{equation} 
\label{eq:b_ii}
    b_{i, i} = b_{i, n+2} = b_{i, n+3} = \cdots = b_{i, n+k} = 0.
\end{equation}
Given indices $1 \leq i_1 < i_2 \leq n+1$, the coefficient for $v_{i_1} v_{i_2}$ in Equation \eqref{eq:1} reads
\[
    a (D_{i_1} b_{i_2} + D_{i_2} b_{i_1}) = 0 \in \bbC^k.
\]
Using \eqref{eq:b_ii}, we see that $D_{i_1} b_{i_2} + D_{i_2} b_{i_1} \in \bbC^{n+k}$ has at most $2 \leq k$ nonzero components; as $(k \times k)$-minors of $a$ are 
nonzero, we can conclude
\[
    D_{i_1} b_{i_2} + D_{i_2} b_{i_1} = 0 \in \bbC^{n+k}
\]
and, hence, $b_{i_1, i_2} = b_{i_2, i_1} = 0$. This shows that $b = 0$ is the only solution of \eqref{eq:1} for this particular choice, and, consequently, 
for a general choice, of $l_1, \ldots, l_{n+k} \in V$. Similarly, $c' = 0$ is the only solution of \eqref{eq:2} for a general choice of $l_1', \ldots, l_{n+k}' \in W$.
\par 
Suppose $k \geq 3$. It remains to show that Equation \eqref{eq:3} has only $n+k$ linearly independent solutions $(b', c)$, if $l_1, \ldots, l_{n+k} \in V$ and 
$l_1', \ldots, l_{n+k}' \in W$ are general. Again, it suffices to check this in one special case, say
\[
    l_j  := \begin{cases} v_j, & \text{for } j \leq n
            \\
              v_n, & \text{for } j = n+1
            \\
             v_{n+1}, & \text{for } j \geq n+2
        \end{cases}
\quad\text{and}\quad
    l_j' :=
\begin{cases}
  w_j, & \text{for } j \leq n+1
\\
  w_n, & \text{for } j = n+2
\\
  w_{n+1}, & \text{for } j \geq n+3
\end{cases}.
\]
This choice translates into
\begin{align*}
    D_i = D_i' & = \diag( \overbrace{0, \ldots, 0}^{i-1}, 1, \overbrace{0, \ldots,
    0, 0, 0, 0, 0, \dots, 0}^{n+k-i}),\quad \text{ for } i \leq n-1,\\
    D_n  & = \diag( 0, \ldots, 0, 0, 0, \ldots, 0, 1, 1, 0, 0, \dots, 0),\\
    D_{n+1}  & = \diag( 0, \ldots, 0, 0, 0, \ldots, 0, 0, 0, 1, 1, \dots, 1),\\
    D_n' & = \diag( 0, \ldots, 0, 0, 0, \ldots, 0, 1, 0, 1, 0, \dots, 0),\\
    D_{n+1}' & = \diag( \underbrace{0, \ldots, 0, 0, 0, \ldots, 0}_{n-1}, 0, 1, 0,
\underbrace{1, \dots, 1}_{k-2});
\end{align*}
note that $D_1 + \cdots + D_{n+1}$ and $D_1' + \cdots + D_{n+1}'$ both equal the identity matrix.
\par 
Given indices $1 \leq i_1, i_2 \leq n+1$, the coefficient for $v_{i_1} \otimes w_{i_2}$ in Equation \eqref{eq:3} reads
\[
    a (D_{i_1} c_{i_2} + D_{i_2}' b_{i_1}') = 0 \in \bbC^k.
\]
Due to the particular choice we have made, $D_{i_1} c_{i_2} + D_{i_2}' b_{i_1}' \in \bbC^{n+k}$ has at most $k$ nonzero components; as $(k \times k)$-minors of $a$ are 
nonzero, we can conclude
\[
    D_{i_1} c_{i_2} + D_{i_2}' b_{i_1}' = 0 \in \bbC^{n+k}.
\]
Taking separately the sum over all $i_2$ and the sum over all $i_1$, we get
\[
    b_{i_1}' = - D_{i_1}( c_1 + \cdots + c_{n+1}) \quad\text{and}\quad c_{i_2} = -
D_{i_2}'( b_1' + \cdots + b_{n+1}').
\]
Plugging the sum over all $i_1$ of the former equation into the latter, we conclude
\[
    c_{i_2} = D_{i_2}'( c_1 + \cdots + c_{n+1}).
\]
This shows that every solution $(b', c)$ of Equation \eqref{eq:3} satisfies
\[
    b' = -D( c_1 + \cdots + c_{n+1}) \quad\text{and}\quad c = D'( c_1 + \cdots +
c_{n+1}).
\]
As $c_1 + \cdots + c_{n+1} \in \bbC^{n+k}$, there are only $n+k$ linearly independent solutions.
\end{proof}
%%%
\begin{rem} 
\label{k=1,2} 
The hypothesis $k\ge 3$ in Proposition 3.1, iv), cannot be dropped. In fact, it is trivial to check that, if $k=1$, then 
$\ker A( 1) \subseteq \calO_{\bbP}( 1)^{\oplus (2n+2k)}$ has exactly $2n^2+3n+1 $ global sections; and it follows from \cite{AO}, Theorem 3.14, that, if $k=2$, then 
$h^0(\bbP,\ker A(1))= 2n+2$.
\end{rem}
%%%
\begin{defn}[\cite{ottaviani}, Section 3] \label{deftHooft}  
A symplectic $k$-instanton bundle $E$ on $\bbP$ given by a monad
\[
\begin{CD}
\calO_{\bbP}( -1)^{\oplus k} @> JA^t>> \calO_{\bbP}^{2n+2k} @>A>> \calO_{\bbP}( 1)^{\oplus k}
\end{CD}
\]
is called an \emph{'t Hooft instanton bundle}, if $A$ is of the form \eqref{eq:A2} introduced above.
\end{defn}
%%%
\begin{rem}
i) Proposition \ref{prop:matrices}, i) and iii), shows that general data
\[
  a \in \Mat_{k \times (n+k)}( \bbC) \quad\text{and}\quad l_1,  \ldots, l_{n+k},
l_1', \ldots, l_{n+k}' \in H^0\bigl(\bbP, \calO_{\bbP}(1)\bigr)
\]
actually define a symplectic 't Hooft $k$-instanton bundle $E$, via \eqref{eq:D}, \eqref{eq:D'}, \eqref{eq:A2}
and \eqref{eq:monad}. The condition on the trivial splitting type follows from the other conditions
(\cite{ottaviani}, Corollary 3.5).
\par 
ii) It follows from \cite{ottaviani}, Theorem 3.6, that any 't Hooft instanton bundle on $\bbP$ is stable.
\end{rem}
%%%
The following statement is also contained in \cite{ottaviani}, Proposition 3.3 and Theorem 3.7.
%%%
\begin{prop}
\label{sectionsthooft}
Let $E$ be a symplectic 't Hooft $k$-instanton bundle. Then
\begin{equation}
  \label{eq:sections}
    h^0\bigl(\bbP, E(1)\bigr) \geq n
\end{equation}
with equality if $k \geq 3$ and $E$ is general.
\end{prop}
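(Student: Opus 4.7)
The plan is to reduce the computation of $h^0(\bbP, E(1))$ to the count of global sections of $\ker A(1)$ supplied by Proposition \ref{prop:matrices}, ii) and iv).

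First, I would unpack the monad \eqref{eq:monad}. Set $K := \ker(A)$, which is a vector bundle by Proposition \ref{prop:matrices}, iii), so that we have a short exact sequence
\[
 0 \longrightarrow K \longrightarrow \calO_{\bbP}^{2n+2k} \longrightarrow \calO_{\bbP}(1)^{\oplus k} \longrightarrow 0.
\]
Since $E$ is the cohomology of the monad, the display of the monad produces a second short exact sequence
\[
 0 \longrightarrow \calO_{\bbP}(-1)^{\oplus k} \longrightarrow K \longrightarrow E \longrightarrow 0,
\]
where the left arrow is $JA^t$.

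Next, I would twist this display by $\calO_{\bbP}(1)$ to obtain
\[
 0 \longrightarrow \calO_{\bbP}^{\oplus k} \longrightarrow K(1) \longrightarrow E(1) \longrightarrow 0,
\]
and take the long exact sequence in cohomology. Since $H^1(\bbP, \calO_{\bbP})=0$, the map $H^0(\bbP,K(1)) \to H^0(\bbP,E(1))$ is surjective with kernel $H^0(\bbP,\calO_{\bbP}^{\oplus k}) = \bbC^k$, so
\[
 h^0\bigl(\bbP, E(1)\bigr) \;=\; h^0\bigl(\bbP, K(1)\bigr) - k.
\]

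Finally, I would plug in the bounds from Proposition \ref{prop:matrices}. Part ii) gives $h^0(\bbP, K(1)) \geq n+k$ unconditionally, which yields the inequality $h^0(\bbP, E(1)) \geq n$. For $k \geq 3$ and general $(a, l_j, l_j')$ —which is exactly what makes $E$ a general 't Hooft bundle— part iv) gives equality $h^0(\bbP, K(1)) = n+k$, hence $h^0(\bbP, E(1)) = n$. There is no real obstacle here; the entire content is packaged in Proposition \ref{prop:matrices}, and the only thing to verify carefully is the identification of the kernel of $H^0(K(1)) \to H^0(E(1))$ with the $k$ sections coming from $\calO_{\bbP}^{\oplus k}$, which is automatic from the display.
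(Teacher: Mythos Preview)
Your argument is correct and is essentially identical to the paper's proof: both twist the display of the monad by $\calO_{\bbP}(1)$, use $H^1(\bbP,\calO_{\bbP}^{\oplus k})=0$ to obtain $h^0(\bbP,E(1)) = h^0(\bbP,\ker A(1)) - k$, and then invoke Proposition~\ref{prop:matrices}, ii) and iv). The only cosmetic difference is that you spell out the two short exact sequences of the display, whereas the paper states directly that $E(1)$ is the cohomology of the twisted monad; note also that your appeal to Proposition~\ref{prop:matrices}, iii), for $K$ being locally free is unnecessary, since the surjectivity of $A$ is already part of $E$ being an instanton bundle.
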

%%%
\begin{proof}
Let $E$ be a symplectic $k$-instanton bundle, given as the cohomology of the monad \eqref{eq:monad}. Then, $E( 1)$ is the cohomology of a monad
\[
  \begin{CD}
    \calO_{\bbP}^{\oplus k} @>>> \calO_{\bbP}( 1)^{\oplus (2n+2k)} @> A( 1) >> \calO_{\bbP}( 2)^{\oplus k}.
  \end{CD}
\]
Using $H^1(\bbP, \calO_{\bbP}^{\oplus k}) = 0$, this implies, in particular,
\[
    h^0\bigl(\bbP, E(1)\bigr) = h^0\bigl(\bbP, \ker A( 1)\bigr) - k.
\]
Hence, the claim follows from Proposition \ref{prop:matrices}, ii) and iv).
\end{proof}
%%%
\begin{rem}
The hypothesis $k\ge 3$ in the above proposition cannot be dropped. In fact, let $E$ be a symplectic $k$-instanton bundle on $\bbP^{2n+1}$. By Remark \ref{k=1,2}, 
we know that if $k=1$ ($k=2$) then $h^0(\bbP, \ker A(1))=2n^2+3n+1$ ($h^0(\bbP, \ker A(1))=2n+2$) and, hence, $h^0(\bbP, E(1))=2n^2+3n$ ($h^0(\bbP, E(1))=2n$, 
respectively).
\end{rem}
%%%
\begin{rem}[(Deformations of 't Hooft instantons)]
\label{ex:Otta}
Let $E$ be a general symplectic 't Hooft $k$-instanton bundle over $\bbP$. Ottaviani claims in \cite{ottaviani} that every infinitesimal deformation of $E$ as 
a symplectic bundle comes from a deformation as a symplectic 't Hooft bundle for $n \geq 2$ and $k \geq 9$. This would mean that (the closure of) the locus of 
't Hooft bundles is an irreducible component of the moduli space of symplectic instanton bundles and be very interesting in view of our rationality result 
\ref{cor:ModRat}. Unfortunately, there is not much of a proof for that in \cite{ottaviani}, just a reference to a future paper which has not yet appeared.
\par
The claim of Ottaviani in question is equivalent to
\begin{eqnarray*}
 && \displaystyle{\dim_{\bbC} \left\{\, X \in \Mat_{k \times (2n+2k)}\big(H^0\bigl(\bbP, \calO_{\bbP}(1)\bigr)
\big)\,|\, A\cdot J\cdot X^t \text{ is symmetric } \,\right\}}
\\
 && \displaystyle{= (5kn + 4n^2) + \dim\bigl( \GL_k(\C) \times \Sp_{2n+2k}(\C)\bigr) }
\\
&& = (n+k)\cdot (6n+3k+1)
\end{eqnarray*}
where $A$ is still a matrix of the form \eqref{eq:A2} above, with general parameters $a$ and $l_j, l_j'$. Using \sc Maple\rm, we checked this claim for the following 
values of $(n,k)$:
\[
 (2,9),\ (2,10),\ (3,6),\ (3,7),\ (3,8),\ (3,9),\ (4,5),\ (4,6),\ (4,7),\ (5,5),\ (5,6).
\]
In these cases, the closure of the locus of 't Hooft bundles is an irreducible component of the moduli space of symplectic instanton bundles. (Note that the reducibility 
of the moduli space of all instanton bundles was established in \cite{AO00}.)
\end{rem}
%%%
We devote the last part of this section to the introduction of symplectic RS-instanton bundles on $\bbP$. For $ n=1 $, symplectic RS-instanton bundles were introduced and 
studied independently by Rao in \cite{rao} and by Skiti in \cite{skiti}. They are characterized as rank 2 $k$-instanton bundles on $\bbP^3$ with  a jumping line of 
maximal order, namely, of order $k$. The moduli space of $k$-instantons on $\bbP^3$ has a natural stratification by the maximal order of a jumping line for $E$, and 
Rao and Skiti proved that the stratum  that consists of $k$-instantons having a jumping line of the maximal possible order $k$ is an irreducible rational variety of 
dimension $6k+2$. Our next goal will be to introduce the notion of symplectic RS-instanton bundles on $\bbP$ and study some of their properties.
\par
Again, we fix integers $k,n \ge 1$. Let
\begin{equation}
\label{eq:aa}
  H:=(h_{ij}) \in \Mat_{k \times (n+k)}\Bigl(H^0\bigl(\bbP, \calO_{\bbP}(1)\bigr)\Bigr)
\end{equation}
be a persymmetric matrix of linear forms $h_{ij} \in H^0(\bbP,\calO_{\bbP}(1))$, i.e., a matrix such that $h_{ij}=h_{st}$ if  $i+j=s+t$. Let
\begin{equation}
\label{eq:L}
L   := \{\, f_0=f_1=\cdots=f_n=0\, \}
\end{equation}
be a linear $n$-space which contains no zeros of the maximal minors of $H$.  We consider the matrices
\[
F := \left (
\begin{array}{cccccccccc}
f_{0} & f_{1} & \cdots &f_{n}  & 0 & \cdots& \cdots & 0
\\ 0 &
f_{0} & f_{1} & \cdots & f_{n}  & 0  &  \cdots &0
\\
0 & 0 & f_{0} & f_{1} & \cdots & f_{n} & \ddots & \vdots
\\
\vdots & \cdots & \ddots & \ddots & \ddots   & \cdots & \ddots &  0
\\
0  & \cdots & \cdots & 0 & f_0  & f_1 & \cdots & f_{n}
\end{array}
\right )
\]
and
\begin{equation}
\label{eq:RSA}
A:=(F|H) \in \Mat_{k \times (2n+2k)}\Big(H^0\big(\bbP, \calO_{\bbP}(1)\big)\Big).
\end{equation}
The matrix $A$ will be considered  as a morphism of vector bundles from $\calO_{\bbP}^{2n+2k}$ to $\calO_{\bbP}( 1)^{\oplus k}$ and it satisfies the following 
properties.
%%%
\begin{prop}
\label{propertiesSR}
For general choices of $f_s \in H^0(\bbP,\calO_{\bbP}(1))$, $0 \le s \le n$, and of the persymmetric matrix $H$
in \eqref{eq:aa}, we have:
\par
{\rm i)} $A J A^t = 0$.
\par
{\rm ii)} The sheaf $\ker A _L\subseteq \calO_{L}^{2n+2k}$ has exactly $n+k$ global sections.
\par
{\rm iii)} The matrix $A$ has rank $k$ at every point $x$ of $\bbP$.
\end{prop}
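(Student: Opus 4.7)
The plan is to verify the three claims in turn. Claim (i) is an unconditional algebraic identity, claim (iii) a pointwise rank computation, and claim (ii) the substantive part whose proof hinges on the genericity hypothesis.

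For (i), direct expansion gives $AJA^t = (F\mid H)\,J\,(F\mid H)^t = FH^t - HF^t$, so it is enough to show $FH^t$ is symmetric. Using $F_{i,s}=f_{s-i}$ (nonzero only for $0\le s-i\le n$) and the persymmetric hypothesis $h_{i,j}=:g_{i+j}$, one computes $(FH^t)_{i,j}=\sum_{u=0}^{n} f_u\,g_{i+j+u}$, an expression depending only on $i+j$. This proves symmetry and hence (i), and it holds for all choices of $f_s$ and $H$.

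For (iii), I would check the rank of $A(x)$ at each $x\in\bbP$. If $x\in L$, then $F(x)=0$ (since all $f_s$ vanish on $L$), while $H(x)$ has rank $k$ by the assumption that $L$ contains no zero of the maximal minors of $H$; hence $A(x)=(0\mid H(x))$ has rank $k$. If $x\notin L$, some $f_t(x)\ne 0$, and any row relation $\sum_{i=1}^{k} c_i\,F_{i,\cdot}(x)=0$ is equivalent, via the polynomials $C(z):=\sum_{i=1}^{k} c_i z^{i}$ and $P(z):=\sum_{t=0}^{n} f_t(x)z^{t}$, to $C(z)P(z)=0$ in $\bbC[z]$; since $P\not\equiv 0$, this forces $C\equiv 0$, so the rows of $F(x)$ alone are already independent and $A(x)$ has rank $k$.

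For (ii), the key observation is that $F|_{L}=0$, so $A|_{L}=(0\mid H|_{L})$ and consequently $\ker A|_{L}=\calO_{L}^{n+k}\oplus \ker(H|_{L})$, giving $h^{0}(\ker A|_{L})=(n+k)+h^{0}(\ker H|_{L})$. Since $H|_{L}$ has rank $k$ everywhere on $L$, there is a short exact sequence $0\to \ker(H|_{L})\to \calO_{L}^{n+k}\to \calO_{L}(1)^{k}\to 0$, reducing the claim to injectivity of the induced map $\mu\colon \bbC^{n+k}\to H^{0}(L,\calO_{L}(1))^{k}\cong \bbC^{k(n+1)}$ sending $v=(v_j)$ to $\bigl(\sum_{j=1}^{n+k}\bar g_{i+j}\,v_j\bigr)_{i=1}^{k}$, where $\bar g_m := g_m|_{L}$. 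This is the main obstacle. The dimensions are compatible ($n+k\le k(n+1)$ whenever $k\ge 1$), and $h^{0}(\ker H|_{L})=0$ is an open condition on the parameters by semicontinuity, so it suffices to exhibit a single persymmetric $H$ and linear subspace $L$ for which $\mu$ is injective. I would construct such an instance by specializing the restrictions $\bar g_m$ to a combinatorial pattern against a fixed basis of $H^{0}(L,\calO_{L}(1))\cong\bbC^{n+1}$, arranging the $k(n+1)$ resulting scalar equations so that, after a suitable reordering, they form an upper triangular system with nonzero diagonal in the $n+k$ unknowns $v_j$; this then forces $v=0$ and completes the argument.
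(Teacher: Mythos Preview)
Your proposal is correct and follows the same approach as the paper, though with considerably more detail. The paper's proof is extremely terse: for (i) it records only the identity $FH^t-HF^t=0$ without your symmetry computation; for (ii) it simply asserts that $A_L=(0\mid H_L)$ has exactly $n+k$ degree-zero syzygies; and for (iii) it gestures at the hypothesis on the maximal minors of $H$ without spelling out the case $x\notin L$.

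Your argument for (iii) via the polynomial factorisation $C(z)P(z)=0$ is a clean way to see that $F(x)$ already has rank $k$ off $L$, and your reduction in (ii) to $h^0(\ker H|_L)=0$ together with the openness-plus-example strategy is exactly the right way to make the paper's bare assertion rigorous. The only place your proposal is not fully executed is the explicit construction of the specializing pattern at the end of (ii); but note that the paper itself supplies such an example later (the Spindler--Trautmann matrix $H_0$ in Example \ref{ex:RSNonTH}), and any choice making the $\bar g_m$ run through a basis of $H^0(L,\calO_L(1))$ in a shifted-diagonal pattern will do.
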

\begin{proof}
i) This part follows from the calculation
\begin{equation*}
    (F|H)\cdot J\cdot (F|H)^t = (F|H)\cdot \left( \dfrac{H^t}{-F^t} \right) = F H^t - HF^t = 0.
\end{equation*}
\par 
ii) This is due to the fact that
\[
A_L =(0|H_L)
\]
has exactly $n+k$ independent syzygies of degree 0. (In general, we denote by ${\rm Syz}_d(A)$ the space of relations $\sum _{i=1}^q \alpha_i\cdot c_i=0$ among 
the column vectors  $c_1,...,c_q$ of the matrix $A$ with ${\rm deg}(\alpha_i)=d$, $i=1,...,q$, $d\ge 0$.)
\par 
iii) We deduce this from the fact that the linear space $L = \{\, f_0=f_1=\cdots=f_n=0\, \} $ contains no zeros of the maximal minors of $H$.
\end{proof}
%%%
\begin{defn}
\label{defRSinstanton}
A symplectic $k$-instanton bundle $E$ on $\bbP$ given by a monad
\[
\begin{CD}
\calO_{\bbP}( -1)^{\oplus k} @>JA^t>> \calO_{\bbP}^{2n+2k} @>A>> \calO_{\bbP}( 1)^{\oplus k}
\end{CD}
\]
is called an \emph{RS-instanton bundle}, if $A$ is of the form \eqref{eq:RSA} introduced above.
\end{defn}
%%%
\begin{rem}
\label{rem:RSStable}
i) Examples of symplectic RS-instanton bundle $E$ on $\bbP$ include the special symplectic instanton bundles
introduced by Spindler and Trautmann in \cite{ST}, Definition 4.1.
\par 
ii) Since any symplectic RS-instanton on $\bbP$ is determined by the choice of general linear forms
$f_s \in H^0(\bbP,\calO_{\bbP}(1))$, $0 \le s \le n$, and of a general persymmetric matrix $H$ as in
\eqref{eq:aa}, we immediately get that the family of symplectic RS-instanton bundles is irreducible.
This irreducibility together with the fact that special symplectic instanton bundles are stable \cite{AO}
allows us to conclude that there exists a non-empty open subset of stable symplectic RS-instanton bundles.
\end{rem}
%%%
In the next proposition we are going to prove that the bound given in Proposition \ref{boundsections} is sharp. Indeed, we have
%%%
\begin{prop}
\label{boundsections-sharp}
Let $E$ be a symplectic RS-instanton bundle on $\bbP$. There exists an unstable $n$-dimensional linear subspace $L\subset \bbP$ with maximal order of instability. 
Moreover, if $E$ is general, then $L$ is unique.
\end{prop}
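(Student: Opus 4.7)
The plan is to establish existence by restricting the monad presentation of $E$ to the tautological subspace $L = \{f_0=\cdots=f_n=0\}$ from the RS construction, and then handle uniqueness via a closed-locus argument on the Grassmannian combined with specialization.

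\textbf{Existence.} Set $L := \{f_0 = \cdots = f_n = 0\}$, so that $F_{|L} = 0$. The hypothesis that $L$ avoids every zero of the maximal minors of $H$ guarantees that both
\[
A_{|L} = (0 \mid H_{|L}) \qquad\text{and}\qquad JA^t_{|L} = \begin{pmatrix} H^t_{|L} \\ 0 \end{pmatrix}
\]
have rank $k$ at every point of $L$. Hence restriction to $L$ preserves exactness of the two short exact sequences
\[
0 \to K \to \calO_{\bbP}^{2n+2k} \stackrel{A}{\to} \calO_{\bbP}(1)^{\oplus k} \to 0, \qquad 0 \to \calO_{\bbP}(-1)^{\oplus k} \to K \to E \to 0,
\]
so that $E_{|L}$ is the cohomology of the restricted monad. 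Passing to $H^0$ in the restricted second sequence and invoking $H^0(\calO_L(-1)) = H^1(\calO_L(-1)) = 0$ (since $L \cong \bbP^n$ with $n \geq 1$) yields $h^0(E_{|L}) = h^0(K_{|L}) = n+k$ by Proposition \ref{propertiesSR}(ii), matching the upper bound of Proposition \ref{boundsections}.

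\textbf{Uniqueness.} By upper semicontinuity of $h^0$, the subset
\[
\Sigma_E := \{\, L' \in \Gr(n,\bbP) : h^0(E_{|L'}) \geq n+k \,\}
\]
is closed. The goal is to show $\Sigma_E = \{L\}$ for general $E$. I would form the incidence $I \subseteq \calM_{\rm RS} \times \Gr(n,\bbP)$ of pairs $(E, L')$ with $L' \in \Sigma_E$; the existence part provides a tautological section $s\colon \calM_{\rm RS} \to I$, $E \mapsto L$. Since $\calM_{\rm RS}$ is irreducible (Remark \ref{rem:RSStable}), it suffices to prove that the generic fibre of the first projection $\pi_1 \colon I \to \calM_{\rm RS}$ is a single reduced point. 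Via the restricted monad, the condition $h^0(E_{|L'}) = n+k$ translates into a corank-$(n+k)$ condition on the linear map $H^0(\calO_{L'}^{2n+2k}) \to H^0(\calO_{L'}(1)^{\oplus k})$ induced by $A_{|L'}$, cutting $\Sigma_E$ out by determinantal equations.

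The most promising completion is specialization: pick a convenient RS-instanton $E_0$ (for instance a special symplectic instanton of Spindler--Trautmann, which is RS by Remark \ref{rem:RSStable}(i), or one built from standard coordinates $f_s = x_s$ and a model persymmetric $H$), choose $L'$ in coordinates on the Grassmannian, and verify by a direct minor computation that the determinantal locus collapses to $L$ alone. Upper semicontinuity then propagates this conclusion to a dense open subset of $\calM_{\rm RS}$. The main obstacle is precisely this uniqueness step: one must rule out unexpected positive-dimensional components of $\Sigma_E$ that could survive specialization, which is where the bulk of the work in the proof will lie; a straight dimension count in $\Gr(n,\bbP)$ against the codimension forced by the corank-$(n+k)$ drop is the first thing I would attempt before resorting to an explicit example.
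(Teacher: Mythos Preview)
Your existence argument matches the paper's: restrict the monad to $L=\{f_0=\cdots=f_n=0\}$, observe that $A_{|L}=(0\mid H_{|L})$ has exactly $n+k$ constant syzygies by Proposition~\ref{propertiesSR}(ii), and read off $h^0(E_{|L})=h^0(K_{|L})=n+k$ from the two restricted short exact sequences.

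For uniqueness you set up a semicontinuity/incidence framework but, as you acknowledge, do not carry out the decisive step. The paper bypasses this indirect route. After normalizing coordinates so that $f_i=x_i$, it takes an arbitrary $L'\neq L$; then $L'\not\subseteq\{x_i=0\}$ for some $i$, say $i=0$. From the same restricted monad sequences you used, $h^0(E_{|L'})=h^0(K_{|L'})$, and the paper argues directly that the constant-syzygy map
\[
\bbC^{2n+2k}\longrightarrow H^0\bigl(\calO_{L'}(1)\bigr)^{\oplus k}
\]
induced by $A_{|L'}=(F_{|L'}\mid H_{|L'})$ has kernel of dimension at most $n+k-1$: since $x_0$ is nonzero on $L'$, the staircase block $F_{|L'}$ contributes nontrivially, and for general $h_{ij}$ this forces the rank above $n+k$. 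One argument covers every $L'\neq L$ simultaneously, with genericity imposed on $H$ once and for all rather than recovered through a limit.

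Your specialization strategy is not wrong in principle, but it only relocates the difficulty: verifying $\Sigma_{E_0}=\{L_0\}$ for a Spindler--Trautmann bundle or any other explicit $E_0$ requires the same direct syzygy count the paper performs for general $H$, so nothing is saved. You should also be aware that showing a single fibre $\pi_1^{-1}(E_0)$ is a point only gives, via semicontinuity, that the generic fibre is finite; to conclude it is a single point you still need to argue that no component of $I$ other than the image of the section $s$ dominates $\calM_{\rm RS}$, a step your outline omits.
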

%%%
\begin{proof}
Let $E$ be  a symplectic RS-instanton bundle $E$ on $\bbP$ associated with an $n$-space $L= \{\,f_0=f_1=\cdots=f_n=0\, \}$. By Proposition \ref{boundsections}, 
$h^0(E|_L) \leq n+k$. On the other hand, it is easy to see, using Proposition \ref{propertiesSR} and the display of the monad associated with $E$ 
(see \cite{OSS}, page 239), that $h^0(E|_L)$ equals the number of linearly independent linear syzygies of the matrix $A$ evaluated on $f_0=f_1=\cdots=f_n=0$. 
Hence, $h^0(E|_L) \geq n+k$ and thus we get the equality.
\par 
It remains to prove the uniqueness. Let $E$ be a general symplectic RS-instanton bundle on $\bbP$. After a change of coordinates if necessary we can assume without 
loss of generality that $E$ is the cohomology of the monad
\begin{equation}
\label{monadL}
\begin{CD}
\calO_{\bbP}( -1)^{\oplus k} @> JA^t >> \calO_{\bbP}^{2n+2k} @>A>> \calO_{\bbP}( 1)^{\oplus k}
\end{CD}
\end{equation}
where $A=(F|H)$ with $H=(h_{ij})$ a persymmetric matrix of linear forms and
\[
  F = \left (
\begin{array}{cccccccccc}
x_{0} & x_{1} & \cdots &x_{n}  & 0 & \cdots& \cdots & 0
\\ 0 &
x_{0} & x_{1} & \cdots & x_{n}  & 0  &  \cdots &0
\\
0 & 0 & x_{0} & x_{1} & \cdots & x_{n} & \ddots & \vdots
\\
\vdots & \cdots & \ddots & \ddots & \ddots   & \cdots & \ddots &  0
\\
0  & \cdots & \cdots & 0 & x_0  & x_1 & \cdots & x_{n}
\end{array}
\right),
\]
$x_0,..., x_{2n+1}$ being homogeneous coordinates on $\bbP$. Let $L$ be the $n$-space $\{\, x_0=x_1=\cdots=x_n=0\, \}$. It follows from the first part of the proof 
that $h^0(E|_{L})=2n+k-n=n+k$. The fact that $L$ is the unique $n$-space with this property results from the following
%%%
\begin{claim*}
For any $n$-space $L' \subset \bbP$, $L'\neq L$, $h^0(E|_{L'})<n+k$.
\end{claim*}
%%%
We now prove the claim. Since $L' \neq L$, there exists $i \in \{\,0, ..., n \,\}$, such that $L' \nsubseteq \{\, x_i=0\, \}$. Without loss of generality assume 
that $i=0$. Associated with the monad (\ref{monadL}), we have the two short exact sequences
\[
\begin{CD}
0 @>>> K @>>> \calO_{\bbP}^{2n+2k} @>A>> \calO_{\bbP}( 1)^{\oplus k} @>>> 0
\end{CD}
\]
and
\[
\begin{CD}
0 @>>> \calO_{\bbP}( -1)^{\oplus k}  @>>> K @>>> E @>>> 0
\end{CD}
\]
where $K$ stands for the kernel of $A$. Restricting both sequences to $L'$ and taking cohomology, we see that $h^0(E|_{L'})=h^0(K|_{L'})$. Since 
$L' \nsubseteq \{\, x_0=0\, \}$ and the linear forms $h_{ij}$ are general, we have $h^0(K|_{L'})\leq n+k-1$. This finishes the proof of the claim and the proposition.
\end{proof}
%%%
By means of the following example, we will see that there are symplectic RS-instanton bundles which are not limits of symplectic 't Hooft instanton bundles.
%%%
\begin{example}
\label{ex:RSNonTH}
Fix homogeneous coordinates $x_0, ...,x_n$, $y_0, ...,y_n$ on $\bbP$. Given a complex number $\eps$, we consider the persymmetric matrix
\[
H_{\eps}:= \left(
\begin{array}{rccccccl}
 \eps y_1 & 0 & \cdots & 0 & y_0 & \cdots& y_{n-1} & y_n
\\
0 & \cdots & 0 & y_0 & \cdots&  y_{n-1} & y_n & 0
\\
\vdots & \iddots & \iddots & \cdots&  \iddots & \iddots & \iddots & \vdots
\\
0 &  y_0 & \cdots&  y_{n-1} & y_n & 0 &\cdots &0
\\
 y_0 & \cdots & y_{n-1} & y_n & 0 & \cdots & 0 & \eps y_1
\end{array}
\right)
\]
and the $((2n+2k)\times k)$-matrix $A_{\eps}=(F|H_{\eps})$ with
\[
F =\left (
\begin{array}{cccccccccc}
x_{0} & x_{1} & \cdots &x_{n}  & 0 & \cdots& \cdots & 0
\\ 0 &
x_{0} & x_{1} & \cdots & x_{n}  & 0  &  \cdots &0
\\
0 & 0 & x_{0} & x_{1} & \cdots & x_{n} & \ddots & \vdots
\\
\vdots & \cdots & \ddots & \ddots & \ddots   & \cdots & \ddots &  0
\\
0  & \cdots & \cdots & 0 & x_0  & x_1 & \cdots & x_{n}
\end{array}
\right ).
\]
The matrix $A_0$ has rank $k$ at every point of $\bbP$ and defines a symplectic $k$-instanton bundle,
namely a special symplectic instanton bundle in the sense of \cite{ST}. Therefore, if $\eps$ is small enough,
then $A_{\eps}$ still defines a symplectic $k$-instanton bundle. Assume $n \geq 2$ and 
$k \geq 3$. If $\eps \neq 0$ is small enough, then the RS-instanton bundle $E_{\eps}$  defined by $A_{\eps}$ is not a limit of 't Hooft instanton bundles. 
Indeed, this will follow from Proposition \ref{sectionsthooft} and the following 
%%%
\begin{prop}
  Assume $n \geq 2$ and $k \geq 3$. If $\eps \neq 0$ is sufficiently small, then
  the RS-instanton bundle $E_{\eps}$ defined by the matrix $A_{\eps}$ satisfies
  $H^0(\bbP, E_{\eps}(1))=0$.
\end{prop}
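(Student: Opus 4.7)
The plan is to compute $h^0(\bbP, \ker A_\varepsilon(1))$ for $\varepsilon \neq 0$ small by decomposing sections according to the $(x,y)$-bidegree, in parallel with the proof of Proposition \ref{prop:matrices} iv), and to conclude by upper-semicontinuity in $\varepsilon$. Twisting the monad \eqref{eq:monad} by $\calO_{\bbP}(1)$ and using $H^1(\bbP, \calO_{\bbP}^{\oplus k}) = 0$ yields $h^0(\bbP, E_\varepsilon(1)) = h^0(\bbP, \ker A_\varepsilon(1)) - k$, exactly as in the proof of Proposition \ref{sectionsthooft}. The symplectic identity $A_\varepsilon JA_\varepsilon^t = 0$ makes the $k$ columns of $JA_\varepsilon^t$ into global sections of $\ker A_\varepsilon(1)$, so $h^0(\bbP, \ker A_\varepsilon(1)) \geq k$ unconditionally; the task is to show that these ``trivial'' syzygies exhaust the space of sections when $\varepsilon \neq 0$ is small.

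Setting $V_x := \langle x_0, \ldots, x_n\rangle$ and $V_y := \langle y_0, \ldots, y_n\rangle$, I would write any section as $v = (P + Q \mid P' + Q')$ with $P, P' \in V_x^{n+k}$ and $Q, Q' \in V_y^{n+k}$. Since $F$ involves only the $x_s$ and $H_\varepsilon$ only the $y_s$, the relation $A_\varepsilon v = 0$ decouples by bidegree into
\[
 \text{(I)}\ FP = 0 \text{ in } (\Sym^2 V_x)^k,\qquad \text{(II)}\ H_\varepsilon Q' = 0 \text{ in } (\Sym^2 V_y)^k,\qquad \text{(III)}\ FQ + H_\varepsilon P' = 0 \text{ in } (V_x\otimes V_y)^k,
\]
in perfect analogy with \eqref{eq:1}, \eqref{eq:2}, \eqref{eq:3}. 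The trivial syzygies have $P = Q' = 0$ and form a $k$-dimensional subspace of solutions to (III). For (I), with $\tilde P(t) = \sum_{j=1}^{n+k} P_j\,t^{n+k-j}$ and $X(t) = \sum_{i=0}^n x_i\,t^i$, the equation $FP = 0$ translates to the vanishing, as polynomials in $x_0, \ldots, x_n$, of the coefficients of $t^n, \ldots, t^{n+k-1}$ in $X(t)\,\tilde P(t)$; since $X(t)$ is a non-zero-divisor in $\bbC[x_0, \ldots, x_n][t]$, a direct polynomial argument (for $k \geq 2$) forces $\tilde P \equiv 0$. For (II), at $\varepsilon = 0$ the matrix $H_0$ has the same band structure as $F$ after column reversal, so (II) reduces to (I) and gives $Q' = 0$; upper-semicontinuity of the kernel dimension of $Q' \mapsto H_\varepsilon Q'$ then propagates $Q' = 0$ to all sufficiently small $\varepsilon$.

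The main obstacle is (III). By upper-semicontinuity of $h^0(\bbP, \ker A_\varepsilon(1))$ on the affine $\varepsilon$-line and cofiniteness of nonempty Zariski-open subsets of $\bbA^1$, it suffices to produce a single $\varepsilon_0$ at which the solution space of (III) has dimension exactly $k$; necessarily $\varepsilon_0 \neq 0$, since the special-symplectic structure of $E_0$ produces extra sections at $\varepsilon = 0$. I would establish the claim at such an $\varepsilon_0$ by expanding $FQ + H_{\varepsilon_0} P' = 0$ in the basis $\{x_i y_j\}$ of $V_x \otimes V_y$, obtaining $k(n+1)^2$ scalar equations in $2(n+k)(n+1)$ scalar unknowns, and carrying out an elimination that exploits the band structure of $F$ together with the persymmetric structure of $H_{\varepsilon_0}$: the corner entries $\varepsilon_0 y_1$ at positions $(1,1)$ and $(k, n+k)$ impose boundary conditions that kill precisely the extra solutions present at $\varepsilon = 0$, leaving only the trivial syzygies. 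This explicit elimination is the technically hardest part, and is where the hypotheses $n \geq 2$ and $k \geq 3$ become essential.
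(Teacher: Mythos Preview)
Your outline matches the paper's proof: the same reduction to $\operatorname{syz}_1(A_\eps)=k$, the same bidegree decomposition into (I)/(II)/(III), semicontinuity at $\eps=0$ for (II), and an explicit elimination exploiting the corner entries $\eps y_1$ for (III). The paper likewise identifies the $k$ columns of $JA_\eps^t$ as the ``trivial'' syzygies and shows they span.

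One step does not work as written. For (I), the non-zero-divisor property of $X(t)$ in $\bbC[x_0,\ldots,x_n][t]$ would let you conclude $\tilde P=0$ from $X(t)\tilde P(t)=0$, but $FP=0$ only asserts that the $k$ middle coefficients (degrees $n$ through $n+k-1$) vanish; the product has degree up to $2n+k-1$, so its lower and upper coefficients are unconstrained, and no cancellation argument is available. The paper handles (I) by comparing the coefficients of $x_j^2$ and $x_ax_b$ in each of the $k$ scalar equations directly and then running a short induction on $q-j$; this is where $k\geq 2$ enters. For (III), your semicontinuity detour is unnecessary: once (I) and (II) force $\alpha_q\in V_y$ and $\beta_q\in V_x$, the paper's elimination goes through for \emph{every} $\eps\neq 0$, not just a single $\eps_0$. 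Comparing $x_ay_b$-coefficients in the non-corner equations forces $\beta_{x_j}^q=\gamma_{q-j}$ and $\alpha_{y_j}^q=-\gamma_{k+1+j-q}$ (here $n\geq 2$ and $k\geq 3$ guarantee that every needed index difference actually appears among the non-corner equations); the two corner equations then read $\eps\gamma_{1-a}=0$ and $\eps\gamma_{n+k-a}=0$, so $\eps\neq 0$ kills $\gamma_{1-n},\ldots,\gamma_0$ and $\gamma_{k+1},\ldots,\gamma_{k+n}$, leaving exactly the parameters $\gamma_1,\ldots,\gamma_k$ that index the trivial syzygies.
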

%%%
\begin{proof}
By definition, $E_{\eps}$ is the cohomology of the monad
\[
 \begin{CD}
    \calO_{\bbP}( -1)^{\oplus k} @>JA_{\eps}^t>> \calO_{\bbP}^{2n+2k} @>A_{\eps}>>
\calO_{\bbP}( 1)^{\oplus k}.
 \end{CD}
\]
We have the two short exact sequences
\[
 \begin{CD}
0 @>>>  K_{\eps} @>>> \calO_{\bbP}^{2n+2k} @>A_{\eps}>> \calO_{\bbP}( 1)^{\oplus k},
 \end{CD}
\]
\[
 \begin{CD}
0 @>>> \calO_{\bbP}( -1)^{\oplus k} @>>> K_{\eps} @>>> E_{\eps} @>>> 0,
 \end{CD}
\]
where $K_{\eps}$ denotes the kernel of $A_{\eps}$. Twisting these short exact sequences by $\calO_{\bbP}(1)$ and taking cohomology, we find that 
$H^0(\bbP, K_{\eps}(1))$ coincides with the space
\[
 {\rm Syz}_1(A_{\eps})={\rm
ker}\Bigl(H^0\bigl(\mathbb{P},\calO_{\mathbb{P}}(1)^{\oplus (2n+2k)}\bigr)
  \stackrel{A_{\eps}}{\lra} H^0\bigl(\mathbb{P},\calO_{\mathbb{P}}(2)^{\oplus
k}\bigr)\Bigr)
\]
of all syzygies of $A_{\eps}$ of degree one, and that its dimension ${\rm syz}_1(A_{\eps})$ satisfies
\[
h^0\big(\bbP, E_{\eps}(1)\big)= {\rm syz}_1(A_{\eps})-k.
\]
Let us prove that the following independent vectors span the space ${\rm Syz}_1(A_{\eps})$:
%%%
\begin{align*}
v_1 & =(-\eps y_1,\overbrace{0, ...,0}^{k-2},-y_0, ..., -y_n,
  x_0, ..., x_n, \overbrace{0, ...,0}^{k-1})^{t},\\
v_i & =(\overbrace{0, ...,0}^{k-i},-y_0, ..., -y_n, \overbrace{0, ...,0}^{2i-2},
  x_0, ..., x_n, \overbrace{0, ...,0}^{k-i})^{t}, \quad 1 < i < k,\\
v_k & =(-y_0, ..., -y_n,\overbrace{0, ...,0}^{k-2}, -\eps y_1,
  \overbrace{0, ...,0}^{k-1}, x_0, ..., x_n )^{t}.
\end{align*}
Since $A_{\eps} \cdot v_i=0$, it is clear that $v_i$ belongs to ${\rm Syz}_1(A_{\eps})$, for $i = 1, \ldots, k$. Let us see that the other inclusion also 
holds. We consider an arbitrary vector
\[
v=(\alpha_1, ..., \alpha_{n+k}, \beta_1, ..., \beta_{n+k})^t \in {\rm Syz}_1(A_{\eps})
\]
with $\alpha_q, \beta_q \in H^0\big(\bbP, \calO_{\bbP}(1)\big)$ for $q = 1, \ldots, n+k$. Since $A_{\eps} \cdot v = 0$, we have
%%%
\begin{equation} 
\label{eq:Av=0}
\begin{array}{@{}c@{\, + \,}c@{\, + \cdots + \,}c@{\, + \,}c@{\, + \cdots + \,}l@{}}
  \alpha_1 x_0 & \alpha_2 x_1 & \alpha_{n+1} x_n & \beta_k y_0 & \beta_{n+k} y_n +
\eps \beta_1 y_1 = 0,\\[1ex]
  \alpha_i x_0 & \alpha_{1+i} x_1 & \alpha_{n+i} x_n & \beta_{k+1-i} y_0 &
\beta_{n+k+1-i} y_n = 0,
    \hspace{1ex} 1 < i < k,\\[1ex]
  \alpha_k x_0 & \alpha_{1+k} x_1 & \alpha_{n+k} x_n & \beta_1 y_0 & \beta_{n+1} y_n
+ \eps \beta_{n+k} y_1 = 0.
\end{array}
\end{equation}
%%%
Using the basis $x_0, \ldots, x_n, y_0, \ldots, y_n$ of $H^0\big(\bbP, \calO_{\bbP}(1)\big)$, we write
\[
  \alpha_q = \sum_{j = 0}^n (\alpha_{x_j}^q x_j + \alpha_{y_j}^q y_j)
\qquad\text{and}\qquad
  \beta_q = \sum_{j = 0}^n (\beta_{x_j}^q x_j + \beta_{y_j}^q y_j)
\]
with $\alpha_{x_j}^q, \alpha_{y_j}^q, \beta_{x_j}^q, \beta_{y_j}^q \in \bbC$, for $j = 0, \ldots, n$ and $q = 1, \ldots, n+k$. We first consider the 
$\alpha_{x_j}^q$. Comparing coefficients of $x_j^2$ in \eqref{eq:Av=0} shows
%%%
\begin{equation} 
\label{eq:alpha=0}
  \alpha_{x_j}^{j+1} = \alpha_{x_j}^{j+2} = \ldots = \alpha_{x_j}^{j+k} = 0, \qquad
j = 0, \ldots, n.
\end{equation}
%%%
Comparing coefficients of $x_a \cdot x_b$ in \eqref{eq:Av=0} shows
%%%
\begin{equation} 
\label{eq:alpha+alpha=0}
  \alpha_{x_b}^{a+1} + \alpha_{x_a}^{b+1} = \alpha_{x_b}^{a+2} + \alpha_{x_a}^{b+2}
    = \ldots = \alpha_{x_b}^{a+k} + \alpha_{x_a}^{b+k} = 0, \qquad 0 \leq a < b \leq n.
\end{equation}
%%%
We know from \eqref{eq:alpha=0} that \eqref{eq:alpha+alpha=0} also holds for $a = b$. This generalization of \eqref{eq:alpha+alpha=0} implies
\[ 
\begin{array}{r@{\qquad}ccl}
  \alpha_{x_j}^q = - \alpha_{x_{q-1}}^{j+1} = \alpha_{x_{j-1}}^{q+1}, & 
    j = 0, \ldots, n, & q = 1, \ldots, n+k, & q - j \leq 0,\\[1ex]
  \alpha_{x_j}^q = - \alpha_{x_{q-k}}^{j+k} = \alpha_{x_{j+1}}^{q-1}, & 
    j = 0, \ldots, n, & q = 1, \ldots, n+k, & q - j \geq k+1.
\end{array} 
\]
Starting from \eqref{eq:alpha=0}, we thus obtain $\alpha_{x_j}^q = 0$ for $q-j \geq 1$, by induction on $q-j$, and also $\alpha_{x_j}^q = 0$ for 
$q-j \leq 0$, by descending induction on $q-j$. This proves
%%%
\begin{equation} 
\label{eq:alpha_x=0}
  \alpha_{x_j}^q = 0, \qquad j = 0, \ldots, n, \quad q = 1, \ldots, n+k.
\end{equation}
%%%
We next consider the $\beta_{y_j}^q$. Comparing coefficients of all products $y_j^2$ and $y_a \cdot y_b$ in the equations \eqref{eq:Av=0} yields a system of 
linear equations
%%%
\begin{equation} 
\label{eq:Mbeta=0}
  M_{\eps} \cdot (\beta_{y_j}^q) = 0
\end{equation}
%%%
where $(\beta_{y_j}^q)$ is the column vector consisting of the $(n+1)(n+k)$ unknowns $\beta_{y_j}^q$, and the entries of the coefficient matrix $M_{\eps}$ 
depend continuously on $\eps$.
\par 
For $\eps = 0$, the equations \eqref{eq:Av=0} are symmetric in the $\alpha_q$ and $\beta_q$. Therefore, $M_0$ is also the coefficient matrix of the system of 
linear equations \eqref{eq:alpha=0} and \eqref{eq:alpha+alpha=0}. The latter system has only the trivial solution \eqref{eq:alpha_x=0}, so $M_0$ has maximal 
rank.
\par 
Hence, $M_{\eps}$ has maximal rank for all sufficiently small $\eps \in \bbC$, and, therefore, the system of linear equations \eqref{eq:Mbeta=0} has only the 
trivial solution
%%%
\begin{equation} 
\label{eq:beta_y=0}
  \beta_{y_j}^q = 0, \qquad j = 0, \ldots, n, \quad q = 1, \ldots, n+k.
\end{equation}
%%%
We finally consider the $\alpha_{y_j}^q$ and the $\beta_{x_j}^q$. Due to \eqref{eq:alpha_x=0} and \eqref{eq:beta_y=0}, we have
%%%
\begin{equation*}
  \alpha_q = \sum_{j = 0}^n \alpha_{y_j}^q y_j \qquad\text{and}\qquad
  \beta_q = \sum_{j = 0}^n \beta_{x_j}^q x_j, \qquad q = 1, \ldots, n+k.
\end{equation*}
%%%
Comparing coefficients of $x_a \cdot y_b$ in the $i$-th equation of the system \eqref{eq:Av=0} shows
%%%
\begin{align} 
\label{eq:beta=-alpha} 
\begin{split}
  \alpha_{y_b}^{i+a} + \beta_{x_a}^{k+1-i+b} = 0, \qquad & a =0, \ldots, n, \quad b
= 0, \ldots, n,\\
    & i = 1, \ldots, k, \quad (1, 1) \neq (b, i) \neq (1, k).
\end{split} 
\end{align}
%%%
Also, comparing coefficients of $x_{a+1} \cdot y_b$ in the $(i-1)$-st equation, we deduce
%%%
\begin{align*} 
\begin{split}
  \beta_{x_a}^{k+1-i+b} = \beta_{x_{a+1}}^{k+2-i+b}, \qquad & a =0, \ldots, n-1,
\quad b = 0, \ldots, n,\\
    & i = 2, \ldots, k, \quad (1, 2) \neq (b, i) \neq (1, k).
\end{split} 
\end{align*}
%%%
Since $n \geq 2$ and $k \geq 3$, all integers $1, \ldots, n+k-1$ occur as $k+1-i+b$ here. Hence, $\beta_{x_j}^q$ depends only on $q-j$, or, in other words
%%%
\begin{equation} 
\label{eq:beta=gamma}
  \beta_{x_j}^q = \gamma_{q-j}, \qquad j = 0, \ldots, n, \quad q = 1, \ldots, n+k,
\end{equation}
%%%
for some numbers $\gamma_{1-n}, \ldots, \gamma_{n+k} \in \bbC$. Substituting this back into \eqref{eq:beta=-alpha} gives
%%%
\begin{align} 
\label{eq:alpha=gamma} 
\begin{split}
  \alpha_{y_j}^q = -\gamma_{k+1+j-q}, \qquad & j = 0, \ldots, n, \quad q = 1,
\ldots, n+k,\\
    & (1, 1) \neq (j, q) \neq (1, n+k).
\end{split} 
\end{align}
%%%
Comparing coefficients of $x_a \cdot y_1$ in the first and last equation in \eqref{eq:Av=0} shows
%%%
\begin{align}
  \label{eq:gamma_{1-a}}
  0 = \alpha_{y_1}^{1+a} + \beta_{x_a}^{k+1} + \eps \beta_{x_a}^1
    = \alpha_{y_1}^{1+a} + \gamma_{k+1-a} + \eps \gamma_{1-a}, \qquad a = 0, \ldots,
n,\\
  \label{eq:gamma_{n+k-a}}
  0 = \alpha_{y_1}^{k+a} + \beta_{x_a}^2 + \eps \beta_{x_a}^{n+k}
    = \alpha_{y_1}^{k+a} + \gamma_{2-a} + \eps \gamma_{n+k-a}, \qquad a = 0, \ldots, n.
\end{align}
%%%
If $a > 0$, then $\alpha_{y_1}^{1+a} = -\gamma_{k+1-a}$ according to \eqref{eq:alpha=gamma}, and hence $\eps \gamma_{1-a} = 0$, by \eqref{eq:gamma_{1-a}}. 
Using the assumption $\eps \neq 0$, we thus obtain
\[
  \gamma_{1-n} = \gamma_{2-n} = \ldots = \gamma_0 = 0.
\]
If $a < n$, then $\alpha_{y_1}^{k+a} = -\gamma_{2-a}$ according to \eqref{eq:alpha=gamma}, and hence $\eps \gamma_{n+k-a} = 0$ by \eqref{eq:gamma_{n+k-a}}. 
Using the assumption $\eps \neq 0$, we thus obtain
\[
  \gamma_{k+1} = \gamma_{k+2} = \ldots = \gamma_{k+n} = 0.
\]
The remaining special cases $a = 0$ of \eqref{eq:gamma_{1-a}}, and $a = n$ of \eqref{eq:gamma_{n+k-a}}, give
%%%
\begin{align*}
  \alpha_{y_1}^1 & = -\gamma_{k+1} - \eps \gamma_1 = - \eps \gamma_1,\\
  \alpha_{y_1}^{n+k} & = -\gamma_{2-n} - \eps \gamma_k = - \eps \gamma_k.
\end{align*}
%%%
Combined with the last four displayed equations, \eqref{eq:beta=gamma} and \eqref{eq:alpha=gamma} precisely say
\[
  v = \gamma_1 v_1 + \gamma_2 v_2 + \cdots + \gamma_k v_k.
\]
This shows that the vectors $v_1, \ldots, v_k$ span the space ${\rm Syz}_1(A_{\eps})$, as claimed.
\end{proof}
\end{example}
%%%
\begin{rem}
The fact that  there are symplectic 't Hooft bundles which are not symplectic RS-instanton bundles follows from Sections 5 and 6 where we prove that the 
family of  symplectic  't Hooft bundles on $\bbP$ with charge $k$ is irreducible of dimension $5kn+4n^2$ and the family of symplectic RS-instanton bundles on 
$\bbP$ with charge $k$ is irreducible of dimension $(4n+2)\cdot k+4n^2+2n-4$.
\end{rem}
%%%
\section{The moduli space of  't Hooft instanton bundles and its birational type}
%%%
There are a natural parameter space for the data defining a monad for an 't Hooft instanton bundle, an action of a reductive algebraic group on that parameter space,
and a dense open subset of the parameter space which is invariant under the group action, such that two points in this dense open subset lie in the same orbit if and 
only if they define isomorphic 't Hooft instanton bundles (Lemma \ref{lem:GenEquiv}). Thus, the stack and the GIT quotient of the parameter space by the group action 
can be considered as natural moduli spaces for 't Hooft instanton bundles and shall be investigated in this section.
\par 
Consider the symplectic vector space
\[
  U := \bbC^2
\]
and the vector space
\[
  V := \bbC^k.
\]
We identify the matrix $a$ in \eqref{eq:a} with an element
\[
  a \in V^{\oplus(n+k)}.
\]
For each $j$, we identify the pair $l_j, l_j' \in H^0(\bbP, \calO_{\bbP}( 1))$ in \eqref{eq:D} and \eqref{eq:D'} with a linear map
\[
  U^* \longto H^0\bigl(\bbP, \calO_{\bbP}(1)\bigr), \quad j \in \{\,1, \ldots, n+k\,\}.
\]
Choosing a basis of $H^0(\bbP, \calO_{\bbP}( 1))$ identifies these linear maps with elements $L_j \in U^{\oplus (2n+2)}$, $j=1,...,n+k$. We put 
$L = (L_j)_{j=1,...,n+k} \in (U^{\oplus (2n+2)})^{\oplus (n+k)}$. For each element in the vector space
\[
  (U^{\oplus (2n+2)} \oplus V)^{\oplus (n+k)},
\]
we thus obtain a linear map
\[
\begin{CD}
  A\colon  U^* \otimes \calO_{\bbP}^{\oplus (n+k)} @>\diag( L)>> \calO_{\bbP}( 1)^{\oplus (n+k)} @> a>> V \otimes \calO_{\bbP}( 1)
\end{CD}
\]
as in \eqref{eq:A2}. It satisfies $A J A^t = 0$ for the standard symplectic form $J$ on the vector space $(U^*)^{\oplus (n+k)}$.
\par 
Consider the linear algebraic group
\[
  G_{n, k} := (\SL( U) \times \Gm) \wr S_{n+k} \times \GL( V).
\]
Recall that the wreath product $(\SL( U) \times \Gm) \wr S_{n+k}$ is the semidirect product
\[
\xymatrix{
  1 \ar[r]& (\SL( U) \times \Gm)^{\times (n+k)} \ar[r]& {(\SL( U) \times \Gm) \wr S_{n+k}}\ \hskip -1.1cm &
\ar[r] &\ar@(l,r)@/_10pt/[l]&\ \hskip -1.1cm S_{n+k} \ar[r] & 1
}
\]
where $S_{n+k}$ acts on $(\SL( U) \times \Gm)^{\times (n+k)}$ by permuting the factors.
\par 
We can write each element $g \in G_{n, k}$ in the form
\[
  g = (\sigma \cdot (\beta_j, \gamma_j)_j, \alpha)
\]
with $\alpha \in \GL( V)$, $\beta_j \in \SL( U)$, $\gamma_j \in \Gm$ and $\sigma \in S_{n+k}$. It induces an isomorphism
\[
\begin{CD}
  U^* \otimes \calO_{\bbP}^{\oplus (n+k)} @>\diag(g \cdot L) >>
\calO_{\bbP}( 1)^{\oplus (n+k)} @> g \cdot a >>
 V \otimes \calO_{\bbP}( 1)
\\
@V \sigma \circ (\beta_j^*)_j VV @VV  \sigma \circ (\gamma_j)_j V @VV \alpha V
\\
U^* \otimes \calO_{\bbP}^{\oplus (n+k)} @> \diag( L) >> \calO_{\bbP}( 1)^{\oplus (n+k)}
@> a >> V \otimes \calO_{\bbP}( 1)
\end{CD}
\]
with
\begin{align}
  \label{eq:action1} (g \cdot a)_j & := \alpha^{-1}( \gamma_j a_{\sigma( j)}),\\
  \label{eq:action2} (g \cdot L)_j & := \gamma_j^{-1} \beta_j( L_{\sigma( j)}).
\end{align}
These formulas define a linear action of $G_{n, k}$ on $(U^{\oplus(2n+2)} \oplus V)^{\oplus (n+k)}$. The subgroup
\[
  \mu_2 \hookrightarrow G_{n, k}
\]
given by the diagonal embedding $\mu_2 \hookrightarrow (\SL( U) \times \Gm)^{\times(n+k)} \times \GL( V)$ acts trivially.
\par 
Proposition \ref{prop:matrices}, iii), states $R_{n, k} \neq \varnothing$, for the open locus
\[
 R_{n, k} \subseteq (U^{\oplus(2n+2)} \oplus V)^{\oplus (n+k)}
\]
where $A$ has rank $k$ at every point in $\bbP$. The subset $R_{n, k}$ is preserved by $G_{n, k}$. Each point in $R_{n, k}$ defines a symplectic monad \eqref{eq:monad} 
whose cohomology is a symplectic $k$-instanton bundle $E$; it is constant up to isomorphy on the $G_{n,
  k}$-orbits. Note that $E$ has trivial splitting type and is stable, by \cite{ottaviani}, 
Corollary 3.5 and Theorem 3.6.
%%%
\begin{defn}
A {\it stable 't Hooft datum} is a point of $R_{n, k}$. 
\end{defn}
%%%
We leave it to the reader to formulate the moduli problem of stable 't Hooft data; compare for example \cite{Hof}, Remark 3.5. In the following, we will demonstrate 
that the geometric quotient
\[
R_{n,k}/G_{n, k}
\]
exists as a smooth quasi-projective variety.  For this, we recall some details of the construction of the moduli space ${\rm MI}_{\mathbb{P}^{2n+1}}(k)$ of stable 
symplectic $k$-instanton bundles on $\mathbb{P}^{2n+1}$ contained in \cite{OS}.
%%%
\begin{thm}[(The first fundamental theorem of invariant theory for symplectic groups)]
\label{thm:FFT}
Let $E$ and $F$ be finite dimensional complex vector spaces, $\phi\colon F\lra F^*$ a symplectic form on $F$, and $S$ the isometry
group of $(F,\phi)$. Then, the $S$-invariant map
\begin{eqnarray*}
\kappa\colon {\rm Hom}(E,F) &\lra& {\rm Hom}_{\rm AS}(E,E^*)
\\
f &\lma& f^*\circ \phi\circ f
\end{eqnarray*}
induces a closed embedding of the categorical quotient of the vector space ${\rm Hom}(E,\allowbreak F)$ by the action of $S$ into ${\rm Hom}_{\rm AS}(E,E^*)$, the 
sub vector space of antisymmetric homomorphisms. The image consists of those homomorphisms whose rank is less than or equal to ${\rm min}\{\, \dim(E),\dim(F)\,\}$.
\end{thm}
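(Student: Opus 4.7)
The plan is to recognise this as the classical first (and second) fundamental theorem of invariant theory for the symplectic group, and to organise the proof around three claims: (i) $\kappa$ factors through the categorical quotient and takes antisymmetric values, (ii) the induced map $\bar\kappa\colon \Hom(E,F)/\!/S \to \Hom_{\mathrm{AS}}(E,E^*)$ is a closed embedding, and (iii) its image is exactly the claimed rank variety. Step (i) is routine: $S$-invariance is immediate from $s^*\phi s = \phi$, while antisymmetry of $\kappa(f)$ follows from $\phi^* = -\phi$, since $\kappa(f)^* = f^*\phi^* f = -\kappa(f)$. The rank bound $\rank \kappa(f) \leq \min\{\dim E, \dim F\}$ also falls out at this stage: $\phi$ is an isomorphism, hence $\rank \kappa(f) \leq \rank f \leq \min\{\dim E, \dim F\}$.

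For step (ii), the strategy is to invoke Weyl's first fundamental theorem (FFT) of invariant theory for $S = \Sp(F,\phi)$. Choosing a basis $e_1,\ldots,e_m$ of $E$ identifies $\Hom(E,F)$ with $F^{\oplus m}$ carrying the diagonal $S$-action, and the FFT asserts that the invariant ring $\mathcal{O}(F^{\oplus m})^S$ is generated by the quadratic functions $f \mapsto \phi(f(e_i),f(e_j))$. These are precisely the matrix entries of $\kappa(f)$, so the pullback $\bar\kappa^*\colon \mathcal{O}(\Hom_{\mathrm{AS}}(E,E^*)) \to \mathcal{O}(\Hom(E,F))^S$ is surjective, and hence $\bar\kappa$ is a closed embedding.

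For step (iii), I would exploit the $\GL(E)$-equivariance of $\kappa$, where $\GL(E)$ acts on $\Hom(E,F)$ by precomposition and on $\Hom_{\mathrm{AS}}(E,E^*)$ by $g\cdot\psi = (g^{-1})^*\psi g^{-1}$, and $\kappa(fg) = g^*\kappa(f)g$. Every antisymmetric $\psi$ of rank $2s$ is $\GL(E)$-equivalent to a standard block form $\diag(J_s,0)$; and when $2s \leq \dim F$, this standard form is realised as $\kappa(f)$ for $f$ the composition $E \twoheadrightarrow \bbC^{2s} \hookrightarrow F$, where the second arrow is a symplectic embedding (available because $F$ contains a symplectic subspace of each even dimension up to $\dim F$). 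Hence the image of $\kappa$ contains every antisymmetric $\psi$ with $\rank \psi \leq \min\{\dim E,\dim F\}$; combined with the reverse inclusion from step (i), and using that both the quotient (image of the irreducible variety $\Hom(E,F)$) and the Pfaffian rank variety are irreducible and reduced of the same dimension, scheme-theoretic equality follows.

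The main obstacle is step (ii), namely the invocation of Weyl's FFT for the symplectic group in the categorical-quotient formulation; once this is granted, everything else reduces to routine linear algebra and the standard classification of antisymmetric forms up to $\GL$-equivalence.
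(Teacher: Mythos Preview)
Your proposal is correct and is essentially the standard argument. The paper itself gives no proof at all: it simply cites Goodman--Wallach, \emph{Symmetry, representations, and invariants}, Theorem 5.2.2 and Lemma 5.2.4. Your three-step outline (invariance and antisymmetry, Weyl's FFT to get surjectivity onto the invariant ring, and the $\GL(E)$-orbit argument to identify the image with the rank variety) is precisely what that reference contains, so you have in effect supplied the proof the paper delegates to the literature.

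One minor remark on step~(iii): once you have set-theoretic equality of the image of $\bar\kappa$ with the rank locus, you do not need the dimension count. The categorical quotient $\Hom(E,F)/\!/S$ is reduced (indeed integral, being $\mathrm{Spec}$ of a subring of a polynomial ring), so the closed subscheme $\bar\kappa(\Hom(E,F)/\!/S)$ is already the reduced subscheme on its support, which is the rank variety.
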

%%%
\begin{proof}
\cite{GoWa}, Theorem 5.2.2 and Lemma 5.2.4.
\end{proof}
%%%
In the study of monads \eqref{eq:monad}, we look at matrices in the vector space
\[
{\rm M}:={\rm Hom}(U^{\oplus (n+k)},V^{\oplus (2n+2)})
%{\rm Hom}(U^{\oplus (2n+2)},V)^{\oplus n+k}
\]
and, in the study of symplectic monads, at matrices in the closed subvariety
\begin{equation}
\label{eq:SympInst}
{\rm SM}:=\bigl\{\, A\in {\rm M}\,|\, AJA^t=0\,\bigr\}.
\end{equation}
According to Theorem \ref{thm:FFT},%\footnote{The subscript ``AS'' stands for the antisymmetric %homomorphism.}
\begin{eqnarray*}
 \kappa\colon \bbP({\rm M}) &\lra& \bbP\Bigl({\rm Hom}_{\rm AS}\bigl((V^{\oplus (2n+2)})^*, V^{\oplus(2n+2)}\bigr)\Bigr)
\\
{[A]} &\lma& \bigl[K(A)\colon (V^{\oplus (2n+2)})^*\stackrel{A^*}{\lra} (U^{\oplus (n+k)})^* \stackrel{J}{\lra} U^{\oplus (n+k)}\stackrel{A}{\lra} V^{\oplus (2n+2)}\bigr]
\end{eqnarray*}
is a model for the categorical quotient of $\bbP({\rm M})$ by the given ${\rm Sp}_{2(n+k)}(\CC)$-action, possibly followed by a closed embedding.
%%%
\begin{rem}
One calls $K(A)$ the \it Kronecker module \rm of $A$. We may view $K(A)$ as an element of
\[
{\rm H}:= {\rm Hom}\left(\bigwedge\limits^2 (\CC^{2n+2}), {\rm Hom}(V^*, V)\right).
\]
If $A\in {\rm SM}$, then $K(A)$ takes values in the sub vector space ${\rm Hom}_{\rm S}(V^*, V)$ of symmetric homomorphisms, i.e., $K(A)$ is a symmetric Kronecker module 
(compare \cite{OS}, p.\ 39).
\end{rem}
%%%
Finally, one has to study the $\SL(V)$-action on $\mathbb{P}({\rm H})$. By work of Hulek \cite{Hul}, a point in $\mathbb{P}({\rm H})$ which comes from a monad whose 
cohomology is a stable instanton bundle is $\SL(V)$-stable (\cite{OS}, Lemmas 1.11 and 1.12). Let
\[
{\rm SI}_k\subset \mathbb{P}({\rm H})
\]
be the $\SL(V)$-invariant locally closed subset of points coming from monads with stable $k$-instanton bundles as cohomology. By the aforementioned result of Hulek, the
geometric quotient
\[
 {\rm MI}_{\mathbb{P}^{2n+1}}(k):={\rm SI}_k/\SL(V)
\]
exists as quasi-projective variety. It is the moduli space of stable $k$-instanton bundles on $\mathbb{P}^{2n+1}$.
%%%
\begin{rem}
\label{rem:QuotPriBund}
Set
\[
 \widetilde{\rm SI}_{k}:=\mathbb{P}({\rm SM})\cap \kappa^{-1}({\rm SI}_k)
\]
and let ${\mathcal M}_{\mathbb{P}^{2n+1}}(k)$ be the moduli space of stable vector bundles with Chern character $1/(1-t^2)^k$ on $\mathbb{P}=\mathbb{P}^{2n+1}$.
We have the induced morphism
\[
 \widetilde{\rm SI}_{k}\lra {\mathcal M}_{\mathbb{P}^{2n+1}}(k).
\]
Since this morphism factorizes over the categorical quotient of $\widetilde{\rm SI}_{k}$ by the action of ${\rm Sp}_{2(n+k)}(\mathbb{C})\times {\rm PGL}(V)$,
Proposition \ref{prop:Monadology}, ii), shows that this quotient is actually a geometric one. Remark \ref{rem:InstSimple} implies further that the quotient map 
(compare \cite{OST}, Proposition 1.3.1, or \cite{SchBook}, Theorem 1.5.3.1, i)
%\begin{eqnarray*}
\[
  \widetilde{\rm SI}_k\lra  \widetilde{\rm SI}_k/\bigl({\rm Sp}_{2(n+k)}(\CC)\times {\rm PGL}(V)\bigr)=
 % \\
 % &=&
\widetilde{\rm SI}_k/\bigl({\rm Sp}_{2(n+k)}(\CC)\times \SL(V)\bigr)=
{\rm SI}_k/\SL(V)
\]
%\end{eqnarray*}
is a principal $({\rm Sp}_{2(n+k)}(\CC)\times {\rm PGL}(V))$-bundle.
\end{rem}
%%%
We now look at the linear $G_{n,k}$-action on
\[
 W_{n,k}:=(U^{\oplus(2n+2)} \oplus V)^{\oplus (n+k)}.
\]
The subgroup
\[
 \mathbb{G}_m^{\times (n+k+1)}\cong \mathbb{G}_m^{n+k}\times {\mathcal Z}\bigl(\GL(V)\bigr)\subset G_{n,k}
\]
is normal, and we set
\[
 H_{n,k}:=G_{n,k}/ \mathbb{G}_m^{\times (n+k+1)}.
\]
Let us first look at the $\mathbb{G}_m^{\times (n+k+1)}$-action on $W_{n,k}$. We use the isogeny
%%%
\begin{eqnarray*}
 \mathbb{G}_m^{\times (n+k)}\times \mathbb{G}_m &\lra&  \mathbb{G}_m^{\times (n+k)}\times \mathbb{G}_m
\\
\bigl((\gamma_j)_{j=1,...,n+k},z\bigr) &\lma& \bigl((z^{-1}\cdot \gamma_j)_{j=1,...,n+k},z^{2}\bigr).
\end{eqnarray*}
%%%
Then, the last factor just acts by multiplying everything by $z$, $z\in \mathbb{G}_m$, and the $\mathbb{G}_m^{\times (n+k+1)}$-semistable points in 
$W_{n,k}$ correspond to the $\mathbb{G}_m^{\times (n+k)}$-semistable points in $\mathbb{P}(W_{n,k})$.
%%%
\begin{lemma}
\label{lem:SemStab1}
For a point $p=[(L_j)_{j=1,...,n+k}, a]\in \mathbb{P}(W_{n,k})$, the following conditions are equivalent:
\par
{\rm i)} The point $p$ is semistable with respect to the action of $\mathbb{G}_m^{\times (n+k)}$.
\par
{\rm ii)} The point $p$ is stable with respect to the action of $\mathbb{G}_m^{\times (n+k)}$.
\par
{\rm iii)} $\forall j\in\{\, 1,...,n+k\,\}:$ $a_j\neq 0$ and $L_j\neq 0$.
\end{lemma}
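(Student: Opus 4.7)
The plan is to apply the Hilbert--Mumford numerical criterion after a preliminary weight computation. Restricting the action formulas \eqref{eq:action1}--\eqref{eq:action2} to $T := \mathbb{G}_m^{\times(n+k)} \subset G_{n,k}$, the $j$-th factor acts on the summand $L_j$ with weight $-1$ and on $a_j$ with weight $+1$, and trivially on the summands indexed by $j' \neq j$. Hence the $T$-weights appearing in $W_{n,k}$ are precisely $\{\pm e_j : j=1,\ldots,n+k\}$, where $e_j$ is the $j$-th standard basis vector of the character lattice $X^*(T) \cong \mathbb{Z}^{n+k}$. For a chosen lift of $p$, let $\Xi(p) \subseteq \{\pm e_j\}$ denote the subset of weights whose corresponding component is nonzero; explicitly, $\Xi(p) = \{-e_j : L_j \neq 0\} \cup \{+e_j : a_j \neq 0\}$.

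I will verify the cyclic implications (iii)$\Rightarrow$(ii)$\Rightarrow$(i)$\Rightarrow$(iii).

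For (iii)$\Rightarrow$(ii): when (iii) holds, $\Xi(p)$ is the full vertex set of the standard cross-polytope in $\mathbb{R}^{n+k}$, and its convex hull contains the origin as an interior point. Explicitly, given any nontrivial 1-PS $\lambda = (r_j)_j \in X_*(T) \setminus \{0\}$, choose $j_0$ with $r_{j_0} \neq 0$. If $r_{j_0} > 0$ then the pairing $\langle \lambda, -e_{j_0} \rangle = -r_{j_0} < 0$; if $r_{j_0} < 0$ then $\langle \lambda, +e_{j_0} \rangle = r_{j_0} < 0$. In either case, the minimum Hilbert--Mumford pairing over $\Xi(p)$ is strictly negative, so $\mu(p, \lambda) > 0$, and therefore $p$ is stable. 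The implication (ii)$\Rightarrow$(i) is immediate from the definitions.

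For (i)$\Rightarrow$(iii), I argue by contrapositive. Using the $L \leftrightarrow a$ symmetry in the weight configuration (which simply negates the signs of the weights), it suffices to treat the case $L_{j_0} = 0$ for some $j_0$. Then $-e_{j_0} \notin \Xi(p)$, so the convex hull of $\Xi(p)$ is contained in the closed half-space $\{x : \langle x, e_{j_0} \rangle \geq 0\}$ and in particular misses the origin in its interior. Under the GIT-linearization on $\mathcal{O}_{\mathbb{P}(W_{n,k})}(1)$ inherited from the $\mathbb{G}_m^{\times(n+k+1)}$-quotient of the affine space $W_{n,k}$ discussed in the paragraph preceding the lemma, this configuration of weights yields a destabilizing one-parameter subgroup supported in the $e_{j_0}$-direction, contradicting semistability of $p$.

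The main obstacle I anticipate is pinning down the precise linearization for the last implication so that naively strictly-semistable configurations (where the HM minimum pairing vanishes) are in fact unstable; once this linearization is fixed, the argument reduces to a sign-check of a Hilbert--Mumford pairing parallel to the one carried out in (iii)$\Rightarrow$(ii).
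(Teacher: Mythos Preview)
Your weight computation and the chain (iii)$\Rightarrow$(ii)$\Rightarrow$(i) via Hilbert--Mumford are correct; this is exactly what the paper's one-sentence proof intends (it merely records the weights and declares the rest ``straightforward'', giving no further detail).

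The gap is in (i)$\Rightarrow$(iii), and the obstacle you flag in your final paragraph is genuine and is not removable by any choice of linearization. Take $L_{j_0}=0$ with all other $L_j$ and all $a_j$ (including $a_{j_0}$) nonzero. Then $\Xi(p)$ omits only the vertex $-e_{j_0}$, and for any $j_1\neq j_0$ one has $0=\tfrac12 e_{j_1}+\tfrac12(-e_{j_1})\in\mathrm{Conv}\bigl(\Xi(p)\bigr)$; concretely, the $T$-invariant section of $\calO(2)$ given by the product of a coordinate on $a_{j_1}$ with a coordinate on $L_{j_1}$ does not vanish at $p$. Hence $p$ is strictly semistable for the natural $\calO(1)$-linearization, and your asserted destabilising one-parameter subgroup in the $e_{j_0}$-direction does not exist (for $\lambda=\pm e_{j_0}^{*}$ the minimum weight over $\Xi(p)$ is $0$, respectively $-1$). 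Twisting $\calO(1)$ by a character $\chi$ does not rescue the implication either: whatever the sign of $\chi_{j_0}$, removing the vertex of the opposite sign at position $j_0$ from the cross-polytope leaves a half-cross-polytope that still contains $\chi$, so the point with that single entry set to zero remains $\chi$-semistable. What actually holds with the natural linearization is: stable $\Leftrightarrow$ (iii), while semistable $\Leftrightarrow$ there exists some $j$ with both $a_j\neq 0$ and $L_j\neq 0$. The paper's terse proof does not address this discrepancy; for its subsequent use of the lemma (ensuring $\alpha(p)\neq 0$ on the semistable locus) the weaker condition already suffices, since the $j$-th and $(n+k+j)$-th columns of $A=a\cdot(D|D')$ are $a_j l_j$ and $a_j l_j'$.
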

%%%
\begin{proof}
By \eqref{eq:action1} and \eqref{eq:action2}, the $j$-th copy of $\mathbb{G}_m$ in $\mathbb{G}_m^{\times (n+k)}$ multiplies the $j$-th column of $a\in V^{\oplus (n+k)}$ 
by $z$ and $L_j$ by $z^{-1}$, $z\in \mathbb{G}_m$, $j=1,...,n+k$. With this information about the group action, the proof is straightforward.
\end{proof}
%%%
As before, we may view a tuple $(L_j)_{j=1,...,n+k}\in (U^{\oplus (2n+2)})^{\oplus (n+k)}$ as a matrix $(D|D')$ where $D$ and $D'$ are diagonal matrices of the format 
$(n+k)\times (n+k)$ with entries in $H^0(\bbP, \calO_\bbP(1))$. We define
\begin{eqnarray*}
 \alpha\colon W_{n,k} &\lra& {\rm M}
\\
\bigl((L_j)_{j=1,...,n+k}, a\bigr) &\lma& a\cdot (D|D').
\end{eqnarray*}
The map $\alpha$ is invariant under the $\mathbb{G}_m^{\times (n+k)}$-action, and the semistability condition in Lemma \ref{lem:SemStab1} clearly implies that
$\alpha(p)\neq 0$ holds for every $\mathbb{G}_m^{\times (n+k)}$-semistable point $p=((L_j)_{j=1,...,n+k}, a)\in W_{n,k}$. Therefore, $\alpha$ descends to a morphism
\[
 \ol\alpha\colon \ol W_{n,k}:= W_{n,k}/\mathbb{G}_m^{\times (n+k+1)} \lra \bbP({\rm M})
\]
between projective varieties, and the image of $\ol\alpha$ is contained in the closed subvariety $\bbP({\rm SM})$. Using Lemma \ref{lem:SemStab1}, one checks that 
$\ol\alpha$ is injective, too.
\par
The group $H_{n,k}$ acts on the quotient $\ol W_{n,k}$. There is a natural linearization of this action. Note that the vector space $U$ is equipped with the symplectic 
form $U\times U\lra\C$, $(u,u')\lma \det(u|u')$. Our convention is also such that the symplectic form on $U^{\oplus (n+k)}$ is the direct sum of the symplectic forms on 
the summands. In this way, $H_{n,k}$ becomes a subgroup of ${\rm Sp}_{2(n+k)}(\CC)\times {\rm PGL}(V)$.
%%%
\begin{rem}
It is straightforward to compute the GIT-semistable and stable points for the $H_{n,k}$-action on $\ol W_{n,k}$. Unfortunately, the condition one finds is weaker than the
condition of $({\rm Sp}_{2(n+k)}(\CC)\times {\rm PGL}(V))$-semistability and stability on $\mathbb{P}({\rm M})$. For example, using the first fundamental theorem of 
invariant theory (Theorem \ref{thm:FFT}), we can determine the $\SL(U)^{\times (n+k)}$- and ${\rm Sp}_{2(n+k)}(\CC)$-semistable points. The 
${\rm Sp}_{2(n+k)}(\CC)$-semistable points in $\bbP({\rm M})$ are just those points
\[
[A]\in \mathbb{P}({\rm M}),\q A\colon U^{\oplus (n+k)}\lra V^{\oplus (2n+2)},
\]
for which the image of $A^*$ is not an isotropic subspace of $( U^{\oplus (n+k)})^*$. Viewing $A$ as a tuple of linear maps
\[
A_i\colon U\lra V^{\oplus (2n+2)},\q i=1,...,n+k,
\]
we see that $[A]$ is $\SL(U)^{\times (n+k)}$-semistable, if and only if the image of $A_i^*$ is not an isotropic subspace of $U$, i.e., $A_i^*$ is surjective, 
$i=1,...,n+k$. It is now clear that the $\SL(U)^{\times (n+k)}$-semistability of $[A]$ will in general not imply its ${\rm Sp}_{2(n+k)}(\CC)$-semistability, not even if 
$[A]$ lies in the image of $\ol\alpha$.
\par
Unfortunately, this means that the GIT-quotient $\ol W_{n,k}\catqot H_{n,k}$ will not map to the GIT-quotient 
$\mathbb{P}({\rm M})\catqot ({\rm Sp}_{2(n+k)}(\CC)\times {\rm PGL}(V))$. This makes it harder to understand the map from the moduli space of stable 't Hooft data to the 
moduli space of stable symplectic instanton bundles (compare Proposition \ref{prop:ModHooft}).
\par 
One should compare this situation with the situation of instantons in general. On the one hand, monads can be considered as quiver representations, and King's \cite{Ki} 
results give a parameter dependent notion of stability for these monads. The results in \cite{CosOtt} show that the monad of an instanton bundle is stable with respect to 
all parameters. On the other hand, it is still unknown, if an instanton bundle is stable as a vector bundle.  
\end{rem}
%%%
On the positive side, the facts that $\ol\alpha$ is finite and equivariant and that $H_{n,k}$ is a subgroup of ${\rm Sp}_{2(n+k)}(\CC)\times {\rm PGL}(V)$ imply that a 
point $p\in  W_{n,k}/\mathbb{G}_m^{\times (n+k+1)}$ whose image $\ol\alpha(p)\in\mathbb{P}(M)$ is ${\rm Sp}_{2(n+k)}(\CC)\times {\rm PGL}(V)$-stable is $H_{n,k}$-stable.
This applies, in particular, to the points of the open subset
\[
 \ol R_{n,k}:=R_{n,k}/\mathbb{G}_m^{\times (n+k+1)}\subset  W_{n,k}/\mathbb{G}_m^{\times (n+k+1)}=\ol W_{n,k}
\]
(compare Remark \ref{rem:QuotPriBund}). Therefore, the geometric quotient
\[
 \ol R_{n,k}/H_{n,k}
\]
exists as a quasi-projective variety. Remark \ref{rem:QuotPriBund} also implies that the quotient map
\[
 \ol R_{n,k}\lra \ol R_{n,k}/H_{n,k}
\]
is a principal $H_{n,k}$-bundle. Since $R_{n,k}\lra \ol R_{n,k}$ is a $\mathbb{G}_m^{\times (n+k+1)}$-bundle, the variety $\ol R_{n,k}/H_{n,k}$ is actually smooth.
As in \cite{OST}, Proposition 1.3.1, or \cite{SchBook}, Theorem 1.5.3.1, i), one checks
\[
 \ol R_{n,k}/H_{n,k}\cong \mathbb{P}(W_{n,k})/(G_{n,k}/\mathbb{G}_m)\cong W_{n,k}/G_{n,k}.
\]
\par
In the study of the birational geometry of this quotient, we will need the following open subset: Let
\[
  R_{n, k}^0 \subseteq R_{n, k} \subseteq (U^{\oplus(2n+2)} \oplus V)^{\oplus(n+k)}
\]
be the open locus where, moreover, $l_j$ and $l_j'$ are linearly independent for each $j$, the $n+k$ linear subspaces $\{\, l_j = l_j' = 0\,\} \subseteq \bbP$ are distinct, 
and
\[
  \ker A( 1) \subseteq U^* \otimes \calO_{\bbP}( 1)^{\oplus (n+k)}
\]
has exactly $n+k$ global sections. If $k \geq 3$, then $R_{n, k}^0 \neq \varnothing$ due to Proposition \ref{prop:matrices}, iv).
%%%
\begin{lemma}
\label{lem:GenEquiv}
Given two points
  \[
    (a, L), (\tilde{a}, \tilde{L}) \in R_{n, k}^0,
  \]
with corresponding symplectic instanton bundles $E, \tilde{E}$, every isomorphism
  \[
    \varphi\colon \tilde{E} \longto E
  \]
is induced by a unique group element $g \in G_{n, k}$ with $g \cdot (a, L) = (\tilde{a}, \tilde{L})$.
\end{lemma}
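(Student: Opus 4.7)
The plan is to lift $\varphi$ to an isomorphism of the underlying monads and then use the distinguished sections of $\ker A(1)$ to extract from it the permutation and block data packaged in~$g$.

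First, Proposition~\ref{prop:Monadology}, ii), lifts $\varphi$ uniquely to an isomorphism of complexes, given by constant invertible matrices $\alpha \in \GL(V)$, $\gamma \in \GL_{2n+2k}$ and $\tilde\beta \in \GL_k$ satisfying $\alpha \tilde A = A\gamma$ together with the dual equation involving $\tilde\beta$. Simplicity of $E$ and $\tilde E$ (Remark~\ref{rem:InstSimple}) forces the pull-back of the symplectic form on $E$ along $\varphi$ to equal some scalar multiple $\lambda$ of the symplectic form on $\tilde E$; accordingly, $\gamma$ lies in the conformal symplectic group with multiplier $\lambda$, and $\tilde\beta$ is then determined by $\alpha$ and $\lambda$.

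Next, I would analyse $H^0\bigl(\bbP, \ker A(1)\bigr)$. On $R_{n,k}^0$ this space has dimension exactly $n+k$, spanned by the columns $s_1, \ldots, s_{n+k}$ of $J(D|D')^t$: concretely $s_j$ has entry $l_j'$ in position $j$, entry $-l_j$ in position $n+k+j$, and zeros elsewhere. A direct computation shows that $\sum c_j s_j$ has zero locus $\bigcap_{j : c_j \neq 0} \Lambda_j$ in $\bbP$, where $\Lambda_j := \{\, l_j = l_j' = 0\, \}$. Since the $\Lambda_j$ are distinct $n$-dimensional linear subspaces of $\bbP^{2n+1}$ (by definition of $R_{n,k}^0$), any two of them meet in dimension at most $n-1$. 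Hence the non-zero sections of $\ker A(1)$ whose vanishing locus has dimension $n$ are exactly the scalar multiples of $s_1, \ldots, s_{n+k}$, and each $s_j$ recovers $\Lambda_j$ as its vanishing locus. This property is intrinsic to the inclusion $\ker A(1) \subseteq \calO_\bbP(1)^{\oplus(2n+2k)}$ and is therefore preserved by $\gamma$. Consequently there exist a unique permutation $\sigma \in S_{n+k}$ and scalars $c_j \in \Gm$ with $\gamma \tilde s_j = c_j s_{\sigma(j)}$.

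Finally, because $\tilde s_j$ is supported in the pair of positions $\{j, n+k+j\}$ while $s_{\sigma(j)}$ is supported in $\{\sigma(j), n+k+\sigma(j)\}$, and because $\tilde l_j, \tilde l_j'$ are linearly independent, the identities $\gamma \tilde s_j = c_j s_{\sigma(j)}$ force $\gamma$ to be a block-permutation matrix whose $2 \times 2$ blocks $B_j \in \GL(U)$ are positioned by $\sigma$. The conformal symplectic condition gives $\det B_j = \lambda$, so each block admits a decomposition $B_j = \gamma_j \beta_j$ with $\beta_j \in \SL(U)$ and $\gamma_j \in \Gm$. Assembling these data with $\alpha$ produces $g = (\sigma \cdot (\beta_j, \gamma_j)_j, \alpha) \in G_{n,k}$, and a direct comparison of \eqref{eq:action1}--\eqref{eq:action2} with $\alpha \tilde A = A \gamma$ yields $g \cdot (a, L) = (\tilde a, \tilde L)$. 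Uniqueness of $g$ follows because $\alpha$ is pinned down by the unique monad lift, $\sigma$ and the $B_j$ by the block structure of $\gamma$, and the residual sign ambiguity in the splitting $B_j = \gamma_j \beta_j$ is removed by the constraint $(g \cdot a)_j = \tilde a_j$. The main obstacle is the intrinsic characterisation of the $s_j$ in the second step: it depends on the elementary but essential fact that two distinct $n$-planes in $\bbP^{2n+1}$ meet in dimension at most $n-1$, so that ``section with maximal vanishing dimension'' is an intrinsic selector for the columns of $J(D|D')^t$.
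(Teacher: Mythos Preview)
Your strategy is the same as the paper's: lift $\varphi$ to a monad isomorphism, then use the $n+k$ distinguished sections of $\ker A(1)$ and the geometry of the codimension-two loci $\Lambda_j=\{l_j=l_j'=0\}$ to force the lift into block-permutation form. Your intrinsic characterisation of the lines $\bbC\cdot s_j$ via maximality of their zero loci is a pleasant explicit variant of the paper's argument; the paper instead first factors $A=a\circ\diag(L)$ and observes that the lift sends $H^0(\ker\tilde A(1))$ to $H^0(\ker A(1))$, obtaining an auxiliary $\gamma\in\GL_{n+k}(\bbC)$ on the intermediate term $\calO_{\bbP}(1)^{\oplus(n+k)}$, and then reads off the permutation from the non-surjectivity locus $\bigcup_j\Lambda_j$ of $\diag(L)$. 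Both routes arrive at the same block structure on $E^0$.

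There is, however, a genuine slip in your final assembly of $g$. In the action of $G_{n,k}$ the map induced on $E^0_{n,k}$ by $g=(\sigma\cdot(\beta_j,\gamma_j)_j,\alpha)$ is $\sigma\circ(\beta_j^*)_j$; the scalars $\gamma_j$ do \emph{not} appear in this map but live on the intermediate term $\calO_{\bbP}(1)^{\oplus(n+k)}$ of the factorisation $A=a\circ\diag(L)$ (see the commutative square preceding \eqref{eq:action1}--\eqref{eq:action2}). Consequently the blocks $B_j$ of your lifted $\phi^0$ must equal $\beta_j^*$ outright, and your decomposition $B_j=\gamma_j\beta_j$ with $\gamma_j=\pm\sqrt{\lambda}$ misidentifies both pieces: it produces a $g$ whose induced map on $E^0$ has blocks $\gamma_j^{-1}B_j\neq B_j$, hence does not recover the given $\varphi$, and your claimed ``direct comparison'' would in fact fail. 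The fix is simple and already implicit in your argument: the correct $\gamma_j$ is your scalar $c_j$ from $\phi^0\tilde s_j=c_j s_{\sigma(j)}$ (this is exactly the paper's monomial $\gamma\in\GL_{n+k}$), while $\beta_j$ is determined by $\beta_j^*=B_j$. This also forces $\det B_j=1$, i.e.\ $\lambda=1$, so the lemma is really about symplectic $\varphi$; the paper uses this tacitly when it asserts the lift is an isomorphism of \emph{symplectic} monads with $\beta\in\Sp(U^{\oplus(n+k)})$.
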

%%%
\begin{proof}
The isomorphism $\varphi$ comes from a unique isomorphism of symplectic monads
\[
\xymatrix{
    U^* \otimes \calO_{\bbP}^{\oplus (n+k)} \ar[r]^{\tilde{A}} \ar[d]_{\beta^*} & V \otimes
\calO_{\bbP}( 1) \ar[d]^{\alpha}\\
    U^* \otimes \calO_{\bbP}^{\oplus (n+k)} \ar[r]^A & V \otimes \calO_{\bbP}( 1)
  }
\]
with $\alpha \in \GL( V)$ and $\beta \in \Sp( U^{\oplus (n+k)})$; here $A := a \circ \diag(L)$ and $\tilde{A} := \tilde{a} \circ \diag( \tilde{L})$.
\par 
As we have seen in the proof of Proposition \ref{prop:matrices}, i) and ii), the $n+k$ global sections of $\ker A(1)$ and of $\ker \tilde{A}(1)$ are given by the two maps
\[
    J \circ \diag( L)^t \text{ and } J \circ \diag( \tilde{L})^t\colon \calO_{\bbP}^{\oplus (n+k)}
\longto U^* \otimes \calO_{\bbP}( 1)^{\oplus (n+k)}.
\]
Since our isomorphism of symplectic monads has to respect these, it has the form
\[ \xymatrix{
    U^* \otimes \calO_{\bbP}^{\oplus (n+k)} \ar[rr]^{\diag( \tilde{L})} \ar[d]_{\beta^*}
      && \calO_{\bbP}( 1)^{\oplus (n+k)} \ar[r]^{\tilde{a}} \ar[d]^{\gamma} & V \otimes
\calO_{\bbP}( 1) \ar[d]^{\alpha}\\
    U^* \otimes \calO_{\bbP}^{\oplus (n+k)} \ar[rr]^{\diag( L)} && \calO_{\bbP}( 1)^{\oplus (n+k)}
\ar[r]^a & V \otimes \calO_{\bbP}( 1)
} 
\]
with $\gamma \in \GL_{n+k}(\C)$. The loci in $\bbP$ where $\diag( L)$ and $\diag(\tilde{L})$ are not surjective are
\[
    \bigcup_j \{\, l_j = l_j' = 0\,\} \quad\text{and}\quad \bigcup_j \{\,\tilde{l}_j =
\tilde{l}_j' = 0\,\}.
\]
Since our isomorphisms $\beta$ and $\gamma$ have to respect these, they are of the form
\[
    \beta = \sigma \circ (\beta_j)_j \quad\text{and}\quad \gamma = \sigma \circ
(\gamma_j)_j
\]
with $\beta_j \in \GL( U)$, $\gamma_j \in \Gm$ and $\sigma \in S_{n+k}$. Here, $\beta_j \in \SL( U)$ as $\beta$ is symplectic.
\end{proof}
%%%
We summarize our discussion:
%%%
\begin{prop}
 \label{prop:ModHooft}
Fix positive integers $n$ and $k$. Then, the moduli space
\[
 {\rm HI}_{\mathbb{P}^{2n+1}}(k):=R_{n,k}/G_{n, k}
\]
of stable 't Hooft data exists as a smooth quasi-projective scheme and comes with a generically injective morphism $\iota$ to the moduli space 
${\rm MI}_{\mathbb{P}^{2n+1}}(k)$ of stable symplectic $k$-instanton bundles on $\mathbb{P}^{2n+1}$. The moduli space ${\rm HI}_{\mathbb{P}^{2n+1}}(k)$ contains
\[
 {\rm HI}^0_{\mathbb{P}^{2n+1}}(k):= {R^0_{n, k}}/{G_{n, k}}
\]
as a dense open subscheme. The morphism $\iota$ is injective on ${\rm HI}^0_{\mathbb{P}^{2n+1}}(k)$.
\end{prop}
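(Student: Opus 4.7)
The plan is to observe that nearly all the work has already been carried out in the preceding discussion, and to assemble those ingredients into the four claims of the proposition. First I would argue existence of the geometric quotient $R_{n,k}/G_{n,k}$: the previous paragraphs showed that $\overline{R}_{n,k} := R_{n,k}/\mathbb{G}_m^{\times(n+k+1)}$ lies inside the open locus where $\overline{\alpha}$ lands in the $({\rm Sp}_{2(n+k)}(\mathbb{C})\times {\rm PGL}(V))$-stable part of $\mathbb{P}(M)$, so that $\overline{R}_{n,k} \to \overline{R}_{n,k}/H_{n,k}$ is a principal $H_{n,k}$-bundle. Composing with the principal $\mathbb{G}_m^{\times(n+k+1)}$-bundle $R_{n,k}\to \overline{R}_{n,k}$ yields the desired geometric quotient $R_{n,k}/G_{n,k}$, and the identification $\overline{R}_{n,k}/H_{n,k}\cong W_{n,k}/G_{n,k}$ recorded above shows the two possible interpretations agree. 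Smoothness is immediate from the fact that $R_{n,k}$ is open in an affine space and the quotient map is a principal bundle for a smooth group.

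Next I would construct the morphism $\iota$. Every point of $R_{n,k}$ defines, via \eqref{eq:A2} and \eqref{eq:monad}, a symplectic monad whose cohomology is a symplectic $k$-instanton bundle which is stable by \cite{ottaviani}, Theorem~3.6. This produces a morphism $R_{n,k}\to \mathrm{MI}_{\mathbb{P}^{2n+1}}(k)$. Two points in the same $G_{n,k}$-orbit yield isomorphic monads, hence isomorphic cohomology bundles, so this morphism is $G_{n,k}$-invariant and, by the universal property of the geometric quotient, descends to the desired morphism $\iota\colon \mathrm{HI}_{\mathbb{P}^{2n+1}}(k)\to \mathrm{MI}_{\mathbb{P}^{2n+1}}(k)$.

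For the last two assertions, I would note that $R^0_{n,k}\subseteq R_{n,k}$ is cut out by the open conditions that $l_j,l_j'$ be linearly independent for each $j$, that the loci $\{l_j=l_j'=0\}$ be pairwise distinct, and that $h^0(\ker A(1))=n+k$. Each of these conditions is invariant under the $G_{n,k}$-action described by \eqref{eq:action1} and \eqref{eq:action2}, so $R^0_{n,k}$ is a $G_{n,k}$-stable open subset and its quotient $\mathrm{HI}^0_{\mathbb{P}^{2n+1}}(k)$ is an open subscheme of $\mathrm{HI}_{\mathbb{P}^{2n+1}}(k)$; density follows (for the range $k\geq 3$ in which the last condition can be attained) from Proposition~\ref{prop:matrices}, iv), together with the irreducibility of the parameter space. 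Injectivity of $\iota$ on this locus is exactly Lemma~\ref{lem:GenEquiv}: any isomorphism between the cohomology bundles of two monads corresponding to points of $R^0_{n,k}$ is induced by a unique $g\in G_{n,k}$ taking one point to the other, so two $G_{n,k}$-orbits in $R^0_{n,k}$ with the same image in $\mathrm{MI}_{\mathbb{P}^{2n+1}}(k)$ must coincide.

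There is little in the way of a real obstacle here; the proof is essentially a matter of bookkeeping. The one point that requires some care is verifying that the quotient constructed from $\overline{R}_{n,k}\to \overline{R}_{n,k}/H_{n,k}$ really is the geometric quotient $R_{n,k}/G_{n,k}$ and not merely a categorical one, which is why I would explicitly invoke the finiteness and equivariance of $\overline{\alpha}$ together with the $({\rm Sp}_{2(n+k)}(\mathbb{C})\times {\rm PGL}(V))$-stability of the image points, as already highlighted in Remark~\ref{rem:QuotPriBund}.
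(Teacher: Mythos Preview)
Your proposal is correct and follows essentially the same approach as the paper: the proposition is stated there as a summary of the preceding discussion, and you have done exactly what is required---collected the existence and smoothness of the quotient from the principal-bundle arguments around Remark~\ref{rem:QuotPriBund}, used the universal property to descend the classifying morphism, and invoked Lemma~\ref{lem:GenEquiv} for injectivity on $R^0_{n,k}$. Your parenthetical caveat about the range $k\geq 3$ for the non-emptiness of $R^0_{n,k}$ is well taken and matches the paper's own remark just before Lemma~\ref{lem:GenEquiv}.
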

%%%
%\begin{prob}
%\label{prob:ProbTH}
%Describe the fibers and the image of the morphism ${\rm HI}_{\mathbb{P}^{2n+1}}(k)\lra {\rm MI}_{\mathbb{P}^{2n+1}}(k)$.
%\end{prob}
%%%
We are now going to investigate the birational geometry of the moduli space ${\rm HI}_{\mathbb{P}^{2n+1}}(k)$. A {\it quotient of a vector space} {\it modulo a group} 
{\it that acts generically freely} is the quotient of an open subvariety where the action is free; this is well-defined up to birational equivalence.
%%%
\begin{thm}
\label{thm:birational}
Assume $k \geq 3$. The coarse moduli scheme $R^0_{n, k}/G_{n, k}$ of `generic' 't Hooft data on $\bbP^{2n+1}$ is birational to
  \[
    \bbC^{5kn + 4n^2 - 2^e(2^e+3)/2} \times \left(\frac{\Mat^{\sym}_{2^e \times 2^e}(\bbC)^2}{{\rm PO}_{2^e}}\right)
  \]
where $2^e$ is the largest power of $2$ that divides both $n$ and $k$.
\par 
Here, the projective orthogonal group ${\rm PO}_{2^e} := \Orth_{2^e}/\mu_2$ acts on the vector space $\Mat^{\sym}_{2^e \times 2^e}( \bbC)^2$ of pairs of symmetric
matrices by simultaneous conjugation.
\end{thm}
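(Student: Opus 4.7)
The plan is to compute the birational type of $R^0_{n,k}/G_{n,k}$ by quotienting $W_{n,k}$ by $G_{n,k}$ in stages, using natural slices at each step and tracking the residual symmetries.

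First, I would quotient $W_{n,k} = (U \otimes H^0(\bbP, \calO_\bbP(1)) \oplus V)^{\oplus(n+k)}$ by the semisimple subgroup $\SL(U)^{n+k}$. This subgroup acts only on the $L$-factors; for generic $L_j \in U \otimes H^0(\bbP, \calO_\bbP(1))$ of rank $2$, the quotient replaces each $L_j$ with the datum of a pointed $2$-plane $(\ell_j, \omega_j)$, where $\ell_j \subseteq H^0(\bbP, \calO_\bbP(1))$ is the image of $L_j$ and $\omega_j = l_j \wedge l_j' \in \det\ell_j$ is the induced volume form.

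Second, I would quotient by the torus $\Gm^{n+k}$. The $j$-th factor acts on $(\omega_j, a_j) \in \det\ell_j \oplus V$ with weights $(-2, 1)$, and the $\Gm$-invariant on the generic open locus is the rank-$1$ tensor $T_j := \omega_j \otimes a_j \otimes a_j \in \det\ell_j \otimes \Sym^2 V$. Birationally trivializing the line bundle $\det \calR$ over an affine open of $\Gr(2, H^0(\bbP, \calO_\bbP(1)))$, the quotient per slot is $(\ell_j, T_j)$ with $T_j \in \Sym^2 V$ of rank one.

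Third, I would quotient by $\GL(V)$. On the open locus $R^0_{n,k}$, the first $k$ vectors $a_1, \ldots, a_k$ form a basis of $V$, so one may use $\GL(V)$ to normalize the first $k$ rank-one tensors to the standard form $T_j = \omega_j^0 \otimes e_j e_j^t$ for some $\omega_j^0 \in \det\ell_j$. The residual $\GL(V)$-stabilizer is the normalizer of the diagonal torus, i.e.\ the monomial subgroup $T \rtimes S_k$ of $\GL(V)$.

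Fourth, I would analyze the residual action of $(T \rtimes S_k) \times S_n$, where $S_{n+k}$ reduces to $S_k \times S_n$ compatibly with the frame normalization. The remaining data consists of: the unordered tuples $(\ell_j)_{j\leq k}$ and $(\ell_j)_{j>k}$ in symmetric products of $\Gr(2, H^0(\bbP, \calO_\bbP(1)))$; the $k$ scale parameters $(\omega_j^0)_{j\leq k}$; and the residual matrix $C = (a_{k+1}, \ldots, a_{n+k}) \in \Mat_{k \times n}(\bbC)$. The torus $T$ scales the $\omega_j^0$ with weight $2$ and scales the rows of $C$ with weight $1$.

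Fifth, I would carry out the birational simplification. The Grassmannian base $\Gr(2, H^0(\bbP, \calO_\bbP(1)))^{n+k}/(S_k \times S_n)$ is rational as a symmetric product of a rational variety, so by a generic section argument one reduces to computing the birational type of the fiber quotient involving $(\omega_j^0)_{j\leq k}$ and $C$ modulo $(T \rtimes S_k) \times S_n$. Choosing invariants for the $T$-action extracts a maximal affine-space factor, and the remaining non-toric piece is identified with $\Mat^{\sym}_{2^e \times 2^e}(\bbC)^2 / \PO_{2^e}$.

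The hard part will be Step 5, namely the emergence of the specific factor $\PO_{2^e}$ acting by simultaneous conjugation on pairs of symmetric $2^e \times 2^e$ matrices, where $2^e$ is the $2$-part of $\gcd(n, k)$. The key point is that while the linear-algebraic part of the torus action can be diagonalized away into affine coordinates, a residual quadratic obstruction remains, encoded by a pair of symmetric forms on a space of dimension equal to the greatest common $2$-power divisor of $n$ and $k$; the orthogonal group $\Orth_{2^e}$ (modulo its center $\mu_2$) arises as the stabilizer of a natural quadratic form compatible with the combined $T \rtimes S_k$- and $S_n$-action. Verifying that this identification is genuinely birational requires constructing an explicit rational map between the two quotients and invoking the classical invariant theory of pairs of symmetric matrices under orthogonal conjugation.
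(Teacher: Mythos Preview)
Your Steps 1--2 are sound and essentially amount to the paper's orbit-slice reduction: after quotienting by $(\SL(U)\times\Gm)^{n+k}$, one is left with a representation of $(\mu_2\wr S_{n+k})\times\GL(V)$ modulo the diagonal $\mu_2$. But Steps 3--5 contain genuine errors and a missing idea.

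\textbf{Step 3 is incorrect.} After Step 2 the slot datum is $T_j\in\Sym^2 V$ (rank one), not the pair $(\omega_j,a_j)$. Normalizing $T_j=e_je_j^t$ for $j\le k$ via $\GL(V)$ gives stabilizer $\{g:ge_j=\pm e_j\}=(\mu_2)^k$, not the full monomial group $T\rtimes S_k$: there is no continuous torus left once the $\Gm^{n+k}$ has been divided out. Correspondingly there is no residual scalar $\omega_j^0$ to carry.

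\textbf{Step 4 is incorrect.} The group $S_{n+k}$ does \emph{not} reduce to $S_k\times S_n$. A permutation sending some slot $j>k$ into $\{1,\ldots,k\}$ destroys your normalization and must be compensated by a further $\GL(V)$-move; this interaction is precisely what makes the problem nontrivial. You cannot treat the Grassmannian base as $\Gr(2,2n{+}2)^{n+k}/(S_k\times S_n)$.

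\textbf{Step 5 is the whole theorem and you have no mechanism.} The paper's key idea, which your outline lacks, is to recognize $\mu_2\wr S_{n+k}$ as the group of signed permutation matrices inside $\Orth_{n+k}$. Using the no-name lemma to introduce an auxiliary $\bbC^{n+k}$-factor, one sends $(\lambda_1,\ldots,\lambda_{n+k})$ to $\diag(\lambda_j)\in\Mat^{\sym}_{(n+k)\times(n+k)}$ and $(v_1,\ldots,v_{n+k})\in V^{n+k}$ to the image of $V^*\to\bbC^{n+k}$ in $\Gr_k(\bbC^{n+k})$; both maps are $(\mu_2\wr S_{n+k})$-equivariant for the embedding into $\Orth_{n+k}$. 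Since a generic self-adjoint endomorphism of $\bbC^{n+k}$ has stabilizer in $\Orth_{n+k}$ equal to this Weyl group, the induced map
\[
\frac{(\bbC\oplus V)^{n+k}}{\bigl((\mu_2\wr S_{n+k})\times\GL(V)\bigr)/\mu_2}
\ \dashrightarrow\
\frac{\Mat^{\sym}_{(n+k)\times(n+k)}(\bbC)\times\Gr_k(\bbC^{n+k})}{\mathrm{PO}_{n+k}}
\]
is birational. From there the paper applies a Euclidean-algorithm recursion: writing a generic self-adjoint $f$ in block form relative to the subspace and its orthogonal complement peels off affine factors and replaces $(d_1,d_2)$ by $(d_1,d_2-d_1)$ (or passes to pairs of symmetric matrices when $d_1=d_2$), eventually reaching $\mathrm{PO}_{2^e}$. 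Your ``residual quadratic obstruction'' intuition is aiming at this, but without the embedding into $\Orth_{n+k}$ there is no visible route to it.
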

%%%
\begin{cor}
\label{cor:ModRat}
Assume $k \geq 3$ and let  $R^0_{n, k}/G_{n, k}$ be the coarse moduli scheme of `generic' 't Hooft data on $\bbP^{2n+1}$. It holds:
\par
{\rm i)} If ${\rm gcd}(n,k)\not\equiv 0\ {\rm mod}\, 4$, then $R^0_{n, k}/G_{n, k}$ is rational.
\par
{\rm ii)} If ${\rm gcd}(n,k)\not\equiv 0\ {\rm mod}\,16$, then $R^0_{n, k}/G_{n, k}$ is stably rational.
\end{cor}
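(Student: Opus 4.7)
The plan is to derive the corollary directly from Theorem \ref{thm:birational}. Set $m := 2^e$ and
\[
  Y_m := \Mat^{\sym}_{m \times m}(\bbC)^2 / \bbPO_m,
\]
so Theorem \ref{thm:birational} gives a birational equivalence
\[
  R^0_{n,k}/G_{n,k} \;\sim\; \bbA^{N} \times Y_m, \qquad N = 5kn + 4n^2 - \tfrac{m(m+3)}{2}.
\]
Since $\bbA^N$ is rational, the corollary reduces to rationality of $Y_m$ for part (i) and stable rationality of $Y_m$ for part (ii). The arithmetic hypothesis $\gcd(n,k) \not\equiv 0 \pmod{4}$ is equivalent to $e \leq 1$, i.e.\ $m \in \{1,2\}$, while $\gcd(n,k) \not\equiv 0 \pmod{16}$ is equivalent to $e \leq 3$, i.e.\ $m \in \{1,2,4,8\}$. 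Thus it is enough to show that $Y_1, Y_2$ are rational and that $Y_4, Y_8$ are stably rational.

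The case $m = 1$ is immediate since $\bbPO_1$ is trivial and $Y_1 = \bbC^2$. For $m = 2$, I would exploit $\bbPO_2 \cong \Gm \rtimes \mu_2$ with $\Gm$ a special group in Serre's sense. Quotienting $\Mat^{\sym}_{2 \times 2}(\bbC)^2$ by the identity component $\Gm$ is Zariski-locally trivial and can be computed via explicit invariants (e.g.\ $\operatorname{tr}(A), \operatorname{tr}(B), \operatorname{tr}(AB), \det(A), \det(B)$, together with one equivariant coordinate), producing a rational quotient, and a final slice argument disposes of the residual $\mu_2$-action. This settles part (i).

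For the remaining cases $m = 4, 8$ of part (ii), I would apply the no-name (section) method to the $\bbPO_m$-equivariant first projection $Y_m \to \Mat^{\sym}_m/\bbPO_m$. The base is rational, since a generic symmetric matrix is diagonalizable and the function field of $\Mat^{\sym}_m/\bbPO_m$ is the field of symmetric polynomials in the eigenvalues. The generic stabilizer of a diagonal symmetric matrix with distinct eigenvalues is the finite abelian group
\[
  \Gamma \;:=\; (\mu_2)^m / \{\pm I\} \;\cong\; (\mu_2)^{m-1}
\]
of diagonal sign changes modulo the center of $\bbPO_m$. The induced $\Gamma$-action on the fiber $\Mat^{\sym}_m$ decomposes into characters $\chi_i \chi_j$ on the $(i,j)$-entry, so the fiber quotient $\Mat^{\sym}_m / \Gamma$ is a toric variety, hence rational. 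The no-name lemma then identifies $Y_m$, up to stable birational equivalence, with the product $(\Mat^{\sym}_m/\bbPO_m) \times (\Mat^{\sym}_m/\Gamma)$ of these two rational factors, and so $Y_m$ is stably rational.

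The main obstacle will be executing the no-name argument rigorously in the $m = 4, 8$ cases: the $\bbPO_m$-action on $\Mat^{\sym}_m$ has finite but nontrivial generic stabilizer, so the first projection is not literally a principal bundle. One must instead restrict to the open locus where the stabilizer is conjugate to $\Gamma$ and invoke the stable triviality of the associated $\bbPO_m/\Gamma$-torsor. For $m = 2$, this subtlety disappears because $\bbPSO_2 = \Gm$ is special, which is precisely why one obtains full rationality rather than only stable rationality in that range.
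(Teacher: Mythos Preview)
Your reduction via Theorem~\ref{thm:birational} to the (stable) rationality of $Y_m = \Mat^{\sym}_{m \times m}(\bbC)^2/\bbPO_m$ for $m = 2^e$ is exactly the paper's approach, and the arithmetic translation of the hypotheses into $m \in \{1,2\}$ and $m \in \{1,2,4,8\}$ is correct. The case $m=1$ is trivial, and your sketch for $m=2$ via $\bbPO_2 \cong \Gm \rtimes \mu_2$ is plausible (the identity component $\bbPSO_2 \cong \Gm$ is special and does act generically freely on $\Mat^{\sym}_2$).

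The genuine gap is in your argument for $m=4$ and $m=8$. The no-name lemma requires the group to act generically \emph{freely} on the base factor, and as you acknowledge, $\bbPO_m$ acts on $\Mat^{\sym}_m$ with generic stabilizer $\Gamma \cong (\mu_2)^{m-1}$, so the lemma does not apply to the first projection. Your proposed repair --- ``invoke the stable triviality of the associated $\bbPO_m/\Gamma$-torsor'' --- is not a general fact but rather the very content of the theorem you are trying to prove. What you actually get over the generic point of $\Mat^{\sym}_m/\bbPO_m$ is a twisted form of $\Mat^{\sym}_m/\Gamma$, governed by an $N(\Gamma)/\Gamma \cong S_m$-action on the character lattice of $\Gamma$; whether the total space is stably rational is a question in multiplicative invariant theory that does not have a formal answer. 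Indeed, if your argument worked as written it would establish stable rationality of $Y_m$ for every $m$, whereas this is a well-known open problem for $m \geq 16$ (and is tied to the stable rationality of centers of generic algebras with orthogonal involution).

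The paper does not attempt any such argument: it simply quotes Saltman \cite{saltman} for the rationality of $Y_1, Y_2$ and the stable rationality of $Y_4$, and Beneish \cite{beneish} for the stable rationality of $Y_8$, and then invokes Theorem~\ref{thm:birational}. You should do the same; these inputs are substantial theorems, not routine lemmas.
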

%%%
\begin{proof}
Saltman \cite{saltman} proved that the quotient $\Mat^{\sym}_{2^e \times 2^e}( \bbC)^2/\allowbreak {\rm PO}_{2^e}$ is rational, for $2^e = 1$, $2$, and stably rational, 
for $2^e = 4$, and Beneish \cite{beneish} that it is stably rational for $2^e = 8$. This and Theorem \ref{thm:birational} imply the corollary.
\end{proof}
%%%
\begin{proof}[Proof of Theorem {\rm\ref{thm:birational}}]
By Lemma \ref{lem:GenEquiv} and Proposition \ref{prop:ModHooft}, we have
\[
    \frac{R^0_{n, k}}{G_{n, k}} \simeq \frac{(U^{\oplus (2n+2)} \oplus V)^{\oplus (n+k)}}{G_{n, k}/\mu_2}
\]
where $G_{n, k} = (\SL( U) \times \Gm) \wr S_{n+k} \times \GL( V)$ acts via the formulas \eqref{eq:action1} and \eqref{eq:action2}.
\par 
Let $u_1, u_2$ be a basis of $U$, and put $u = (u_1, u_2) \in U^2$. Then, the $G_{n, k}$-orbit of $(u, \ldots, u) \in (U^2)^{n+k}$ is open, with stabilizer
$(\mu_2 \wr S_{n+k}) \times \GL( V) \subseteq G_{n, k}$. Hence,
  \[
    \frac{R^0_{n, k}}{G_{n, k}} \simeq \frac{( U^{2n} \oplus V)^{n+k}}{\bigl((\mu_2 \wr S_{n+k}) \times \GL(V)\bigr)/\mu_2}
  \]
where each copy of $\mu_2$ acts trivially on $U^{2n}$ and via its nontrivial character on $V$.
\par 
Here, the group action is already generically free on the direct summand $V^{n+k}$, because the finite group $(\mu_2 \wr S_{n+k})/\mu_2$ acts effectively,
and, hence, generically freely, on $V^{n+k}/\GL( V) \simeq \Gr_k( \bbC^{n+k})$. Thus, the no-name lemma (\cite{BK}, Lemma 1.2) yields
\begin{align*}
    \frac{R^0_{n, k}}{G_{n, k}} & \simeq \frac{V^{n+k}}{\bigl((\mu_2 \wr S_{n+k}) \times \GL(V)\bigr)/\mu_2} \times \bbC^{4n(n+k)}\\
      & \simeq \frac{(\bbC \oplus V)^{n+k}}{\bigl((\mu_2 \wr S_{n+k}) \times \GL(V)\bigr)/\mu_2} \times \bbC^{(4n-1)(n+k)}
\end{align*}
where each $\mu_2$ acts trivially on the corresponding summand $\bbC$.
\par  
Sending $\lambda_1, \ldots, \lambda_{n+k} \in \bbC$ to the matrix $\diag(\lambda_1, \ldots, \lambda_{n+k})$ defines a morphism
\[
    \xymatrix{*!{\bbC^{n+k}} \ar@<.7ex>[r] & *!{\Mat^{\sym}_{(n+k) \times (n+k)}( \bbC)}}.
\]
This morphism is equivariant for the above (permutation) action of $\mu_2 \wr S_{n+k}$ on $\bbC^{n+k}$ and its action as a subgroup of $\Orth_{n+k}$ on 
$\Mat^{\sym}_{(n+k) \times (n+k)}( \bbC)$ by conjugation.
\par 
Given $v_1, \ldots, v_{n+k} \in V$, they define a linear map $V^* \lra \bbC^{n+k}$; sending them to the image of this map whenever it is injective defines a dominant
rational map
\[
    \xymatrix{*!{V^{n+k}} \ar@<.7ex>@{-->}[r] & *!{\Gr_k( \bbC^{n+k})}}
\]
which is invariant under $\GL( V)$ and equivariant under $\mu_2 \wr S_{n+k}$ (this time acting as a subgroup of $\Orth_{n+k}$, so each $\mu_2$ acts non-trivially
on one copy of $\bbC$). Taking the product of this morphism and this rational map, we thus obtain a rational map
\begin{equation} 
\label{eq:map}
    \xymatrix{\displaystyle \frac{(\bbC \oplus V)^{n+k}}{\bigl((\mu_2 \wr S_{n+k}) \times \GL(V)\bigr)/\mu_2} \ar@{-->}[r]
      & \displaystyle \frac{\Mat^{\sym}_{(n+k) \times (n+k)}( \bbC) \times \Gr_k(
\bbC^{n+k})}{{\rm PO}_{n+k}}}.
\end{equation}
The symmetric matrices in question correspond to endomorphisms of $\bbC^{n+k}$ which are self-adjoint for the standard bilinear form. It follows that eigenvectors 
with different eigenvalues are orthogonal. A generic self-adjoint endomorphism has no multiple eigenvalues and its eigenvectors are not isotropic. Hence, it admits an 
orthonormal basis consisting of eigenvectors, which is unique up to permutation and signs. This means that the rational map \eqref{eq:map} is indeed birational, so
\[
    \frac{R^0_{n, k}}{G_{n, k}} \simeq \frac{\Mat^{\sym}_{(n+k) \times (n+k)}( \bbC)
\times \Gr_k( \bbC^{n+k})}{{\rm PO}_{n+k}} \times \bbC^{(4n-1)(n+k)}.
\]
Now, the theorem is a consequence of the following proposition.
\end{proof}
%%%
\begin{prop}
Suppose $d = d_1 + d_2$ with $d_1, d_2 \geq 1$, and $\nu \in \{\,1, 2\,\}$. Then,  
\[
    \frac{\Mat^{\sym}_{d \times d}( \bbC) \times \Gr_{d_{\nu}}( \bbC^d)}{{\rm PO}_d} \simeq
    \frac{\Mat^{\sym}_{2^e \times 2^e}( \bbC)^2}{{\rm PO}_{2^e}} \times \bbC^{d + d_1 d_2
- 2^e(2^e+3)/2}
\]
where $2^e$ is the largest power of $2$ that divides both $d_1$ and $d_2$.
\end{prop}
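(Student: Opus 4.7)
The plan is to combine a Grassmannian slice reduction, an Euclidean-style iteration on the off-diagonal block of the symmetric matrix, and a final descent isolating the $2$-part of $\gcd(d_1,d_2)$.

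First, I would observe that $\bbPO_d$ acts transitively on the open locus of non-isotropic $d_\nu$-dimensional subspaces of $\bbC^d$, with stabilizer of a fixed $W_0$ equal to the Levi $(\Orth_{d_1}\times\Orth_{d_2})/\mu_2^{\mathrm{diag}}$. A slice reduction therefore gives
\[
  \frac{\Mat^{\sym}_{d\times d}(\bbC) \times \Gr_{d_\nu}(\bbC^d)}{\bbPO_d}
  \simeq
  \frac{\Mat^{\sym}_{d\times d}(\bbC)}{(\Orth_{d_1}\times\Orth_{d_2})/\mu_2}.
\]
Using the decomposition $\bbC^d = W_0 \oplus W_0^\perp$, the symmetric matrix space splits as $\Mat^{\sym}_{d\times d}(\bbC) = \Mat^{\sym}_{d_1\times d_1}(\bbC) \oplus \Mat_{d_1\times d_2}(\bbC) \oplus \Mat^{\sym}_{d_2\times d_2}(\bbC)$, with the Levi acting by conjugation on the two symmetric blocks and by two-sided multiplication on the off-diagonal cross block.

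Next, I would carry out an Euclidean-type iteration using the cross block. Assuming without loss of generality $d_1 \leq d_2$, the generic $\Orth_{d_1}\times\Orth_{d_2}$-orbit on $\Mat_{d_1\times d_2}(\bbC)$ is represented by the slice point $[I_{d_1}\,|\,0]$, whose stabilizer is $\Orth_{d_1}\times\Orth_{d_2-d_1}$ (the first factor embedded diagonally into both blocks of the second). Taking this slice reduces the stabilizer accordingly; under its action the summand $\Mat^{\sym}_{d_2\times d_2}(\bbC)$ decomposes further as $\Mat^{\sym}_{d_1\times d_1}(\bbC) \oplus \Mat_{d_1\times(d_2-d_1)}(\bbC) \oplus \Mat^{\sym}_{(d_2-d_1)\times(d_2-d_1)}(\bbC)$, producing an extra copy of $\Mat^{\sym}_{d_1\times d_1}$ under diagonal $\Orth_{d_1}$-conjugation, a new smaller cross block, and a smaller symmetric block. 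The no-name lemma (\cite{BK}, Lemma 1.2) then strips off linear summands as affine factors $\bbC^N$ whenever the residual action on a suitable core remains generically free. Iterating the Euclidean step $(d_1,d_2) \to (d_1,d_2-d_1)$, with swaps as needed, after finitely many steps one reaches the diagonal $\bbPO_{\gcd(d_1,d_2)}$ acting by conjugation on a direct sum of copies of $\Mat^{\sym}_{\gcd\times\gcd}(\bbC)$. A further application of the no-name lemma, using that $\bbPO_n$ acts generically freely on $(\Mat^{\sym}_{n\times n}(\bbC))^2$ (implicit in Saltman's analysis), reduces this to $\Mat^{\sym}_{\gcd\times\gcd}(\bbC)^2/\bbPO_{\gcd} \times \bbC^N$.

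Finally, I would descend from $\bbPO_{\gcd}$ to $\bbPO_{2^e}$, writing $\gcd = 2^e m$ with $m$ odd. The tensor embedding $\Orth_{2^e}\times\Orth_m \hookrightarrow \Orth_{2^e m}$ coming from $\bbC^{2^e m} = \bbC^{2^e}\otimes\bbC^m$ (with tensor product of standard bilinear forms), combined with the fact that for odd $m$ one has $\bbPO_m = \mathrm{SO}_m$ acting on $\Mat^{\sym}_{m\times m}(\bbC)$ with generically finite stabilizer (complex symmetric matrices with distinct eigenvalues being generically orthogonally diagonalizable), lets me peel the odd part off. The resulting odd combinatorial quotient is rational by a Fischer-type argument, yielding $\Mat^{\sym}_{\gcd\times\gcd}(\bbC)^2/\bbPO_{\gcd} \simeq \Mat^{\sym}_{2^e\times 2^e}(\bbC)^2/\bbPO_{2^e} \times \bbC^N$, which together with the previous steps produces the desired formula. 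The main obstacle is precisely this final descent: the Euclidean iteration terminates naturally at $\bbPO_{\gcd}$, and carving out the pure $\bbPO_{2^e}$-quotient requires both the tensor decomposition of orthogonal groups and careful dimension bookkeeping to make all affine factors combine to $\bbC^{d+d_1d_2-2^e(2^e+3)/2}$.
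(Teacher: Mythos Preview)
Your Euclidean step contains a genuine error. You claim that the generic $\Orth_{d_1}\times\Orth_{d_2}$-orbit on $\Mat_{d_1\times d_2}(\bbC)$ is represented by $[I_{d_1}\,|\,0]$, but this action has no open orbit: the eigenvalues of $MM^t$ are invariant (since $(gMh^{-1})(gMh^{-1})^t=gMM^tg^{-1}$ for $h\in\Orth_{d_2}$), so the quotient has dimension at least $d_1$. Concretely, the orbit of $[I_{d_1}\,|\,0]$ has codimension $d_1(d_1+1)/2$ in $\Mat_{d_1\times d_2}(\bbC)$, so the slice you propose does not exist and the no-name reduction you want to run on the remaining summands never gets started.

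The paper avoids this by \emph{not} slicing the cross block. It keeps the format $(\Mat^{\sym}\times\Gr)/\bbPO$ throughout the Euclidean iteration: for $d_1>d_2$ it sends $(f,U_\nu)$ to the pair $(f_{11},\ f_{12}(U_2)\subseteq U_1)$ in $\Mat^{\sym}_{d_1\times d_1}(\bbC)\times\Gr_{d_2}(\bbC^{d_1})$, and identifies the fibre of this rational map as a tower of two rank-$d_2(d_2+1)/2$ vector bundles (the pulled-back form on $f_{12}(U_2)$ and the transported $f_{22}$). The case $d_1=d_2$ is handled by a separate map recording $f_{11}$ together with the pulled-back form $f_{21}^*(b_d)$. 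Iterating these two moves along the Euclidean algorithm lands on $\Mat^{\sym}_{h\times h}(\bbC)^2/\bbPO_h$ with $h=\gcd(d_1,d_2)$.

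Your final descent from $h$ to $2^e$ via the tensor embedding $\Orth_{2^e}\times\Orth_m\hookrightarrow\Orth_{2^em}$ is also not how the paper proceeds, and as written it is only a sketch: $\Sym^2(\bbC^{2^e}\otimes\bbC^m)$ does not split into a product in any way that makes the ``peeling off the odd part'' step evident. The paper's argument is instead a gerbe/index computation: the stack $[\Mat^{\sym}_{h\times h}(\bbC)^2/\Orth_h]$ is generically a $\mu_2$-gerbe, and the standard representation $\bbC^h$ gives a weight-one bundle on it whose index divides $h$, hence divides $2^e$. This forces the associated $\Gr_{2^e}(\bbC^h)$-bundle to be generically rational, so one may freely introduce that Grassmannian factor and then \emph{reapply} the already-proven Euclidean formula with parameters $(h,2^e)$ to reach $\Mat^{\sym}_{2^e\times 2^e}(\bbC)^2/\bbPO_{2^e}$.
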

%%%
\begin{proof}
We endow $\bbC^d$ with the standard symmetric bilinear form $b_d\colon \bbC^d \times \bbC^d \lra \bbC$. Fix $\nu = 1$ or $\nu = 2$. Let a general linear subspace 
$U_{\nu} \subseteq \bbC^d$ of dimension $d_{\nu}$ be given. Then, $U_{3 - \nu} := U_{\nu}^{\perp}$ has dimension $d_{3 - \nu}$, and
\[
    \bbC^d = U_1 \oplus U_2.
\]
Let moreover $f\colon \bbC^d \lra \bbC^d$ be a generic self-adjoint endomorphism. With respect to the above orthogonal direct sum decomposition, we write
\[
    f = \left( \begin{smallmatrix} f_{11} & f_{12} \\ f_{21} & f_{22}
\end{smallmatrix} \right)\colon U_1 \oplus U_2 \longto U_1 \oplus U_2.
\]
The self-adjointness $f = f^*$ means $f_{11} = f_{11}^*$, $f_{22} = f_{22}^*$ and $f_{21} = f_{12}^*$. We may assume $d_1 \geq d_2$ without loss of generality. 
Since $f$ is generic, $f_{12}\colon U_2 \lra U_1$ is then injective. We choose an orthonormal basis of $U_1$.
\par 
In the case $d_1 > d_2$, we define a dominant rational map
\begin{equation} \label{eq:rational} \xymatrix{
    \displaystyle \frac{\Mat^{\sym}_{d \times d}( \bbC) \times \Gr_{d_{\nu}}(
\bbC^d)}{{\rm PO}_d} \ar@{-->>}[r]
      & \displaystyle \frac{\Mat^{\sym}_{d_1 \times d_1}( \bbC) \times \Gr_{d_2}(
\bbC^{d_1})}{{\rm PO}_{d_1}},
} 
\end{equation}
by sending the pair $( f, U_{\nu})$ to $f_{11}\colon U_1 \lra U_1$ and $U_2' := f_{12}(U_2) \subseteq U_1$, where we identify $U_1$ with $\bbC^{d_1}$ via the chosen 
basis; this is well-defined modulo ${\rm PO}_{d_1}$.
\par 
Using the isomorphism $f_{12}\colon U_2 \lra U_2'$, the restriction of $b_d$ to $U_2 \times U_2$ and the map $f_{22}\colon U_2 \lra U_2$ yield a non-degenerate symmetric
bilinear form on $U_2'$, and a self-adjoint endomorphism $f_{22}'$ of $U_2'$. From all these, we can reconstruct $f$ up to ${\rm PO}_d$ by choosing an orthogonal
isomorphism $U_1 \oplus U_2' \cong \bbC^d$.
\par 
Given a generic point $( f_{11}\colon U_1 \lra U_1, U_2' \subseteq U_1)$ in the image of our rational map \eqref{eq:rational}, we have just seen that the fiber over 
it parameterizes non-degenerate symmetric bilinear forms on $U_2'$ together with self-adjoint endomorphisms $f_{22}'$ of $U_2'$. Since $\mu_2 \subseteq \Orth_{d_1}$ acts 
by its nontrivial character on $U_2'\subseteq U_1$, it acts trivially on these forms and endomorphisms; hence, the rational map \eqref{eq:rational} is birationally a 
tower of two vector bundles, both of rank $d_2\cdot (d_2 + 1)/2$. This proves
\begin{equation}
  \label{eq:birational_1}
    \frac{\Mat^{\sym}_{d \times d}( \bbC) \times \Gr_{d_{\nu}}( \bbC^d)}{{\rm PO}_d} \simeq
      \frac{\Mat^{\sym}_{d_1 \times d_1}( \bbC) \times \Gr_{d_2}(
  \bbC^{d_1})}{{\rm PO}_{d_1}} \times \bbC^{d_2(d_2+1)},
\end{equation}
for $\nu = 1$ and for $\nu = 2$; recall that we are assuming $d = d_1 + d_2$ with $d_1 > d_2$.
\par 
In the case $d_1 = d_2$, genericity of $f$ implies that $f_{21}\colon U_1 \lra U_2$ is an isomorphism. We define a dominant rational map
\begin{equation}
  \label{eq:rational'}
  \xymatrix{
    \displaystyle \frac{\Mat^{\sym}_{d \times d}( \bbC) \times \Gr_{d_1}(
\bbC^d)}{{\rm PO}_d} \ar@{-->>}[r]
      & \displaystyle \frac{\Mat^{\sym}_{d_1 \times d_1}( \bbC)^2}{{\rm PO}_{d_1}}
}
\end{equation}
by sending $( f, U_1)$ to $f_{11}\colon U_1 \lra U_1$ and $f_{21}^*( b_d)\colon U_1 \times U_1 \lra \bbC$, which correspond to symmetric matrices via the chosen basis of 
$U_1$; this is well-defined modulo ${\rm PO}_{d_1}$.
\par 
The endomorphism $f_{22}' := f_{21}^{-1} \circ f_{22} \circ f_{21}$ of $U_1$ is self-adjoint with respect to $f_{21}^*( b_d)$. From all these, we can again reconstruct 
$f$ up to ${\rm PO}_d$. Since $\mu_2 \subseteq\Orth_{d_1}$ acts trivially on these endomorphisms $f_{22}'$, the rational map \eqref{eq:rational'} is birationally a vector 
bundle of rank $d_1\cdot (d_1 + 1)/2$. This proves
\begin{equation}
\label{eq:birational_2}
    \frac{\Mat^{\sym}_{d \times d}( \bbC) \times \Gr_{d_1}( \bbC^d)}{{\rm PO}_d} \simeq
      \frac{\Mat^{\sym}_{d_1 \times d_1}( \bbC)^2}{{\rm PO}_{d_1}} \times
\bbC^{d_1(d_1+1)/2}
\end{equation}
under the assumption $d = d_1 + d_2$ with $d_1 = d_2$.
\par 
Now, let's return to the general case $d = d_1 + d_2$ with $d_1, d_2 \geq 1$. Following the Euclidean algorithm as it computes $h := \gcd( d_1, d_2)$, and composing with 
the corresponding birational equivalence \eqref{eq:birational_1} or \eqref{eq:birational_2} in each step, we get
\begin{equation}
\label{eq:birational_3}
    \frac{\Mat^{\sym}_{d \times d}( \bbC) \times \Gr_{d_{\nu}}( \bbC^d)}{{\rm PO}_d} \simeq
      \frac{\Mat^{\sym}_{h \times h}( \bbC)^2}{{\rm PO}_h} \times \bbC^{d + d_1 d_2 -
h(h+3)/2}.
\end{equation}
Recall that $2^e$ is the largest power of $2$ dividing $h$. If $h = 2^e$, then we are done, so we assume $2^e < h$. Since the action of ${\rm PO}_h$ is generically free 
here, the stack quotient $[\Mat^{\sym}_{h \times h}( \bbC)^2/\Orth_h]$ is generically a $\mu_2$-gerbe over our birational quotient modulo ${\rm PO}_h$. The standard 
representation $\bbC^h$ of $\Orth_h$ yields a vector bundle of rank $h$ and nontrivial weight on this $\mu_2$-gerbe, whose index (at the generic point) therefore
divides $h$, and hence divides $2^e$. It follows that the Grassmannian bundle with fibers $\Gr_{2^e}( \bbC^h)$ over this stack quotient has a rational generic fiber, so
\[
    \frac{\Mat^{\sym}_{h \times h}( \bbC)^2}{{\rm PO}_h} \times \bbC^{2^e( h - 2^e)}
      \simeq \frac{\Mat^{\sym}_{h \times h}( \bbC)^2 \times \Gr_{2^e}( \bbC^h)}{{\rm PO}_h}.
\]
Applying \eqref{eq:birational_3} once more, with $h$ and $2^e$ instead of $d$ and $d_{\nu}$, completes the proof.
\end{proof}
%%%
\begin{rem}
The same argument shows that the stack quotient $[ R^0_{n, k}/G_{n, k}]$ is birational to 
$\bbC^{5kn + 4n^2 - 2^e(2^e+3)/2} \times [\Mat^{\sym}_{2^e \times 2^e}(\bbC)^2/\Orth_{2^e}]$. In particular, there is a Poincar\'{e} family on some dense open part if 
and only if $2^e = 1$, which means that $n$ or $k$ is odd. Otherwise, the obstruction is a Brauer class of order $2$ and index $2^e$; cf.\ \cite{BCD} and \cite{Amitsur}, 
Theorem 3.
\end{rem}
%%%
\section{The moduli space of RS-instanton bundles and its birational type}
%%%
In this section, we construct the moduli stack of RS-instanton bundles on $\bbP$ as well as the moduli space of stable RS-instanton bundles, and we determine the 
birational type of these objects. As before, the moduli spaces are obtained as quotients of a parameter space by a group action. This time the group that acts is 
non-reductive.
\par
Put $U := \bbC^2$, and let $p, q \geq 1$ be integers. We consider the multiplication map
\[
  \mu\colon S^p U \otimes S^q U \longto S^{p+q} U.
\]
\begin{lemma}
For a linear hyperplane $\wp \subset S^p U$, the following are equivalent:
\par
{\rm i)} There is a line $\ell \subset U$ with $\wp = \ell \cdot S^{p-1} U$.
\par
{\rm ii)} The restriction $\wp \otimes S^q U \lra S^{p+q} U$ of $\mu$ is not surjective.
\end{lemma}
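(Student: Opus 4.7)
The implication $(i) \Rightarrow (ii)$ is immediate: if $\wp = \ell \cdot S^{p-1}U$, then $\mu$ sends $\wp \otimes S^qU$ into $\ell \cdot S^{p+q-1}U$, a proper subspace of $S^{p+q}U$. For the converse, my plan is to dualize. Let $\wp^\perp = \bbC \cdot F \subset S^pU^*$ be the annihilator line. Dualizing the multiplication $\mu$ yields the comultiplication $S^{p+q}U^* \to S^pU^* \otimes S^qU^*$, and the failure of $\mu|_{\wp \otimes S^qU}$ to be surjective corresponds to the existence of a nonzero $\psi \in S^{p+q}U^*$ whose polarization $T_\psi\colon S^qU \to S^pU^*$, $g \mapsto g(\partial)\psi$, takes values in the line $\bbC \cdot F$. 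A separate observation is that a hyperplane $\wp$ has the form $\ell \cdot S^{p-1}U$ iff $\wp^\perp$ is spanned by a $p$-th power $L^p$ for some $L \in U^*$ annihilating $\ell$; indeed, $L(\partial)^p$ vanishes on $\ell \cdot S^{p-1}U$, both subspaces have dimension $p$, so they coincide. Thus it remains to prove the catalecticant statement: if $\im(T_\psi) \subseteq \bbC \cdot F$ for some nonzero $\psi \in S^{p+q}U^*$, then $F$ is a $p$-th power of a linear form in $U^*$.

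This is essentially Sylvester's classical characterization of rank-one binary forms. I would choose a basis $X, Y$ of $U^*$ and the binomial-normalized expansion $\psi = \sum_{k=0}^{p+q} \binom{p+q}{k} c_k X^k Y^{p+q-k}$. A short direct calculation then shows that the matrix of $T_\psi$, with respect to the monomial basis $(X^a Y^{q-a})_{a=0}^q$ of $S^qU$ and the binomial-normalized basis $\bigl(\binom{p}{j} X^j Y^{p-j}\bigr)_{j=0}^p$ of $S^pU^*$, coincides up to a nonzero scalar with the Hankel matrix $(c_{j+a})_{0 \leq j \leq p,\, 0 \leq a \leq q}$. Since this Hankel matrix has rank at most one and $\psi \neq 0$, its entries are either a geometric progression $c_k = c_0 r^k$ or, in the degenerate boundary cases, supported at a single extremal index; in either situation $\psi = c\,(\alpha X + \beta Y)^{p+q}$ for some nonzero $c$ and $(\alpha, \beta) \neq (0, 0)$. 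Then every nonzero value of $T_\psi$ is a scalar multiple of $(\alpha X + \beta Y)^p$, so $F$ is proportional to this $p$-th power, and $\wp = \ell \cdot S^{p-1}U$ where $\ell \subset U$ is the line on which $\alpha X + \beta Y$ vanishes.

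The only substantive step is the Hankel rank-one analysis together with its degenerate boundary cases, and the binomial normalization above is precisely what makes the catalecticant matrix cleanly Hankel so that the conclusion is a one-line observation on $2 \times 2$ minors.
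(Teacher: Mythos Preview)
Your proof is correct and takes a genuinely different route from the paper's. The paper argues the contrapositive geometrically: identifying $S^d U$ with $H^0(\bbP^1, \calO(d))$, the hypothesis that $\wp$ is not of the form $\ell \cdot S^{p-1}U$ means the evaluation map $\eta\colon \wp \otimes \calO_{\bbP^1} \to \calO_{\bbP^1}(p)$ is surjective; Grothendieck splitting together with the long exact sequence then forces $\ker\eta \cong \calO_{\bbP^1}(-1)^{p-2}\oplus\calO_{\bbP^1}(-2)$, so $H^1(\ker\eta(q)) = 0$ and the restricted multiplication $\wp \otimes S^qU \to S^{p+q}U$ is onto. Your argument instead dualizes to the classical apolarity picture: non-surjectivity produces a nonzero $\psi \in S^{p+q}U^*$ whose catalecticant $T_\psi$ has rank one, and the Hankel structure of that matrix forces $\psi$ (hence $F$) to be a pure power of a linear form, which unwinds to $\wp = \ell \cdot S^{p-1}U$. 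Your approach is more elementary---no sheaf cohomology, just Sylvester's characterization of rank-one binary forms---and makes the link to classical invariant theory explicit; the paper's approach sits more naturally with the ambient vector-bundle techniques and would adapt more readily to other base curves or line bundles. One small notational wrinkle: since you chose $X, Y \in U^*$, the monomials $X^a Y^{q-a}$ lie in $S^qU^*$, not $S^qU$; you mean the dual basis $e_1^a e_2^{q-a}$ of $S^qU$ (with $e_1, e_2$ dual to $X, Y$). This does not affect the argument.
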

\begin{proof}
The case $p = 1$ is trivial, so we assume $p \geq 2$. It is obvious that i) implies ii).
\par
For the converse, we assume that there is no line $\ell \subset U$ with $\wp = \ell \cdot S^{p-1} U$. We identify $U$ with $H^0(\bbP^1, \calO(1))$; then
  \[
    \wp \subset S^p U = H^0\bigl(\bbP^1, \calO(p)\bigr).
  \]
The assumption on $\wp$ means that the canonical evaluation map
  \[
    \eta\colon \calO_{\bbP^1}^p \cong \wp \otimes_{\bbC} \calO_{\bbP^1} \longto
\calO_{\bbP^1}( p)
  \]
is surjective. The kernel of $\eta$ is a vector bundle of rank $p-1$ over $\bbP^1$, so
\begin{equation} \label{eq:splitting}
    \ker( \eta) \cong \bigoplus_{i=1}^{p-1} \calO_{\bbP^1}( a_i)
\quad\text{with } a_1, \ldots, a_{p-1} \in \bbZ,
\end{equation}
due to Grothendieck's splitting theorem. We have a short exact sequence
  \[
    0 \longto \ker( \eta) \longto \wp \otimes_{\bbC} \calO_{\bbP^1} \longto[ \eta]
\calO_{\bbP^1}( p) \longto 0
  \]
of vector bundles over $\bbP^1$. The associated long exact cohomology sequence reads
  \[
    0 \longto H^0\bigl(\bbP^1, \ker(\eta)\bigr) \longto \wp \longto S^p U \longto H^1\bigl(\bbP^1, \ker( \eta)\bigr) \longto 0.
  \]
Hence, we conclude $H^0(\bbP^1, \ker( \eta)) = 0$ and $H^1(\bbP^1, \ker(\eta)) \cong \bbC$. Comparing this with the decomposition \eqref{eq:splitting}, we see 
$\ker(\eta) \cong \calO_{\bbP^1}( -1)^{p-2} \oplus \calO_{\bbP^1}( -2)$ and, in particular,
  \[
    H^1\bigl(\bbP^1, \ker( \eta) \otimes \calO_{\bbP^1}(q) \bigr) = 0.
  \]
This implies that the following map induced by $\eta$ is surjective:
  \[
    H^0\bigl(\bbP^1, \wp \otimes_{\bbC} \calO_{\bbP^1}(q)\bigr) \longto H^0\bigl(\bbP^1,\calO_{\bbP^1}(p + q)\bigr).
  \]
But this map is the restriction of the multiplication map $\mu$; therefore this restriction is surjective. This shows that ii) is false if i) is false.
\end{proof}
%%%
\begin{cor}
\label{cor:GL_2}
Suppose that three linear maps
  \[
    \alpha \in \GL( S^p U), \quad \beta \in \GL( S^q U) \quad\text{and}\quad
    \gamma \in \GL( S^{p+q} U)
  \]
satisfy $\mu \circ (\alpha \otimes \beta) = \gamma \circ \mu$. Then, there is a linear map $g \in \GL( U)$ such that
  \[
    S^p g \in \bbC^* \cdot \alpha, \quad S^q g \in \bbC^* \cdot \beta
    \quad\text{and}\quad S^{p+q} g \in \bbC^* \cdot \gamma.
  \]
This linear map $g$ is unique up to multiplication by $\bbC^*$.
\end{cor}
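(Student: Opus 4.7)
The approach is to extract from the hypothesis $\mu \circ (\alpha \otimes \beta) = \gamma \circ \mu$ a common projective automorphism of $\bbP(U)$ underlying all three maps $\alpha, \beta, \gamma$. The lemma provides the crucial intrinsic description: a hyperplane $\wp \subset S^p U$ is of the \emph{special} form $\ell \cdot S^{p-1} U$ if and only if the restriction $\mu|_{\wp \otimes S^q U}$ fails to be surjective. Since $\beta$ is invertible (so $\beta(S^q U) = S^q U$) and $\gamma$ carries hyperplanes to hyperplanes, the intertwining relation forces $\alpha$ to permute the set of special hyperplanes in $S^p U$; the analogous argument, applying the lemma with the roles of $p$ and $q$ swapped, shows that $\beta$ permutes the special hyperplanes in $S^q U$. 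Tracking the identity $\mu(\ell \cdot S^{p-1} U \otimes S^q U) = \ell \cdot S^{p+q-1} U$ through the intertwining relation further shows that $\gamma$ permutes special hyperplanes in $S^{p+q} U$, and that the three induced bijections $\phi_{\alpha}, \phi_{\beta}, \phi_{\gamma} \colon \bbP(U) \to \bbP(U)$ all coincide with a single bijection $\phi$.

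Next I would argue that $\phi$ is a projective automorphism. Dualizing, $\alpha^{\vee}$ preserves the Veronese $\nu_p(\bbP(U^*)) \subset \bbP(S^p U^*)$ set-theoretically, since the special hyperplane $\ell \cdot S^{p-1} U$ is annihilated by the line $\bbC \cdot (e^*)^p$ for any $e^* \in U^*$ vanishing on $\ell$. The induced automorphism of the Veronese $\cong \bbP^1$ is algebraic, so $\phi \in \PGL(U)$. Choose any lift $g \in \GL(U)$ of $\phi$.

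The heart of the argument is to upgrade this projective information to the linear statement $\alpha \in \bbC^* \cdot S^p g$. Setting $\widetilde{\alpha} := \alpha \circ (S^p g)^{-1}$, the map $\widetilde{\alpha}$ induces the identity on $\bbP(U)$, so stabilizes every special hyperplane. Dually, $\widetilde{\alpha}^{\vee}$ fixes every Veronese line $\bbC \cdot (e^*)^p \subset S^p U^*$, yielding a scale-invariant function $c \colon U^* \setminus \{0\} \to \bbC^*$ with $\widetilde{\alpha}^{\vee}((e^*)^p) = c(e^*)(e^*)^p$. Locally $c$ is a ratio of polynomial expressions in $e^*$, hence regular on $U^* \setminus \{0\}$; by Hartogs it extends over the origin and, being homogeneous of degree zero on $U^* = \bbA^2$, is constant. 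Since the Veronese points span $S^p U^*$ by polarization, $\widetilde{\alpha}^{\vee}$ is a scalar multiple of the identity, so $\alpha = a \cdot S^p g$ for some $a \in \bbC^*$. The same reasoning gives $\beta = b \cdot S^q g$. Then $\gamma$ comes for free: the $\GL(U)$-equivariance of multiplication yields $\mu \circ (S^p g \otimes S^q g) = S^{p+q} g \circ \mu$, so the intertwining relation together with the surjectivity of $\mu$ forces $\gamma = ab \cdot S^{p+q} g$.

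The main obstacle is the regularity-plus-homogeneity argument for $c$ in the third step; everything else is essentially formal once the lemma is available. Uniqueness of $g$ modulo $\bbC^*$ follows from $S^p g_1 = \lambda \cdot S^p g_2 \implies g_1(e)^p = \lambda \cdot g_2(e)^p$ for all $e \in U$; a continuous choice of $p$-th root on the connected space $U \setminus \{0\}$ then gives $g_1 \in \bbC^* \cdot g_2$.
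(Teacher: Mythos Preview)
Your argument is correct and follows the same overall strategy as the paper: use the lemma to see that $\alpha$ permutes the special hyperplanes and hence induces an automorphism $\phi$ of $\bbP(U)\cong\bbP^1$, lift $\phi$ to some $g\in\GL(U)$, reduce to $\phi=\id$, and then show that any element of $\GL(S^pU)$ stabilizing every special hyperplane is a scalar. The one substantive difference is in this last step. The paper works directly in $S^pU$: by the fundamental theorem of algebra a generic line in $S^pU$ is $\ell_1\cdots\ell_p$ with the $\ell_i\subset U$ distinct, and equals the intersection of the $p$ special hyperplanes $\ell_i\cdot S^{p-1}U$; since $\tilde\alpha$ preserves each of these, it fixes a Zariski-dense set of points of $\bbP(S^pU)$ and is therefore a scalar. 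You dualize instead, observing that $\tilde\alpha^\vee$ fixes every Veronese point $[(e^*)^p]$ and that the resulting eigenvalue function on $U^*\setminus\{0\}$ is regular and homogeneous of degree zero, hence constant. Both routes are valid; the paper's is a touch more elementary (no Hartogs), while yours makes the Veronese picture explicit and could be phrased equivalently as ``a morphism $\bbP^1\to\Gm$ is constant''. Your upfront verification that $\phi_\alpha=\phi_\beta=\phi_\gamma$ via the identity $\mu(\ell\cdot S^{p-1}U\otimes S^qU)=\ell\cdot S^{p+q-1}U$ is a minor streamlining over the paper, which handles $\beta$ only after the reduction by $g$.
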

%%%
\begin{proof}
The map
\begin{eqnarray*}
    \bbP(U) &\longto& \bbP (S^p U)^*
\\
     \ell &\lma& \ell \cdot S^{p-1} U
\end{eqnarray*}
is a closed immersion. Its image is stable under the automorphism of $\bbP (S^p U)^*$ induced by $\alpha$, according to the previous lemma. Thus, $\alpha$ induces 
an automorphism of $\bbP(U)$, which we lift to an automorphism $g$ of $U$. Modifying $\alpha$, $\beta$ and $\gamma$ by $S^p g$, $S^q g$ and $S^{p+q} g$, respectively, 
we may assume $g = \id_U$. This means that the automorphism of $\bbP(S^p U)^*$ induced by $\alpha$ restricts to the identity on $\bbP(U) \subseteq \bbP (S^p U)^*$.
\par 
Given a line $\ell$ in $U$, the hyperplane $\wp := \ell \cdot S^{q-1} U$ in $S^q U$ has the property that $(\ell \cdot S^{p-1} U) \otimes S^q U$ and $S^p U \otimes \wp$ 
have the same image under $\mu$. According to Corollary \ref{cor:GL_2}, this characterizes $\wp$ uniquely. Since $\alpha( \ell \cdot S^{p-1} U) = \ell \cdot S^{p-1} U$ 
due to the previous paragraph, we conclude $\beta( \ell \cdot S^{q-1} U) = \ell \cdot S^{q-1} U$ as well.
\par 
The fundamental theorem of algebra states that every line $\ell$ in $S^p U$ is a product of $p$ lines $\ell_1, \ldots, \ell_p$ in $U$. If $\ell$ is generic, then the 
$\ell_i$ are all distinct, and $\ell$ is the intersection of the hyperplanes $\wp_i = \ell_i \cdot S^{p-1} U$ in $S^p U$. Due to the first paragraph, we have 
$\alpha( \wp_i) = \wp_i$ for all $i$, and, hence, $\alpha( \ell) = \ell$ for generic lines $\ell$. It follows that $\alpha$ induces the identity on $\bbP( S^p U)$, and, 
hence, $\alpha \in \bbC^* \cdot \id$.
\par 
Applying the same arguments to $\beta$, we get $\beta \in \bbC^* \cdot \id$ as well. Because $\mu$ is surjective, this implies $\gamma \in \bbC^* \cdot \id$, too.
\end{proof} 
%%%
Let
\[
  f\colon (S^n U)^* \longto H^0\bigl(\bbP, \calO_{\bbP}( 1)\bigr)
\]
be a linear embedding. We fix an integer $k \geq 1$ and consider a linear map
\[
  h\colon S^{n+2k-2} U \longto H^0\bigl(\bbP, \calO_{\bbP}(1)\bigr).
\]
We endow the trivial algebraic vector bundle of rank $2n+2k$ over $\bbP$,
\[
  E^0_{n, k} := \bigl((S^{n+k-1} U)^* \oplus S^{n+k-1} U\bigr) \otimes_{\bbC} \calO_{\bbP},
\]
with the standard symplectic form $J$. We also form the algebraic vector bundles
\[
  E^{-1}_{n, k} := ( S^{k-1} U) \otimes_{\bbC} \calO_{\bbP}( -1)
\quad\text{and}\quad E^1_{n, k} := ( S^{k-1} U)^* \otimes_{\bbC} \calO_{\bbP}(
1)
\]
of rank $k$ over $\bbP$. The linear maps $f$ and $h$ define a morphism of vector bundles
\[
  A_{f, h}\colon E^0_{n, k} \longto E^1_{n, k}
\]
whose induced map of global sections is the direct sum of the following two maps:
\begin{gather}
  \label{eq:F}
( S^{n+k-1} U)^* \longto[ \mu^*] ( S^{k-1} U)^* \otimes (S^n U)^*\xrightarrow{\id \otimes f} ( S^{k-1} U)^* \otimes
H^0\bigl(\bbP, \calO_{\bbP}(1)\bigr),
\\
  \label{eq:H}
S^{n+k-1} U \longto[ \mu_*] ( S^{k-1} U)^* \otimes S^{n+2k-2} U \xrightarrow{\id \otimes h} ( S^{k-1} U)^* \otimes
H^0\bigl(\bbP, \calO_{\bbP}(1)\bigr).
\end{gather}
We assume that $A_{f, h}$ is surjective; this is an open condition on $h$. The composition
\begin{equation}
\label{eq:monad21}
\begin{CD}
  E^{-1}_{n, k} @>J \circ A_{f, h}^*>> E^0_{n, k} @> A_{f, h}>> E^1_{n, k}
\end{CD}
\end{equation}
vanishes; this follows easily from the observation that the bilinear map
\[
  S^{k-1} U \otimes S^{k-1} U \longto[ \mu] (S^n U)^* \otimes S^{n+2k-2} U
\xrightarrow{f \otimes h} H^0\bigl(\bbP, \calO_{\bbP}(2)\bigr)
\]
is symmetric. Thus, for a generic choice of $h$, \eqref{eq:monad21} is a monad and defines a symplectic instanton bundle $E$.
\par 
The special form of the map $A_{f, h}$ means precisely that $E$ is an RS-instanton bundle. To see this, choose a basis of $U$, and endow the symmetric powers of $U$ 
and their duals with the induced bases. Let $f_i, h_i \in H^0( \bbP, \calO_{\bbP}( 1))$ denote the images of these basis vectors under $f, h$. Then, the map 
\eqref{eq:F} corresponds to the special matrix $F$ with entries $f_i$, and the map \eqref{eq:H} corresponds to the persymmetric matrix $H$ with entries
$h_i$. Moreover, for a generic choice of the $h_i$, the $n$-space $\{\,f_0=f_1=\cdots=f_n=0 \,\}$ does not contain any zeros of the maximal minors of $H$, and,
thus, we conclude that $E$ is an RS-instanton bundle.
\par 
Conversely, if $E$ is an RS-instanton bundle, then the entries of the matrices $F$ and $H$ define linear maps $f$ and $h$ as above. This shows that the RS-instanton 
bundles are precisely the instanton bundles arising from monads of the form \eqref{eq:monad21}.
\par 
Let $L_f \subseteq \bbP$ denote the locus where all sections in the image of $f$ vanish. Then, $L_f$ is a linear subspace of dimension $n$ in $\bbP$; it is the linear 
subspace restricted to which $E$ has $n+k$ global sections by construction. If $E$ is generic, then $L_f$ is the only linear subspace of dimension $n$ in $\bbP$ with 
that property; see Proposition \ref{boundsections-sharp}.
\par 
The next question is: When are two such RS-instanton bundles isomorphic? Let
\[
  g \in \GL( U), \quad t \in \bbC^* \quad\text{and}\quad u \in (S^{2n+2k-2} U)^*
\]
be given. They define an isomorphism of symplectic monads
\begin{equation} \label{eq:iso}
\xymatrix{
  E^{-1}_{n, k} \ar[rrrr]^{J \circ A_{f, h}}\ar[d]^-{t^{-1} \cdot S^{k-1} g} & & &&
  E^0_{n, k} \ar[rrrr]^-{A_{f, h}}\ar[d]^-{\left(\begin{smallmatrix} (S^{n+k-1} g^{-1})^* & (\id \otimes u) \circ \mu_*\\0 & S^{n+k-1} g
\end{smallmatrix} \right)} & & & &
  E^1_{n, k} \ar[d]^-{t (S^{k-1} g^{-1})^*}
\\
  E^{-1}_{n, k} \ar[rrrr]_-{J \circ A_{f', h'}^*} &&&& E^0_{n, k} \ar[rrrr]_-{A_{f',h'}} &&&& E^1_{n, k}
}
\end{equation}
where the vertical map in the middle contains as a matrix entry the composition
\[
  S^{n+k-1} U \longto[\mu_*] (S^{n+k-1} U)^* \otimes S^{2n+2k-2} U \xrightarrow{\id
\otimes u} (S^{n+k-1} U)^*
\]
and the monad in the second row comes from the maps
\begin{equation}
\label{eq:action3}
  f' := tf \circ (S^n g)^* \quad\text{and}\quad h' := t ( h  - (u \otimes f)\circ \mu_*) \circ ( S^{n+2k-2} g^{-1}).
\end{equation}
Here, the definition of the map $h'$ involves the composition
\[
  S^{n+2k-2} U \longto[ \mu_*] S^{2n+2k-2} U \otimes (S^n U)^* \xrightarrow{u
\otimes f} H^0\bigl(\bbP, \calO_{\bbP}( 1)\bigr).
\]
A straightforward computation shows that Diagram \eqref{eq:iso} commutes indeed and that the induced isomorphism of RS-instanton bundles is symplectic. Note that 
$L_f = L_{f'}$.
\par 
The formulas \eqref{eq:action3} define a linear group action of the semidirect product
\[
  G := ( \GL( U) \times \bbC^* ) \ltimes (S^{2n+2k-2} U)^*
\]
on the vector space containing the pairs $(f, h)$ considered above, which is
\[
 {\rm RS}:= {\rm RS}_{n,k} := \big( S^n U \oplus (S^{n+2k-2} U)^* \big) \otimes H^0\bigl(\bbP, \calO_{\bbP}(1)\bigr),
\]
such that each group element sending $(f, h)$ to $( f', h')$ induces a symplectic isomorphism \eqref{eq:iso} from the monad given by $A_{f, h}$ to the monad given by 
$A_{f', h'}$. This isomorphism is the identity if and only if the triple $(g, t, u)$ is in the image of the roots of unity $\mu_{n+k-1} \subseteq \bbC^*$ under the 
embedding
\begin{eqnarray*}
  \mu_{n+k-1} &\longto& (\GL( U) \times \bbC^*) \ltimes (S^{2n+2k-2} U)^*
\\
\rho &\lma & ( \rho \cdot \id_U, \rho^{-n}, 0).
\end{eqnarray*}
This image is a normal subgroup, and the resulting quotient group
\[
  G := \frac{(\GL( U) \times \bbC^*) \ltimes (S^{2n+2k-2} U)^*}{\mu_{n+k-1}}
\]
still acts on the vector space ${\rm RS}$.
%%%
\begin{prop}
\label{prop:FreeAction}
Given two RS-instanton bundles
  \[
    E = \ker( A_{f, h})/\im( J \circ A_{f, h}^*) \quad\text{and}\quad E' = \ker(
A_{f', h'})/\im( J \circ A_{f', h'}^*)
  \]
with $L_f = L_{f'}$ and a symplectic isomorphism $\varphi\colon E \lra E'$, there is a unique element of $G$ which sends $( f, h)$ to $( f', h')$ and gives back 
$\varphi$ via Diagram \eqref{eq:iso}.
\end{prop}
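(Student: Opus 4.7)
The plan is to lift $\varphi$ to a symplectic isomorphism of the underlying monads via Proposition \ref{prop:Monadology}, ii), and then to use the algebraic structure of $A_{f, h}$ together with Corollary \ref{cor:GL_2} and the hypothesis $L_f = L_{f'}$ to show that this lift must come from a unique triple $(g, t, u)$ modulo $\mu_{n+k-1}$.

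First, Proposition \ref{prop:Monadology}, ii), produces a unique lift of $\varphi$ to a morphism of complexes given by maps $\alpha_{-1}$, $\alpha_0$, $\alpha_1$ on the three terms of the monad. The hypothesis that $\varphi$ is symplectic, combined with the self-duality of the monad under $J$, forces $\alpha_0 \in \Sp(E^0_{n, k})$ and ties $\alpha_{-1}$ to $\alpha_1$ via the canonical identification $(E^1_{n, k})^* \cong E^{-1}_{n, k}$. The uniqueness part of Proposition \ref{prop:Monadology}, ii), immediately gives uniqueness of this lift, which I will later translate into uniqueness of $(g, t, u)$.

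Second, I would show that $\alpha_0$ is block upper-triangular with respect to the Lagrangian decomposition $E^0_{n, k} = (S^{n+k-1} U)^* \otimes \calO_{\bbP} \oplus S^{n+k-1} U \otimes \calO_{\bbP}$. This is where $L_f = L_{f'}$ enters: restricted to $L_f$, the $F$-block of $A_{f, h}$ vanishes identically, so that the first Lagrangian summand of $E^0_{n, k}$, viewed over $L_f$, is precisely the intrinsic sub-bundle responsible for the $n+k$ extra sections of $E|_{L_f}$ that appear in the proof of Proposition \ref{boundsections-sharp}. Since $L_f = L_{f'}$, the induced $\alpha_0$ must preserve this sub-bundle, giving the upper-triangular form of $\alpha_0$.

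Third, the commutativity of the right square of the lifted diagram, combined with the factorizations \eqref{eq:F} and \eqref{eq:H} through the multiplication map $\mu$, produces automorphisms of $S^{k-1} U$, $S^n U$ and $S^{n+k-1} U$ that are compatible with $\mu$. Corollary \ref{cor:GL_2} then supplies a unique $g \in \GL(U)$, up to a common scalar, and a scalar $t \in \bbC^*$ encoding the overall rescaling, such that the diagonal blocks of $\alpha_0$ are $(S^{n+k-1} g^{-1})^*$ and $S^{n+k-1} g$, and $\alpha_1 = t (S^{k-1} g^{-1})^*$, $\alpha_{-1} = t^{-1} S^{k-1} g$. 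The upper-right block of $\alpha_0$ is then constrained by compatibility with $H$ and $F$ to factor as $(\id \otimes u) \circ \mu_*$ for a unique $u \in (S^{2n+2k-2} U)^*$, and a direct comparison reproduces the transformation formulas \eqref{eq:action3} relating $(f, h)$ to $(f', h')$.

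Finally, uniqueness of $(g, t, u)$ modulo $\mu_{n+k-1}$ follows from the uniqueness of the lift in Proposition \ref{prop:Monadology}, ii), combined with the scalar ambiguity $\rho \in \bbC^*$ with $\rho^{n+k-1} = 1$ in Corollary \ref{cor:GL_2}, which is exactly the subgroup $\mu_{n+k-1}$ that is already absorbed into $G$. The main obstacle will be the block upper-triangularity of $\alpha_0$: pinning down the intrinsic characterization of $(S^{n+k-1} U)^* \otimes \calO_{\bbP}$ in terms of $L_f$, and hence its preservation under $\alpha_0$, requires a careful bookkeeping of the extra sections produced in Proposition \ref{boundsections-sharp}. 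Once this is in hand, everything else is a mechanical unwinding of the maps \eqref{eq:F} and \eqref{eq:H} through Corollary \ref{cor:GL_2}.
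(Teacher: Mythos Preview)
Your overall strategy matches the paper's proof: lift $\varphi$ to a symplectic monad isomorphism, use $L_f = L_{f'}$ to get the block upper-triangular form of $\varphi^0$, apply Corollary~\ref{cor:GL_2} to extract $(g,t)$, and then identify the off-diagonal block with some $u$. The first two steps and the invocation of Corollary~\ref{cor:GL_2} are exactly as in the paper.

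There is, however, a genuine gap in your third step. You write that the upper-right block of $\alpha_0$ is ``constrained by compatibility with $H$ and $F$ to factor as $(\id \otimes u) \circ \mu_*$'' and call the rest ``mechanical unwinding''. This is the one place where the argument is \emph{not} mechanical. After reducing to $g = \id_U$, $t = 1$, the off-diagonal block is an arbitrary symmetric form $\tau\colon S^{n+k-1}U \to (S^{n+k-1}U)^*$, and such forms span a space of dimension $\binom{n+k+1}{2}$, whereas those of the shape $(\id \otimes u)\circ \mu_*$ form a subspace of dimension only $2n+2k-1$. So a real argument is needed.

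The paper proceeds as follows: from the second component of the commutativity equation one first deduces that $h - h'$ has image contained in the image of $f$, hence $h - h' = f \circ \delta$ for some $\delta\colon S^{n+2k-2}U \to (S^nU)^*$. Then $\tau$ and $\delta$ together define a single multilinear form on $U^{\otimes(2n+2k-2)}$ which is invariant under $S_{n+k-1}\times S_{n+k-1}$ (because $\tau$ is symmetric on $S^{n+k-1}U \otimes S^{n+k-1}U$) and also under $S_n \times S_{n+2k-2}$ (because $\delta$ lives on $S^nU \otimes S^{n+2k-2}U$). The key observation is that these two subgroups generate all of $S_{2n+2k-2}$, so the form is totally symmetric and descends to a linear form $u\colon S^{2n+2k-2}U \to \bbC$ with $\tau = u \circ \mu$. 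You should supply this symmetric-group generation argument; without it the existence of $u$ is unjustified.
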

%%%
\begin{proof}
The uniqueness follows from the observation that the isomorphism \eqref{eq:iso} is the identity only if the triple $(g, t, u)$ is in the image of $\mu_{n+k-1}$. 
We prove the existence.
\par 
The given isomorphism $\varphi$ lifts uniquely to an isomorphism
  \[
    \varphi^{\bullet}\colon (E^{\bullet}_{n, k}, A_{f, h}) \longto[ \sim]
(E^{\bullet}_{n, k}, A_{f', h'})
  \]
of symplectic monads, which in turn is given by two linear automorphisms
  \[
    \varphi^0 \in \Sp \big( ( S^{n+k-1} U)^* \oplus S^{n+k-1} U, J \big)
\quad\text{and}\quad \varphi^1 \in \GL \big( ( S^{k-1} U)^* \big)
  \]
satisfying the following equation:
  \begin{equation}
   \label{eq:A}
    A_{f', h'} \circ \varphi^0 = \varphi^1 \circ A_{f, h}.
  \end{equation}
%%%
Let's first restrict this equation to $L := L_f = L_{f'} \subset \bbP$. The first summand $( S^{n+k-1} U)^*$ of $H^0( \bbP, E^0_{n, k})$ consists exactly of the 
sections that vanish on $L$. It follows that this summand has to be preserved by $\varphi^0$, so
  \[
    \varphi^0 = \begin{pmatrix} (\gamma^{-1})^* & \tau\\0 & \gamma \end{pmatrix}
  \]
for an appropriate linear automorphism $\gamma \in \GL( S^{n+k-1} U)$ and some symmetric bilinear form $\tau\colon S^{n+k-1} U \lra ( S^{n+k-1} U)^*$.
\par 
The assumption $L_f = L_{f'}$ means that the linear embeddings $f$ and $f'$ have the same image, so there is a linear automorphism $\beta \in \GL( S^n U)$ with
$f' = f \circ \beta^*$. We can write $\varphi^1 = (\alpha^{-1})^*$ for a unique element $\alpha \in \GL( S^{k-1} U)$.
\par 
Now, the first component of Equation \eqref{eq:A} simplifies to $\mu \circ (\alpha \otimes \beta) = \gamma \circ \mu$. This allows us to apply Corollary \ref{cor:GL_2}.
Multiplying the resulting element $g \in \GL( U)$ by a nonzero scalar, if necessary, we can even achieve $\gamma = S^{n+k-1} g$ besides $S^{k-1} g \in \bbC^* \cdot \alpha$ 
and $S^n g \in \bbC^* \cdot \beta$. Comparing the scalars, we get more precisely $\alpha = t^{-1} \cdot S^{k-1} g$ and $\beta = t \cdot S^n g$ for some $t \in \bbC^*$.
\par 
We replace the pair $(f, h)$ by its image under the group element given by the triple $( g, t, 0)$. This reduces us without loss of generality to the case where
  \[
    f = f', \quad \varphi^0 = \begin{pmatrix} \id & \tau\\0 & \id \end{pmatrix}
\quad\text{and}\quad \varphi^1 = \id
  \]
for some symmetric bilinear form $\tau\colon S^{n+k-1} U \lra ( S^{n+k-1} U)^*$. In this situation, the second component of Equation \eqref{eq:A} means that the diagram
\[
\xymatrix{
    S^{k-1} U \otimes S^{n+k-1} U \ar[r]^-{\mu} \ar[d]_{\tau} & S^{n+2k-2} U
\ar[r]^-{h - h'}
      & H^0\bigl( \bbP, \calO_{\bbP}( 1)\bigr) \ar@{=}[d]\\
    S^{k-1} U \otimes (S^{n+k-1} U)^* \ar[r]^-{\mu^*} & (S^n U)^* \ar[r]^-f &
H^0\bigl(\bbP, \calO_{\bbP}( 1)\bigr)
  }
\]
commutes. Since $\mu$ is surjective, we conclude that the image of $h-h'$ is contained in the image of $f$. As $f$ is injective, this implies $h-h' = f \circ \delta$ for
some linear map
  \[
    \delta\colon S^{n+2k-2} U \longto (S^n U)^*.
  \]
Using this, the previous commutative diagram yields the commutative diagram
  \[ \xymatrix{
    S^n U \otimes S^{k-1} U \otimes S^{n+k-1} U \ar[r]^-{\id \otimes \mu}\ar[d]_{\mu \otimes \id} &
    S^n U \otimes S^{n+2k-2} U \ar[d]^{\delta}
\\
    S^{n+k-1} U \otimes (S^{n+k-1} U) \ar[r]^-{\tau} & \bbC.
    }
  \]
The composition $\tau \circ (\mu \otimes \id) = (\id \otimes \mu) \circ \delta$ in this diagram is a multilinear form
  \[
    U^{\otimes (2n+2k-2)} = U^{\otimes n} \otimes U^{\otimes (k-1)} \otimes
U^{\otimes (n+k-1)} \longto \bbC
  \]
which is invariant under the two subgroups $S_{n+k-1} \times S_{n+k-1}$ and $S_n\times S_{n+2k-2}$ in the symmetric group $S_{2n+2k-2}$. But these two subgroups generate 
the full group $S_{2n+2k-2}$, so the multilinear form descends to a linear form
  \[
    u\colon S^{2n+2k-2} U \longto \bbC
  \]
such that $\delta = u \circ \mu$ and $\tau = u \circ \mu$. It follows that the isomorphism $\varphi^{\bullet}$ comes from the element in $G$ given by the triple 
$(\id_U, 1, u)$.
\end{proof}
%%%
\begin{cor}
\label{cor:BirRSStack}
The quotient stack $[{\rm RS}/G]$ is birational to the moduli stack of RS-instanton bundles $E$ with charge $k$ over $ \bbP^{2n+1}$.
\end{cor}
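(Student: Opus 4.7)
The plan is to realize both stacks as generically isomorphic by exhibiting a $G$-invariant map from a dense open substack of ${\rm RS}$ to the moduli stack $\calM^{\text{RS}}$ of RS-instanton bundles of charge $k$ on $\bbP$, and then checking that on a further dense open substack it becomes an isomorphism. The construction of the symplectic monad \eqref{eq:monad21} from a pair $(f,h)$ is evidently functorial in families, so it defines a morphism of stacks $\Phi\colon [{\rm RS}^\circ/G] \longto \calM^{\text{RS}}$, where ${\rm RS}^\circ \subseteq {\rm RS}$ is the open locus on which $A_{f,h}$ is surjective and the cohomology of \eqref{eq:monad21} is an RS-instanton bundle. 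The fact that $\Phi$ descends through the $G$-action is precisely the content of Diagram \eqref{eq:iso}, and essential surjectivity is built into Definition \ref{defRSinstanton}: by construction, every RS-instanton bundle arises from some pair $(f,h)$.

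The main step is to establish generic injectivity, which I would do by restricting to the dense open substack ${\rm RS}^{\circ\circ} \subseteq {\rm RS}^\circ$ where the associated bundle $E$ is \emph{general} in the sense of Proposition \ref{boundsections-sharp}. On this locus, $E$ carries a \emph{unique} unstable $n$-dimensional linear subspace $L \subseteq \bbP$ attaining the maximal order of instability, and the proof of Proposition \ref{boundsections-sharp} shows that this $L$ must coincide with $L_f = \{f_0 = \cdots = f_n = 0\}$. Consequently, if $(f,h)$ and $(f',h')$ both belong to ${\rm RS}^{\circ\circ}$ and $\varphi\colon E \longto E'$ is a symplectic isomorphism of the associated bundles, the uniqueness of $L$ forces $L_f = L_{f'}$. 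At this point Proposition \ref{prop:FreeAction} applies directly and supplies the unique element of $G$ sending $(f,h)$ to $(f',h')$ and giving back $\varphi$ through Diagram \eqref{eq:iso}. This is precisely the fully faithfulness of $\Phi$ on ${\rm RS}^{\circ\circ}$, and combined with essential surjectivity (which persists on any dense open, as a generic RS-instanton is general), it identifies $[{\rm RS}^{\circ\circ}/G]$ with a dense open substack of $\calM^{\text{RS}}$.

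The step I expect to require the most care is the compatibility between symplectic isomorphisms and vector-bundle isomorphisms: Proposition \ref{prop:FreeAction} is formulated for symplectic $\varphi$, whereas $\calM^{\text{RS}}$ most naturally parameterizes the underlying bundles. This is handled by Remark \ref{rem:InstSimple}: any instanton bundle is simple, so its symplectic form is unique up to $\bbC^*$, and an arbitrary vector-bundle isomorphism $\varphi$ can be rescaled to a symplectic one. The scalar ambiguity is absorbed by the $\bbC^*$-factor in $G$ (and becomes trivial modulo $\mu_{n+k-1}$), so no new automorphisms are introduced and the generic fiber analysis is unaffected. The remaining verifications — that ${\rm RS}^{\circ\circ}$ is indeed non-empty and dense, and that the image is dense in $\calM^{\text{RS}}$ — are immediate from Remark \ref{rem:RSStable} and Proposition \ref{boundsections-sharp} respectively.
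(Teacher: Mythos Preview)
Your proposal is correct and follows the same line as the paper, which in fact states the corollary without proof as an immediate consequence of Proposition~\ref{prop:FreeAction} together with the generic uniqueness of the distinguished $n$-plane from Proposition~\ref{boundsections-sharp}. You have spelled out precisely the details the paper leaves implicit: the monad construction gives the morphism, uniqueness of $L$ forces $L_f=L_{f'}$ generically, and then Proposition~\ref{prop:FreeAction} yields full faithfulness; one minor remark is that in the paper's setting the moduli stack is understood as a stack of \emph{symplectic} bundles (note the generic $B\mu_2$ appearing in the subsequent corollary), so your third paragraph, while a reasonable sanity check, addresses a concern that does not actually arise.
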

%%%
\begin{rem}
Note that the quotient stack  $[{\rm RS}/G]$ has dimension
\begin{eqnarray*}
  \dim( {\rm RS}) - \dim( G) & = & \bigl((n+1) + (n+2k-1)\bigr) \cdot (2n+2) 
  - \bigl(4 + 1 + (2n+2k-1)\bigr)
  \\
  & = & (4n+2)\cdot k + 4n^2 + 2n - 4.
\end{eqnarray*}
In the special case $n = 1$, this coincides with the dimension $6k+2$ of the moduli space introduced by Rao and by Skiti in \cite{rao} and \cite{skiti}, respectively.
\end{rem}
%%%
Our next aim is to determine the birational type of this moduli stack. For that, we choose a basis of $H^0( \bbP, \calO_{\bbP}( 1))$. This provides a $G$-equivariant 
isomorphism
\[
  {\rm RS} \cong (S^n U)^{\oplus (2n+2)} \oplus ((S^{n+2k-2} U)^*)^{\oplus (2n+2)}.
\]
We start with the quotient of the first summand $(S^n U)^{\oplus (2n+2)}$ modulo the subgroup
\[
  (\GL( U) \times \bbC^*)/\iota( \mu_{n+k-1})
\]
of $G$, whose definition involves the embedding
\begin{eqnarray*}
  \iota\colon \bbC^* &\longto& \GL( U) \times \bbC^*
\\
 \lambda &\lma & (\lambda \cdot \id_U, \lambda^{-n}).
\end{eqnarray*}
Here, the group action is the restriction of the action \eqref{eq:action3}. So the group element represented by $g \in \GL( U)$ and $t \in \bbC^*$ acts on 
$(S^n U)^{\oplus (2n+2)}$ as the automorphism
%%%
\begin{eqnarray}
\label{eq:action4}
  (S^n U)^{\oplus (2n+2)} &\longto &(S^n U)^{\oplus (2n+2)}
\\
\nonumber
 f &\lma &(S^n g)( tf).
\end{eqnarray}
%%%
\begin{prop}
\label{prop:BirTypeRSStack}
The stack quotient of the vector space $(S^n U)^{\oplus (2n+2)}$ modulo the above linear group action of $(\GL( U) \times \bbC^*)/\iota( \mu_{n+k-1})$ is
\par
{\rm i)} birational to $\bbA^{2n^2 + 4n - 2} \times B\bbC^*$, if $n$ or $k$ is odd;
\par
{\rm ii)} birational to $\bbA^{2n^2 + 4n - 7} \times [\End( U)^2/\GL( U)]$, if $n$ and $k$ are even.
\end{prop}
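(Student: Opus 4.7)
The plan is to decompose $[V/H]$, where $V := (S^n U)^{\oplus(2n+2)}$ and $H$ is the acting group, as a $\Gm$-gerbe over a rational base and then to analyze the gerbe via characters of $H$. The subgroup $K := \iota(\bbC^*)/\iota(\mu_{n+k-1}) \cong \Gm$ is central in $H$ and acts trivially on $V$, since $\iota(\lambda) = (\lambda\id_U, \lambda^{-n})$ acts as the scalar $\lambda^{-n}\cdot\lambda^n = 1$. The homomorphism $(g,t)\mapsto t\cdot S^n g$ identifies $H/K$ with the image $\GL(U)/\mu_n$ of the $n$-th symmetric-power representation inside $\GL(S^n U)$, so that $[V/H]$ is a $\Gm$-gerbe over $[V/(\GL(U)/\mu_n)]$.

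Next, I would apply the no-name lemma to the splitting $V = (S^n U)^{\oplus 2} \oplus (S^n U)^{\oplus 2n}$. For every $n \geq 1$, a generic pair $(f_1, f_2) \in (S^n U)^{\oplus 2}$ has trivial $\bbPGL(U)$-stabilizer (for $n \geq 2$ because the $2n \geq 4$ associated zeros on $\bbP^1$ are in general position; for $n=1$ because two generic vectors span $U$), so $\GL(U)/\mu_n$ acts generically freely on the first summand. The no-name lemma then yields, birationally,
\[
  [V/(\GL(U)/\mu_n)] \simeq [(S^n U)^{\oplus 2}/(\GL(U)/\mu_n)] \times (S^n U)^{\oplus 2n},
\]
and the first factor is a rational variety of dimension $2n-2$ by the classical invariant theory of pairs of binary $n$-forms. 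Adding dimensions, the base is birational to $\bbA^{2n^2+4n-2}$.

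To finish, I would classify the characters of $H$: they are the maps $(g,t)\mapsto \det(g)^a t^b$ with $(n+k-1)\mid 2a-nb$, and under the isomorphism $K\cong\Gm$ given by $\iota(\lambda)\mapsto\lambda^{n+k-1}$ the restriction of such a character to $K$ is the $(2a-nb)/(n+k-1)$-th power map. The $\Gm$-gerbe is trivial iff some character restricts to the identity on $K$, which is solvable iff $\gcd(2,n)\mid n+k-1$, equivalently iff $n$ or $k$ is odd; this settles case i). In case ii) both $n$ and $k$ are even, $n+k-1$ is odd, and the smallest attainable non-zero exponent is $2$, realised by $\chi := \det(g)^{n+k-1}$. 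The resulting homomorphism $\chi\times\mathrm{pr}\colon H \to \Gm\times\GL(U)/\mu_n$ is an isogeny with kernel $\mu_2\subset K$, so $[V/H]$ is birationally a $\mu_2$-gerbe over $\bbA^{2n^2+4n-2}\times B\Gm$ whose Brauer class has order and index $2$. The same invariants hold for $[\End(U)^2/\GL(U)]$, which is a $\Gm$-gerbe over the rational $5$-dimensional variety $\End(U)^2/\bbPGL(U)$ with Brauer class represented by the tautological degree-$2$ Azumaya algebra $\End(U)$. The main obstacle is to identify these two gerbes birationally in case ii): numerical coincidence of Brauer invariants does not suffice, and I expect that this requires exhibiting a rank-$2$ vector bundle on a dense open of $[V/(\GL(U)/\mu_n)]$ whose endomorphism algebra realises the same Azumaya class as $\End(U)$, together with a final application of the no-name lemma to absorb the $5$-dimensional affine factor.
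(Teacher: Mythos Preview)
Your structural setup is correct and matches the paper: one has the central extension $1 \to \Gm \to H \to \GL(U)/\mu_n \to 1$ with $\Gm$ acting trivially on $V$, and the problem reduces to analysing this $\Gm$-gerbe over a rational base. For case~i) the paper argues as you do, by exhibiting an explicit splitting character ($[g,t]\mapsto[t\det(g)^{(n+1)/2}]$ when $n$ is odd, $[g,t]\mapsto[\det(g)^{(n+k-1)/2}]$ when $n$ is even and $k$ is odd). For the rationality of the base, however, the paper does not invoke invariant theory of pairs of binary forms; it instead writes down an explicit isomorphism $\GL(U)/\mu_n\cong\GL(U)$ (resp.\ $\cong\bbPGL(U)\times\Gm$) in the two sub-cases and applies the no-name lemma to a representation containing an open orbit, which is more self-contained than your appeal to classical results.

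The genuine gap is exactly the one you flag in case~ii): knowing that the Brauer class has order and index $2$ does not determine the birational type of the gerbe, and your proposed fix (constructing an ad hoc rank-$2$ bundle realising the Azumaya class) is the hard way round. The paper circumvents this entirely. Since $n+k-1$ is odd when $n$ and $k$ are both even, the map
\[
  [g,t]\ \longmapsto\ \bigl(\det(g)^{(n+k-2)/2}\cdot g,\ \det(g)^{n/2}\cdot t\bigr)
\]
is a well-defined group \emph{isomorphism} $H \xrightarrow{\ \sim\ } \GL(U)\times\bbC^*$, and it carries $K=\iota(\bbC^*)/\iota(\mu_{n+k-1})$ onto $\{(\lambda\cdot\id_U,1):\lambda\in\bbC^*\}$, the centre of the $\GL(U)$-factor. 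Hence $H/K\cong\bbPGL(U)\times\bbC^*$, and the no-name lemma (applied with the common kernel $K$) lets one replace $V$ by the $9$-dimensional representation $\End(U)^2\oplus\bbC$ on which $\bbPGL(U)\times\bbC^*$ also acts generically freely. The stack quotient of the latter by $H\cong\GL(U)\times\bbC^*$ is visibly $[\End(U)^2/\GL(U)]$, and the leftover trivial summand has dimension $(n+1)(2n+2)-9 = 2n^2+4n-7$. No abstract Brauer comparison is needed: this explicit group isomorphism is the missing key idea.
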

%%%
Here, $B \bbC^*$ is the classifying stack of $\bbC^*$, and $[\End(U)^2/\GL( U)]$ is the stack quotient of $\End( U)^2$ modulo the linear action of $\GL( U)$ by 
simultaneous conjugation.
%%%
\begin{proof}
We consider the central extension of linear algebraic groups
  \begin{equation} \label{eq:extension}
    1 \longto \bbC^* \cong \frac{\bbC^*}{\mu_{n+k-1}} \longto[ \iota] \frac{\GL( U)
\times \bbC^*}{\iota( \mu_{n+k-1})}
      \longto[ \pi] \frac{\GL( U)}{\mu_n} \longto 1
  \end{equation}
where $\pi$ sends the class of a pair $(g, t)$ to the class of $t^{1/n} \cdot g \in \GL( U)$; note that the $n$th root $t^{1/n}$ is well-defined modulo $\mu_n$.
Our group action \eqref{eq:action4} is the composition of $\pi$ and the natural action of $\GL( U)/\mu_n$ on $(S^n U)^{\oplus (2n+2)}$ via $n$th symmetric powers.
\par 
Suppose that $n$ is odd. Then, the central extension \eqref{eq:extension} splits, because the map
\begin{eqnarray*}
    \frac{\GL( U) \times \bbC^*}{\iota( \mu_{n+k-1})} &\lra& \frac{\bbC^*}{\mu_{n+k-1}}
\\
      {}[g, t] &\lma&  \left[t \cdot \det(g)^{\frac{n+1}{2}}\right]
\end{eqnarray*}
is a left inverse of $\iota$. Hence, the stack quotient in question is birational to $B\bbC^*$ times the stack quotient of $(S^n U)^{\oplus (2n+2)}$ modulo $\GL(U)/\mu_n$.
Note that already the action of $\GL(U)/\mu_n$ on $(S^n U)^{\oplus 2}$ is generically free. Moreover, the isomorphism
\begin{eqnarray*}
    \GL(U)/\mu_n &\longto[ \sim]&  \GL( U)
\\
{}[g] & \lma & \det(g)^{\frac{n-1}{2}} \cdot g
\end{eqnarray*}
%%%
shows that $\GL(U)/\mu_n$ also has a generically free representation of dimension $4$ containing an open orbit. The no-name lemma (\cite{BK}, Lemma 1.2) allows us to 
replace the representation $(S^n U)^{\oplus (2n+2)}$ by one which contains this $4$-dimensional representation as a direct summand, and then to conclude that the quotient 
is rational, as required.
\par 
Now, suppose that $n$ is even and $k$ is odd. Then, \eqref{eq:extension} also splits, since the map
\begin{eqnarray*}
    \frac{\GL( U) \times \bbC^*}{\iota( \mu_{n+k-1})} &\longto & \frac{\bbC^*}{\mu_{n+k-1}}
\\
      {}[g, t] & \lma &\left[ \det(g)^{\frac{n+k-1}{2}}\right]
\end{eqnarray*}
is a left inverse of $\iota$. Hence, the stack quotient in question is again birational to $B\bbC^*$ times the stack quotient of $(S^n U)^{\oplus (2n+2)}$ modulo 
$\GL(U)/\mu_n$. The isomorphism
\begin{eqnarray*}
    \GL(U)/\mu_n &\longto[ \sim]& {\rm PGL}( U) \times \bbC^*
\\
{}[g] &\lma & \left[g, \det( g)^{\frac{n}{2}})\right]
\end{eqnarray*}
provides a generically free action of $\GL(U)/\mu_n$ on $\End( U)^2 \oplus \bbC$, such that the quotient is rational. Using the no-name lemma as before, it follows that 
the quotient of $(S^n U)^{\oplus (2n+2)}$ modulo $\GL(U)/\mu_n$ is also rational, as required.
\par 
Finally, suppose that $n$ and $k$ are both even. We use the group isomorphism
\begin{eqnarray*}
    \frac{\GL( U) \times \bbC^*}{\iota( \mu_{n+k-1})} & \longto[ \sim] & \GL( U) \times\bbC^*
\\
     {}[g, t] & \lma &  \bigl( \det( g)^{\frac{n+k-2}{2}} \cdot g,\det( g)^{\frac{n}{2}} \cdot t\bigr)
\end{eqnarray*}
whose composition with the embedding $\iota$ of $\bbC^* \cong \bbC^*/\mu_{n+k-1}$ is the embedding
\begin{eqnarray*}
    \bbC^* &\longto & \GL( U) \times \bbC^*
\\
\lambda &\lma & ( \lambda \cdot\id_U, 1).
\end{eqnarray*}
Thus, the cokernel of $\iota$ is isomorphic to ${\rm PGL}( U) \times \bbC^*$, which again acts generically freely on $\End( U)^2 \oplus \bbC$. The no-name lemma allows 
us to replace the representation $(S^n U)^{\oplus (2n+2)}$ by the direct sum of $\End( U)^2 \oplus \bbC$ and a trivial representation of dimension
  \[
    (n+1)\cdot (2n+2) - 2 \cdot 4 - 1 = 2n^2 + 4n - 7,
  \]
whose quotient modulo $\GL( U) \times \bbC^*$ is precisely as claimed.
\end{proof}
%%%
\begin{cor}
The moduli stack of RS-instanton bundles with charge $k$ over $ \bbP^{2n+1}$ is
\par
{\rm i)} birational to $\bbA^{(4n+2)k + 4n^2 + 2n - 4} \times B\mu_2$, if $n$ or $k$ is odd;
\par
{\rm ii)} birational to $\bbA^{(4n+2)k + 4n^2 + 2n - 9} \times [\End( U)^2/\SL(U)]$, if $n$ and $k$ are even.
\end{cor}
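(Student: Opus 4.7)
The plan is to factor $[{\rm RS}/G]$ through the structural decomposition of $G$ --- first quotient by the unipotent radical $U_0 := (S^{2n+2k-2}U)^*$, then by the reductive residue $E := G/U_0 \cong (\GL(U) \times \bbC^*)/\iota(\mu_{n+k-1})$ --- and combine the result with Proposition \ref{prop:BirTypeRSStack}.

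By Corollary \ref{cor:BirRSStack}, the moduli stack is birational to $[{\rm RS}/G]$, where ${\rm RS} = X \oplus Y_0$ with $X := (S^nU)^{\oplus(2n+2)}$ and $Y_0 := ((S^{n+2k-2}U)^*)^{\oplus(2n+2)}$. The subgroup $U_0 \lhd G$ acts trivially on $X$ and by the $f$-dependent translation $h \mapsto h - (u \otimes f) \circ \mu_*$ on $Y_0$. For $f$ of full rank, the map $u \mapsto (u \otimes f) \circ \mu_*$ is injective, as follows from surjectivity of the multiplication $S^{k-1}U \otimes S^{n+k-1}U \to S^{n+2k-2}U$ combined with the injectivity of $f$. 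Hence $U_0$ acts freely on a dense open of ${\rm RS}$, so $[{\rm RS}/U_0]$ is an affine bundle of rank $M := (n+2k-1)(2n+2) - (2n+2k-1) = 2n^2+4nk+2k-2n-1$ over $X$, which trivializes birationally to $X \times \bbA^M$ with a compatible linear $E$-action.

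The key central subgroup of $E$ is identified using \eqref{eq:action3}: the element $\iota(\lambda) = [\lambda\id_U, \lambda^{-n}]$ acts trivially on $X$ and by multiplication by $\lambda^{-(2n+2k-2)}$ on $Y_0$ (hence on $\bbA^M$). After parameterising $C := \iota(\bbC^*)/\iota(\mu_{n+k-1}) \cong \bbC^*$ via the $(n+k-1)$-st power map, this becomes the weight $-2$ character, so $C$ is exactly the kernel of the $E$-action on $X$ and the generic stabilizer of the diagonal $E$-action on $X \times \bbA^M$ is $\mu_2 = \ker(C \xrightarrow{-2} \bbC^*)$. Consequently $[\bbA^M/C] \simeq \bbA^{M-1} \times B\mu_2$ birationally, and $C$ (resp.\ its subgroup $\mu_2$) acts trivially on the variety $X$ (resp.\ on the $B\mu_2$-gerbe).

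Combining these ingredients with Proposition \ref{prop:BirTypeRSStack} and the no-name lemma (\cite{BK}, Lemma 1.2) yields the result. In case (i), where $n$ or $k$ is odd, the proof of Proposition \ref{prop:BirTypeRSStack}(i) splits $E \cong C \times H$ with $H \cong \GL(U)/\mu_n$ acting generically freely on $X$ and rational quotient $[X/H] \simeq \bbA^{2n^2+4n-2}$; since $H$ commutes with $C$ it acts trivially on $B\mu_2$, and the trivial $H$-equivariant vector bundle $X \times \bbA^M \to X$ descends birationally to a trivial projective bundle of fibre $\bbA^{M-1}$ over $[X/H]$, yielding $[{\rm RS}/G] \simeq \bbA^{(4n+2)k+4n^2+2n-4} \times B\mu_2$. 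In case (ii), where $n$ and $k$ are both even, Proposition \ref{prop:BirTypeRSStack}(ii) gives $[X/E] \simeq \bbA^{2n^2+4n-7} \times [\End(U)^2/\GL(U)]$; factoring out the $BC \simeq B\bbC^*$ coming from the trivial action of the scalar $\bbC^* \subset \GL(U)$ by conjugation produces $[X/(E/C)] \simeq \bbA^{2n^2+4n-7} \times [\End(U)^2/\bbPGL(U)]$, so the same argument delivers $\bbA^{(4n+2)k+4n^2+2n-9} \times [\End(U)^2/\bbPGL(U)] \times B\mu_2$. Using the identification $[\End(U)^2/\bbPGL(U)] \times B\mu_2 \simeq [\End(U)^2/\SL(U)]$ --- valid since the central $\mu_2 = \{\pm\id_U\} \subset \SL(U)$ acts trivially on $\End(U)^2$ by conjugation --- gives the stated result. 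The main obstacle is the careful tracking of the $\mu_2$-gerbe over the coarse quotient $[{\rm RS}/(G/\mu_2)]$, which is neutral in case (i) but non-neutral in case (ii); the crucial observation is that this nontriviality is precisely captured by replacing $\bbPGL(U)$ with $\SL(U)$ in the final description.
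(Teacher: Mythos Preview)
Your argument in case (i) is essentially sound and parallel to the paper's, though the claim that $[{\rm RS}/U_0]$ ``trivializes birationally to $X \times \bbA^M$ with a compatible linear $E$-action'' deserves a word of justification: the quotient is an $E$-equivariant vector bundle over $X$, and the no-name lemma applied to the generically free $H$-action (using the splitting $E \cong C \times H$) gives the desired trivialization over a dense open, with $C$ still acting by the single weight $-2$ on the fibre.

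Case (ii), however, contains a genuine error. Your ``same argument'' requires first quotienting by $E/C$ and then by $C$, but in this case the extension
\[
  1 \lra C \lra E \cong \GL(U)\times\bbC^* \lra E/C \cong {\rm PGL}(U)\times\bbC^* \lra 1
\]
does \emph{not} split (the middle term has $\pi_1 = \bbZ$ while the right-hand side has $\pi_1 = \bbZ \times \bbZ/2$), so $E/C$ is not a subgroup of $E$ and does not act on the total space $\calV$. Consequently there is no way to reach the intermediate description $\bbA^{N} \times [\End(U)^2/{\rm PGL}(U)] \times B\mu_2$. Worse, the final identification you invoke,
\[
  [\End(U)^2/{\rm PGL}(U)] \times B\mu_2 \simeq [\End(U)^2/\SL(U)],
\]
is \emph{false}: the right-hand side is a $\mu_2$-gerbe over the left-hand factor, but it is the nontrivial gerbe carrying the Brauer class of the generic degree-$2$ division algebra. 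You yourself note in your last sentence that the gerbe is non-neutral in case (ii), which directly contradicts the displayed identification. The two errors happen to cancel, but the argument as written is circular.

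What the paper does instead is to view $\Phi\colon [{\rm RS}/G] \to [X/E]$ as a vector bundle $\calV$ of weight $2$ over a $\bbC^*$-gerbe, and then invoke a key lemma (\cite{par}, Lemma 4.10): any two vector bundles over a $\bbC^*$-gerbe with the same rank and the same weight are isomorphic over a dense open substack. This allows one to replace $\calV$ by $\det(\calU)^{\oplus M}$, where $\calU$ is the tautological rank-$2$ bundle on $[\End(U)^2/\GL(U)]$; the total space of the latter is then computed directly and identified with $[\End(U)^2/\SL(U)]$ up to affine factors. This comparison lemma is the missing ingredient in your approach: it is precisely what lets one transport the weight-$2$ bundle across the nontrivial gerbe without ever pretending the gerbe splits.
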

%%%
\begin{proof}
The projection onto the first summand
  \[
    {\rm RS} \cong (S^n U)^{\oplus (2n+2)} \oplus \bigl((S^{n+2k-2} U)^*\bigr)^{\oplus (2n+2)} \longto (S^n U)^{\oplus (2n+2)}
  \]
descends to a $1$-morphism of stacks
  \[
    \Phi\colon \left[ \frac{\rm RS}{G} \right] \longto \left[ \frac{(S^n U)^{\oplus (2n+2)}}{(\GL( U) \times \bbC^*)/\iota( \mu_{n+k-1})} \right].
  \]
The latter stack is generically a gerbe with band $\bbC^*$ due to the previous proposition. The $1$-morphism $\Phi$ is a vector bundle $\calV$ with fibers
  \[
    \bigl((S^{n+2k-2} U)^*\bigr)^{\oplus (2n+2)} \big/ f_*\bigl( (S^{2n+2k-2} U)^*\bigr),
  \]
on which the automorphism groups $\bbC^*$ of the gerbe act with weight $2$. Its rank is
  \[
    \rank( \calV) = (n+2k-1)\cdot (2n+2) - (2n+2k-1) = (4n+2)\cdot k + 2n^2 - 2n - 1.
  \]
\par 
Now, suppose that $n$ or $k$ is odd. Let $\calL$ denote the tensor square of the universal line bundle over the classifying stack $B \bbC^*$. Then, the automorphism 
groups $\bbC^*$ of this neutral gerbe $B \bbC^*$ act with the same weight $2$ on the fibers of $\calL$. But any two vector bundles over a $\bbC^*$-gerbe with the same 
rank and the same weight are isomorphic over some dense open substack of that $\bbC^*$-gerbe, according to \cite{par}, Lemma 4.10. Using Proposition 
\ref{prop:BirTypeRSStack}, we conclude that the total space $[{\rm RS}/G]$ of $\calV$ is birational to $\bbA^{2n^2 + 4n - 2}$ times the total space of the vector bundle
  \[
    \calL^{\oplus (4n+2)k + 2n^2 - 2n - 1} \longto B \bbC^*.
  \]
The projection of the line bundle $\calL$ onto $B \bbC^*$ coincides with the natural morphism
  \[
    \phi\colon B \mu_2 \longto B \bbC^*.
  \]
Hence, the total space of $\calL^{\oplus N+1}$ over $B \bbC^*$ is the total space of $\phi^* \calL^{\oplus N}$ over $B \mu_2$. But this pullback vector bundle is trivial, 
because $\mu_2 \subset \bbC^*$ acts trivially on the fibers of $\calL$. This shows that the total space of $\calL^{\oplus N+1}$ over $B \bbC^*$ is isomorphic to 
$\bbA^N \times B \mu_2$. It follows that the stack $[{\rm RS}/G]$ is birational to
  \[
    \bbA^{2n^2 + 4n - 2} \times \bbA^{(4n+2)k + 2n^2 - 2n - 2} \times B \mu_2
  \]
in the case where $n$ or $k$ is odd.
\par 
Now, suppose that $n$ and $k$ are even. Let $\calU$ denote the universal vector bundle of rank $2$ over the quotient stack $[\End( U)^2/\GL( U)]$, or in other words the 
pullback of the universal vector bundle of rank $2$ over the classifying stack $B \GL( U)$. On the line bundle $\det( \calU)$, the automorphism groups $\bbC^*$ of the 
dense open $\bbC^*$-gerbe in $[\End( U)^2/\GL( U)]$ act again with weight $2$. Combining \cite{par}, Lemma 4.10, and Proposition \ref{prop:BirTypeRSStack} as before, 
we conclude that the stack $[{\rm RS}/G]$ is birational to $\bbA^{2n^2 + 4n - 7}$ times the total space of the vector bundle
  \[
    \det( \calU)^{\oplus (4n+2)k + 2n^2 - 2n - 1} \longto[] \big[\End( U)^2/\GL( U)\big].
  \]
The projection of the line bundle $\det( \calU)$ onto $[\End( U)^2/\GL( U)]$ coincides with the natural morphism
  \[
    \psi\colon \big[\End( U)^2/\SL( U)\big] \longto[] \big[\End( U)^2/\GL( U)\big].
  \]
Hence, the total space of $\det( \calU)^{\oplus N+1}$ over $[\End( U)^2/\GL( U)]$ is the total space of $\psi^* \det( \calU)^{\oplus N}$ over $[\End( U)^2/\SL( U)]$.
But this pullback vector bundle is now trivial over a dense open substack of $[\End( U)^2/\SL( U)]$, because the generic stabilizer $\mu_2 \subset \SL( U)$ acts trivially 
on the fibers of $\det( \calU)$; cf.\ \cite{par}, Corollary 4.8. This shows that the total space of $\det( \calU)^{\oplus N+1}$ over $[\End( U)^2/\GL( U)]$ is birational 
to $\bbA^N \times [\End( U)^2/\SL( U)]$. It follows that the stack $[{\rm RS}/G]$ is birational to
  \[
    \bbA^{2n^2 + 4n - 7} \times \bbA^{(4n+2)k + 2n^2 - 2n - 2} \times \big[\End( U)^2/\SL( U)\big]
  \]
in the case where $n$ and $k$ are both even.
\end{proof}
%%%
Next, we would like to construct the moduli space for Rao--Skiti instanton bundles as an algebraic variety. We have already set up the group action for this moduli 
problem. Since the group $G$ that acts is not reductive, it seems that the construction of the quotient is not as straightforward as in the setting of 't Hooft instanton 
bundles. However, we can easily reduce to a quotient problem for a reductive group. That problem can be solved with the construction of \cite{OS} and some descent theory. 
So, let us make all this precise. Denote by
\[
 {\rm RS}^{\rm s}_{n,k}\subset {\rm RS}_{n,k}
\]
the $G$-invariant open subset of RS-monads as in \eqref{eq:monad21} whose cohomology is a stable symplectic instanton bundle. By Remark \ref{rem:RSStable}, i), this set 
is non-empty.
%%%
\begin{prop}
\label{prop:RSModuli}
For given positive integers $n$ and $k$, the moduli space
\[
 {\rm RSI}_{\bbP^{2n+1}}(k) := {\rm RS}_{n.k}^{\rm s}/G
\]
of stable RS-instanton bundles with charge $k$ on $\bbP^{2n+1}$ exists as a smooth quasi-projective variety. It is equipped with a generically injective morphism
\[
 \iota\colon {\rm RSI}_{\bbP^{2n+1}}(k)\lra {\rm MI}_{\bbP^{2n+1}}(k)
\]
to the moduli space of stable symplectic instanton bundles with charge $k$ on $\bbP^{2n+1}$.
\end{prop}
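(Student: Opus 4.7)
The plan is to construct the geometric quotient ${\rm RS}^{\rm s}_{n,k}/G$ by descent from the principal $R$-bundle $\widetilde{\rm SI}_k \lra {\rm MI}_{\bbP^{2n+1}}(k)$ of Remark \ref{rem:QuotPriBund}, where $R := {\rm Sp}_{2(n+k)}(\CC) \times {\rm PGL}(V)$, thereby circumventing the non-reductivity of $G$.

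First, I would construct a $G$-invariant morphism $\phi\colon {\rm RS}^{\rm s}_{n,k} \lra {\rm MI}_{\bbP^{2n+1}}(k)$ by sending $(f,h)$ to the projectivised symplectic monad \eqref{eq:monad21}, landing in $\widetilde{\rm SI}_k$, and then projecting via the principal $R$-bundle map $\widetilde{\rm SI}_k \lra \widetilde{\rm SI}_k / R = {\rm MI}_{\bbP^{2n+1}}(k)$. The $G$-invariance of $\phi$ is precisely the content of Diagram \eqref{eq:iso}.

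Next, I would quotient $G$ in two stages along the extension
\[
  1 \lra N \lra G \lra G/N \lra 1,
\]
where $N := (S^{2n+2k-2}U)^*$ is the unipotent radical and $G/N = (\GL(U) \times \CC^*)/\mu_{n+k-1}$ is reductive. By \eqref{eq:action3} with $g = \id$ and $t = 1$, the action of $N$ on ${\rm RS}^{\rm s}_{n,k}$ is by translations in the $h$-variable along the image of the linear map $\nu_f\colon u \longmapsto -(u \otimes f) \circ \mu_*$; because $f$ is injective and $\mu$ is surjective, $\nu_f$ is injective, so the $N$-action is fibrewise-linear and free. On each fibre of the projection $(f,h) \longmapsto f$ the quotient is the linear quotient of the $h$-space by the subspace $\operatorname{im}(\nu_f)$, which varies linearly in $f$; hence ${\rm RS}^{\rm s}_{n,k}/N$ exists as a smooth quasi-projective scheme (an open subscheme of a vector bundle over the $f$-space). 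The residual reductive group $G/N$ acts on ${\rm RS}^{\rm s}_{n,k}/N$, and Proposition \ref{prop:FreeAction} (with $\varphi = \id$) guarantees this action has trivial stabilisers on the stable locus. Pulling back the principal $R$-bundle $\widetilde{\rm SI}_k \lra {\rm MI}_{\bbP^{2n+1}}(k)$ along the induced $G/N$-invariant morphism ${\rm RS}^{\rm s}_{n,k}/N \lra {\rm MI}_{\bbP^{2n+1}}(k)$ and applying a standard descent argument for two commuting free actions identifies
\[
  {\rm RS}^{\rm s}_{n,k}/G \;=\; \bigl({\rm RS}^{\rm s}_{n,k}/N\bigr)/(G/N)
\]
with a locally closed smooth subvariety of ${\rm MI}_{\bbP^{2n+1}}(k)$; this yields both the quotient variety and the morphism $\iota$ as the tautological inclusion.

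Generic injectivity of $\iota$ follows at once from Proposition \ref{boundsections-sharp}: on the dense open locus of RS-instanton bundles whose $n$-space $L$ of maximal instability is unique, two pairs $(f,h),(f',h') \in {\rm RS}^{\rm s}_{n,k}$ with isomorphic cohomology bundles must satisfy $L_f = L_{f'}$, whence Proposition \ref{prop:FreeAction} places them in the same $G$-orbit. The main technical obstacle is verifying that the $N$-quotient exists as a \emph{scheme} (not merely an algebraic space) and that the subsequent two-step descent is compatible with the principal $R$-bundle structure on $\widetilde{\rm SI}_k$; the explicit affine-linear form of the $N$-action on the $h$-variable makes the first point tractable, while the second reduces to a fibre-product manipulation exploiting the commutation and freeness of the $G/N$- and $R$-actions.
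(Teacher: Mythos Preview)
Your two-step strategy---first quotient by the unipotent radical $N = (S^{2n+2k-2}U)^*$, then by the reductive quotient $G/N$---is a reasonable idea and genuinely different from the paper's route, but the second step has a gap, and one of your conclusions is too strong.

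The quotient by $N$ is fine: $N$ acts freely by affine translations in the $h$-variable, so ${\rm RS}^{\rm s}_{n,k}/N$ exists as a smooth scheme. The problem is your construction of $({\rm RS}^{\rm s}_{n,k}/N)/(G/N)$. The ``standard descent argument for two commuting free actions'' does not deliver this quotient: if you form $Y = ({\rm RS}^{\rm s}_{n,k}/N) \times_{{\rm MI}} \widetilde{\rm SI}_k$ with its commuting free $(G/N)$- and $R$-actions, then computing $Y/(G/N \times R)$ via $Y/R = {\rm RS}^{\rm s}_{n,k}/N$ is circular, while going via $Y/(G/N)$ presupposes that this quotient exists as a quasi-projective scheme---which is exactly the point at issue. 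Freeness of the $(G/N)$-action together with a $(G/N)$-invariant morphism to ${\rm MI}$ is not enough; you would still need properness of the action, or a linearised ample line bundle making every point stable, and neither has been checked. Relatedly, your assertion that ${\rm RS}^{\rm s}_{n,k}/G$ embeds as a \emph{locally closed subvariety} of ${\rm MI}_{\bbP^{2n+1}}(k)$ is too strong: by Proposition~\ref{boundsections-sharp} the $n$-space $L$ of maximal instability is unique only for a \emph{general} RS-instanton bundle, so $\iota$ is merely generically injective, not an immersion.

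The paper circumvents both difficulties by a different reduction to a reductive group. Instead of splitting $G$ as $N \rtimes (G/N)$, it uses the injective homomorphism $G \hookrightarrow H := {\rm Sp}\bigl((S^{n+k-1}U)^* \oplus S^{n+k-1}U, J\bigr) \times \GL(V)$ coming from Diagram~\eqref{eq:iso} to form the associated fibre bundle $H \times^G {\rm RS}^{\rm s}_{n,k}$. This is a smooth quasi-projective variety carrying a free $H$-action and an $H$-\emph{equivariant} morphism to $\widehat{\rm SI}_k$; since $H$ is reductive and the variety is normal, one linearises the $H$-action in an ample line bundle and applies Grothendieck's descent along the principal $H$-bundle $\widehat{\rm SI}_k \lra {\rm MI}_{\bbP^{2n+1}}(k)$ to obtain a quasi-projective $Q$ with $H \times^G {\rm RS}^{\rm s}_{n,k} \lra Q$ a principal $H$-bundle, whence $Q = {\rm RS}^{\rm s}_{n,k}/G$. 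The induction-to-$H$ trick is what supplies the missing linearisation input, and it yields $\iota$ as a morphism (not an embedding) to ${\rm MI}_{\bbP^{2n+1}}(k)$. Your derivation of generic injectivity of $\iota$ from Propositions~\ref{boundsections-sharp} and~\ref{prop:FreeAction} is correct and agrees with the paper.
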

%%%
\begin{proof} 
Formulas \eqref{eq:F} and \eqref{eq:H} describe an embedding
\[
\psi\colon {\rm RS}_{n,k} \lra {\rm SM}
\]
of ${\rm RS}_{n,k}$ into the space of all symplectic monads (compare \eqref{eq:SympInst}). We also have a homomorphism
\[
 \alpha\colon G \lra H,\q H:= {\rm Sp}\bigl((S^{n+k-1}U)^*\oplus S^{n+k-1}U, J\bigr)\times \GL(V).
\]
Note that $\alpha$ is injective (see Proposition \ref{prop:FreeAction} and its proof for the details).
\par
The principal $G$-bundle $H\lra H/G$ and the action of $G$ on ${\rm RS}^{\rm s}_{n,k}$ define the associated fiber bundle
\[
 H\mathop{\times}^G {\rm RS}^{\rm s}_{n,k} \lra H/G
\]
with typical fiber ${\rm RS}^{\rm s}_{n,k}$ (see \cite{Serre}, \S 3.2). It is a smooth quasi-projective variety on which the reductive group $H$ acts from the left. 
By Proposition \ref{boundsections-sharp} and Proposition \ref{prop:FreeAction}, we have a generically injective and $H$-equivariant morphism
\[
 a\colon H\mathop{\times}^G {\rm RS}^{\rm s}_{n,k}\lra {\rm SM}.
\]
Since $H\mathop{\times}^G {\rm RS}^{\rm s}_{n,k}$ is normal, the $H$-action on this space can be linearized in some ample line bundle $A$ (\cite{Mu}, Corollary 1.6).
\par
Let
\[
 \widehat{\rm SI}_k\subset {\rm SM}\setminus \{0\}
\]
be the preimage of $\widehat{\rm SI}_k$ under the projection ${\rm SM}\setminus \{0\}\lra \mathbb{P}({\rm SM})$. Remark \ref{rem:QuotPriBund} shows that
\[
 \widehat{\rm SI}_k\lra {\rm MI}_{\mathbb{P}^{2n+1}}(k)
\]
is a principal $H$-bundle. In particular, it is a faithfully flat morphism. The $H$-action on the space $H\mathop{\times}^G {\rm RS}^{\rm s}_{n,k}$ and the linearization of this 
action in $A$ provide the data which enable us to descend the $H$-equivariant morphism
\[
 H\mathop{\times}^G {\rm RS}^{\rm s}_{n,k}\lra {\rm SM}
\]
to a quasi-projective variety $Q$ and a morphism $Q\lra {\rm MI}_{\mathbb{P}^{2n+1}}(k)$. For this, one applies a descent theorem by Grothendieck 
(\cite{SGA}, Expos\'e VIII, Proposition 7.8; \cite{Luet}, Theorem 7, p.\ 138). Here,
\[
 H\mathop{\times}^G {\rm RS}^{\rm s}_{n,k}\lra Q
\]
is also a principal $H$-bundle. In particular, $Q$ is the categorical quotient for ${\rm RS}^{\rm s}_{n,k}$ with respect to the $G$-action.
\end{proof}
%%%
Proposition \ref{prop:BirTypeRSStack} implies the following statements:
%%%
\begin{cor}
\label{cor:RSInstRat}
The coarse moduli space ${\rm RSI}_{\bbP^{2n+1}}(k)$ of stable RS-instanton bundles with charge $k$ over $\bbP^{2n+1}$ is rational.
\end{cor}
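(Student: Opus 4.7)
The plan is to deduce Corollary \ref{cor:RSInstRat} from the birational description of the moduli stack given in the preceding corollary, together with the construction of ${\rm RSI}_{\bbP^{2n+1}}(k)$ as the categorical quotient of ${\rm RS}^{\rm s}_{n,k}$ by $G$ in Proposition \ref{prop:RSModuli}. Since passage to the coarse moduli space respects birational equivalence on a dense open substack, and since taking coarse spaces commutes with products by a scheme, it suffices to compute the coarse spaces of the two stack-level models $\bbA^{N_1} \times B\mu_2$ and $\bbA^{N_2} \times [\End(U)^2/\SL(U)]$, with $N_1 = (4n+2)k + 4n^2 + 2n - 4$ and $N_2 = (4n+2)k + 4n^2 + 2n - 9$.

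In Case (i), when $n$ or $k$ is odd, the coarse space of $B\mu_2$ is a point, so ${\rm RSI}_{\bbP^{2n+1}}(k)$ is birational to $\bbA^{N_1}$ and is therefore rational.

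In Case (ii), when both $n$ and $k$ are even, the problem reduces to rationality of the categorical quotient $\End(U)^2 \catqot \SL(U)$, where $\SL(U) \cong \SL_2(\bbC)$ acts on pairs of $2 \times 2$ matrices by simultaneous conjugation. Since the center $\bbC^* \cdot \id_U$ of $\GL(U)$ acts trivially by conjugation, the $\SL(U)$-orbits coincide with the $\GL(U)$-orbits, so this quotient coincides with $\End(U)^2 \catqot \GL(U)$. By a classical result in the invariant theory of $2 \times 2$ matrices (Sibirskii, Procesi), the invariant ring is freely generated as a polynomial algebra by $\operatorname{tr}(A)$, $\operatorname{tr}(B)$, $\det(A)$, $\det(B)$, and $\operatorname{tr}(AB)$, so $\End(U)^2 \catqot \SL(U) \cong \bbA^5$. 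Consequently ${\rm RSI}_{\bbP^{2n+1}}(k)$ is birational to $\bbA^{N_2 + 5}$, which is rational.

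The only technical point that requires attention is the transfer from a birational equivalence of stacks to one of coarse moduli spaces. This is standard: on a common dense open substack on which the two stacks are isomorphic, the coarse space map is universal, so the induced coarse spaces are isomorphic on dense open subvarieties. The rationality input in Case (ii) is entirely classical, so no new geometric ingredient beyond the stack-level birational computation already established is needed; the hard work has been absorbed into Proposition \ref{prop:BirTypeRSStack} and the preceding corollary.
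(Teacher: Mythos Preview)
Your proof is correct and follows essentially the same route as the paper, which simply asserts that Corollary~\ref{cor:RSInstRat} follows from Proposition~\ref{prop:BirTypeRSStack} without spelling out the details. You go through the intermediate corollary on the moduli stack (itself a consequence of Proposition~\ref{prop:BirTypeRSStack}) and then pass to coarse spaces, making explicit the one nontrivial ingredient the paper leaves implicit: that the coarse space of $[\End(U)^2/\SL(U)]$ is rational, which you justify via the classical description of conjugation invariants for pairs of $2\times 2$ matrices. The paper's reference to Proposition~\ref{prop:BirTypeRSStack} lands on $[\End(U)^2/\GL(U)]$ rather than $[\End(U)^2/\SL(U)]$, but as you note these have the same coarse space since the center acts trivially, so the two arguments are equivalent.
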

%%%
\begin{cor}
There is a Poincar\'{e} family parameterized by some open subscheme of the coarse moduli space ${\rm RSI}_{\bbP^{2n+1}}(k)$ of
stable RS-instanton bundles with charge $k$ over $\bbP^{2n+1}$ if and only if $n$ is odd or $k$ is odd.
\end{cor}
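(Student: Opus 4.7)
The plan is to deduce the statement directly from the previous corollary, which identifies the birational type of the moduli stack of RS-instanton bundles on $\bbP^{2n+1}$ in the two parity cases. Since the coarse moduli space ${\rm RSI}_{\bbP^{2n+1}}(k)$ is constructed as a geometric quotient by $G$ (Proposition \ref{prop:RSModuli}) and the generic stabilizer is $\mu_2$ in both cases, the question of the existence of a Poincar\'{e} family on a dense open subscheme of the coarse moduli space is precisely the question of whether the generic $\mu_2$-gerbe structure of the moduli stack over its coarse space is trivial.

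For the first direction, assume $n$ or $k$ is odd. Then the moduli stack is birational to $\bbA^{(4n+2)k + 4n^2 + 2n - 4} \times B\mu_2$. The factor $B\mu_2$ is the classifying stack of $\mu_2$, which is a \emph{neutral} $\mu_2$-gerbe over its coarse space $\mathrm{Spec}(\bbC)$; it carries a tautological section. Pulling back this section along the projection to $B\mu_2$ produces a Poincar\'{e} family on the dense open subscheme of ${\rm RSI}_{\bbP^{2n+1}}(k)$ corresponding to this birational model.

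For the second direction, assume $n$ and $k$ are both even. Then the moduli stack is birational to $\bbA^{(4n+2)k + 4n^2 + 2n - 9} \times [\End(U)^2/\SL(U)]$. The center $\mu_2 \subseteq \SL(U) = \SL_2(\bbC)$ acts trivially on $\End(U)^2$ by simultaneous conjugation, so the action of $\SL(U)$ factors generically freely through $\PGL_2(\bbC)$. Consequently $[\End(U)^2/\SL(U)]$ is generically a $\mu_2$-gerbe over the coarse quotient $\End(U)^2/\PGL_2(\bbC)$. A Poincar\'{e} family on an open subscheme of the coarse moduli would give a trivialization of this gerbe over a dense open, i.e., would force its Brauer class to vanish. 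To obstruct this, I would observe that the standard representation of $\SL_2(\bbC)$ on $U$ provides a rank-$2$ vector bundle on the gerbe on which $\mu_2$ acts by its nontrivial character; hence the index of the Brauer class divides $2$. Then I would appeal to the classical fact, in the form of \cite{BCD} and \cite{Amitsur}, Theorem 3 (exactly as invoked in the Remark following Theorem \ref{thm:birational}), that the Brauer class associated to conjugation on generic $2 \times 2$ matrix pairs has order exactly $2$; this rules out a Poincar\'{e} family on any dense open.

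The main obstacle is the even case: one must rigorously identify the obstruction to the existence of a Poincar\'{e} family with the standard Brauer class of the $\mu_2$-gerbe $[\End(U)^2/\SL(U)] \to \End(U)^2/\PGL_2(\bbC)$, and then invoke nontriviality of this class. Both ingredients are already used (without full proof) in the parallel remark after Theorem \ref{thm:birational}, so the argument should mirror that discussion, the only input needed being the birational description of the moduli stack from the previous corollary.
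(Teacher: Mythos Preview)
Your proposal is correct and is essentially an elaboration of what the paper leaves implicit: the paper simply states that this corollary (together with Corollary~\ref{cor:RSInstRat}) follows from Proposition~\ref{prop:BirTypeRSStack}, without spelling out the gerbe argument. You have correctly unpacked the mechanism---the moduli stack is generically a $\mu_2$-gerbe over the coarse space, the birational model identifies this gerbe as neutral in the odd case and as the generic $\PGL_2$-gerbe $[\End(U)^2/\SL(U)]$ in the even case, and the latter has nontrivial Brauer class by the classical theory of generic division algebras.

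One small clarification worth making in the even case: the references \cite{BCD} and \cite{Amitsur} in the Remark after Theorem~\ref{thm:birational} are invoked there for the orthogonal gerbe $[\Mat^{\sym}_{2^e\times 2^e}(\bbC)^2/\Orth_{2^e}]$, not directly for $[\End(U)^2/\SL(U)]$. For the latter, the nontriviality of the Brauer class is the statement that the generic degree-$2$ Azumaya algebra is a division algebra, which is elementary (or follows from Amitsur's result in degree $2$); you might phrase the citation accordingly rather than saying ``exactly as invoked'' in that Remark.
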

%%%

%%%%
\end{document}